\documentclass[10pt,a4paper, reqno]{amsart}
\usepackage{amsmath,amsthm,amssymb,mathrsfs,stmaryrd,color}
\usepackage[all]{xy}
\usepackage{url}
\usepackage{slashed}
\usepackage{geometry}

\geometry{margin=1in}

\usepackage[utf8]{inputenc}
\usepackage[T1]{fontenc}

\usepackage{relsize}
\usepackage[bbgreekl]{mathbbol}
\usepackage{amsfonts}

\usepackage{tikz-cd}
\usepackage{enumerate}

\DeclareMathOperator{\K}{K}
\DeclareMathOperator{\THH}{THH}
\DeclareMathOperator{\TF}{TF}
\DeclareMathOperator{\TR}{TR}
\DeclareMathOperator{\TP}{TP}
\DeclareMathOperator{\TC}{TC}
\DeclareMathOperator{\HH}{HH}
\DeclareMathOperator{\HP}{HP}
\DeclareMathOperator{\HC}{HC}

\DeclareMathOperator{\Sp}{Sp}

\DeclareMathOperator{\Cycsp}{CycSp}
\DeclareMathOperator{\TCart}{TCart}

\DeclareMathOperator{\Calgc}{CAlg^{\geq 0}}

\DeclareMathOperator{\spf}{spf}

\DeclareMathOperator{\map}{map}

\DeclareMathOperator{\fib}{fib}

\DeclareMathOperator{\eq}{eq}
\DeclareMathOperator{\B}{B}
\DeclareMathOperator{\Fil}{Fil}
\DeclareMathOperator{\gr}{gr}

\DeclareMathOperator{\F}{F}
\DeclareMathOperator{\V}{V}

\DeclareMathOperator{\W}{W}

\DeclareMathOperator{\wcart}{WCart}

\DeclareMathOperator{\dr}{dR}

\newcommand{\FilM}{\Fil_{\mathcal{M}}}

\newcommand{\grM}{\gr_{\mathcal{M}}}

\newcommand{\N}{\mathbb{N}}
\newcommand{\Z}{\mathbb{Z}}

\newcommand{\Sph}{\mathbb{S}}

\newcommand{\Del}{\mathbb{\Delta}}
\newcommand{\Delc}{\widehat{\Del}}

\newcommand{\Fp}{\mathbb{F}_p}
\newcommand{\Zp}{\mathbb{Z}_p}

\theoremstyle{plain}
\newtheorem{thm}{Theorem}[section]
\newtheorem*{definition*}{Definition}
\newtheorem{lem}[thm]{Lemma}
\newtheorem{prop}[thm]{Proposition}
\newtheorem{cor}[thm]{Corollary}
\newtheorem{prin}[thm]{General Principle}

\theoremstyle{definition}
\newtheorem{defn}[thm]{Definition}
\newtheorem{rmk}[thm]{Remark}
\newtheorem{con}[thm]{Construction}
\newtheorem{ex}[thm]{Example}

\DeclareMathOperator{\ainf}{\mathbb{A}_{\inf}}
\DeclareMathOperator{\einfty}{\mathbb{E}_{\infty}}

\usepackage{hyperref}

\usepackage[
backend=biber,
style=alphabetic,
sorting=nyt
]{biblatex}
\addbibresource{main.bib}

\DeclareMathOperator{\Spa}{Spa}
\DeclareMathOperator{\Res}{R}

\setcounter{tocdepth}{1}

\usepackage{hyperref}
\DeclareMathOperator{\can}{can}
\DeclareMathOperator{\Nm}{Nm}
\DeclareMathOperator{\CycspFr}{\Cycsp^{Fr}}
\DeclareMathOperator{\id}{id}

\usepackage{xcolor}

\DeclareMathOperator{\acrys}{\widehat{\mathbb{A}}_{\mathrm{crys}}}
\DeclareMathOperator{\rlim}{Rlim}

\DeclareMathOperator{\limr}{\underset{\Res}{\lim}}
\DeclareMathOperator{\rlimr}{R\underset{\Res}{\lim}}

\newcommand{\nr}{\mathcal{N}_r}
\newcommand{\ninf}{\mathcal{N}_{\infty}}
\newcommand{\n}{\mathcal{N}}
\newcommand{\HT}{\mathrm{HT}}
\newcommand{\conj}{\mathrm{conj}}
\newcommand{\hod}{\mathrm{Hod}}
\newcommand{\drw}{\mathrm{dRW}}

\usepackage{marginnote}
\usepackage[textsize=small]{todonotes}
\setlength{\marginparwidth}{0.8in}

\title[TR and the $r$-Nygaard filtered prismatic cohomology]{TR and the $r$-Nygaard filtered prismatic cohomology}
\author{Faidon Andriopoulos}
\thanks{
\textit{Email address:} \texttt{fandri@uchicago.edu}}

\begin{document}

\maketitle

\begin{abstract}
    Given an animated ring $S$, we define a filtration on its absolute prismatic cohomology $\nr^{\geq i} \Del_S$, which we call the \emph{$r$-Nygaard filtration} and study some of its main properties using a mixture of algebraic and homotopy theoretic techniques. This filtration is obtained by suitably gluing $r$-copies of the usual Nygaard filtration and corresponds to the $\xi_r$-adic filtration on $\ainf$, in the case that $S$ is a perfectoid ring.
    Using this, we study the motivic filtration of topological restriction homology $\TR^r (S;\Zp)$ and of its $S^1$-homotopy fixed points. We also pursue connections with the theory of topological cyclic homology. Finally we discuss connections with the de Rham--Witt complex, towards a prismatic - de Rham--Witt comparison theorem.
\end{abstract}

\tableofcontents

\section{Introduction}

Prismatic cohomology is a recently introduced $p$-adic cohomology theory due to B. Bhatt and P. Scholze. The underlying theory is developed in their groundbreaking work \cite{BS} and relies on the notion of a \emph{prism} $(A,I)$. Given a $p$-complete ring $S$ over $\overline{A}:=A/I$, one is able to construct its \emph{relative prismatic cohomology} $\Del_{S/A}$, via the use of the prismatic site. This theory has already proved invaluable for tackling several questions in a plethora of mathematical areas. This is a consequence of the fact that it is believed to be the \emph{right} $p$-adic cohomology theory, as it specializes to and refines all other previously known ones: de Rham, Hodge--Tate, crystalline, \' etale, etc.

A first attempt to construct a good universal $p$-adic cohomology theory was made in the initial work of Bhatt--Morrow--Scholze \cite{BMS1}. The authors mainly used techniques from perfectoid geometry, in order to construct cohomological invariants over Fontaine's period ring $\ainf$. These constructions were closely related to the theory of \emph{Breuil--Kisin--Fargues modules} and, via Fargues' theorem \cite{ScholzeWeinstein}, to \emph{mixed characteristic shtukas} with one leg over $\mathrm{spa}\; C^{\flat}$. Thus, this attempt falls under the general scope of Scholze's insight, who suggested that $p$-adic cohomology theories are expected to have shtuka-like properties \cite{ScholzeICM}.

Further algebro-geometric approaches are those of Drinfeld \cite{Prismatization} and Bhatt--Lurie \cite{APC, APC2, APC3}, who use the language of stacks in an essential way, in order to study prismatic invariants. Their viewpoint is based on stacky reformulations of the prismatic site of Bhatt--Scholze, called \emph{prismatization stacks}, which shed new light on geometric phenomena of \emph{absolute prismatic cohomology} and especially to structure associated with it, such as the \emph{Hodge--Tate cohomology}, the \emph{Nygaard filtration}, and the categories of \emph{prismatic crystals/gauges}.

An approach to absolute prismatic cohomology, of different flavour, lies in the second work of Bhatt--Morrow--Scholze \cite{BMS2}, which uses tools from homotopy theory. Interestingly enough, this story is opposite to the stacky one, as shown in the study of the \emph{filtered prismatization stacks} of \cite{Prismatization} and \cite{APC3}. In \cite{BMS2}, the authors constructed and studied \emph{motivic filtrations} of \emph{topological Hochschild homology} $\THH$ and of some related invariants, in the case of $p$-complete rings. In particular, the associated graded pieces of these motivic filtrations can be expressed in terms of structure related to prismatic cohomology. The inspiration for this work came from the theory of motives, in which the algebraic $\K$-theory of a well-behaved scheme carries a motivic filtration, whose graded pieces are identified with \textit{motivic cohomology}. For a subsequent approach to this, the reader is directed to \cite{elmanto2023motivic, bouis2024motivic}. In fact, one of the many applications of \cite{BMS2}, was the identification of syntomic cohomology with $p$-adic \'etale motivic cohomology, as a consequence of the approximation of algebraic $\K$-theory by topological cyclic homology, via the trace map \cite{CMM}.

The classical approach to topological cyclic homology $\TC$ was paved via the use of another invariant of $\THH$, called \emph{topological restriction homology} $\TR$. This is a rich invariant which has a deep connection to algebraic $\K$-theory, but is also closely related to the Witt vector functor. In particular, it is equipped with a Frobenius endomorphism $\F : \TR \to \TR$, which gives rise to $\TC$, as the following homotopy fibre:
\begin{equation*}
    \TC \simeq \fib \Big( 1 - \F: \TR \longrightarrow \TR \Big)
\end{equation*}
In fact, there is more to say about the relation between $\TR$ and algebraic $\K$-theory. On one hand, $\TR$ rises through the so-called \emph{curves in $\K$-theory} \cite{hesselholt1996p, mccandless2021curves, betley2005cyclotomic}, while on the other hand, it is related via Goodwillie calculus to a closely related invariant, called cyclic $\K$-theory $\K^{\mathrm{cyc}}$ \cite{lindenstrauss2012taylor, blumberg2016, hesselholt2019arbeitsgemeinschaft, nikolaustopological, Nikolaustalk}.

Attempting to see how these different approaches to prismatic cohomology and motivic phenomena in the $p$-adic world relate to each other, one is led to several natural questions:
\begin{itemize}
    \item What is the role of the Witt vector functor in prismatic cohomology, given their importance in the theory of $\delta$-rings and the approach via prismatization?
    \vspace{0.05in}
    \item What is the motivic filtration of $\TR^r$ and in terms of what structure, related to prismatic cohomology, can we interpret its graded pieces with?
    \vspace{0.05in}
    \item How does the theory of the de Rham--Witt complex fit in the prismatic context?
    \vspace{0.05in}
    \item The prismatic theory stands as a variant of shtukas with one leg over $\Spa\, C^{\flat}$, under their correspondence with Breuil--Kisin--Fargues modules. How could one capture the information of shtukas with more than one leg? In general, we would like to think of prismatic cohomology as illuminating an automorphic viewpoint to cohomology theories.
\end{itemize}

In this work, using both algebraic and homotopy theoretic techniques, we study two problems, which are interlinked to each other. On the one hand, we study the motivic filtration of topological restriction homology $\TR^r (S;\Zp)$ and of its $S^1$-homotopy fixed points $\TR^r (S;\Zp)^{hS^1}$. On the other hand, we introduce a filtration $\nr^{\geq \bullet} \Del_S$, for every $1 \leq r \leq \infty$, by suitably gluing $r$ copies of the Nygaard filtration. It follows that we can express the graded pieces of the motivic filtrations of $\TR^r (S;\Zp)^{hS^1}$ and $\TR^r (S;\Zp)$, through $\nr^{\geq \bullet} \Del_S$. We also discuss connections with the theory of the de Rham--Witt complex, which we hope to illuminate further, in future work.

Our main theorems read as follows:

\begin{thm}[Absolute Prismatic Cohomology] \label{thm1}
    Let $S$ be an animated ring. For $1 \leq r \leq \infty$, we introduce, via the following iterated pullback construction, a filtration on its absolute prismatic cohomology $\Del_S$, which we call the \emph{$r$-Nygaard filtration}:
    \begin{equation} \label{iterated pullback construction}
        \nr^{\geq i} \Del_S \{ i\} := \n^{\geq i} \Del_S \{ i\} \times_{\Del_S \{ i\}} \dots \times_{\Del_S \{ i\}} \n^{\geq i} \Del_S \{ i\}
    \end{equation}
    In this iterated pullback construction, the maps on the left are the canonical inclusion maps $\iota : \n^{\geq i} \Del_S \{ i\} \hookrightarrow \Del_S \{ i\}$, while the maps on the right correspond to the divided prismatic Frobenius $\varphi_i : \n^{\geq i} \Del_S \to \Del_S$.

    The $r$-th iteration of prismatic Frobenius takes the $i$-th filtered piece of the $r$-Nygaard filtration $\nr^{\geq i} \Del_S$ to $I_r^i \Del_S$. Mapping from the iterated pullback construction to the left, first to $\n^{\geq i}\Del_S \{ i\}$ via the natural projection and then to $\Del_S \{ i\}$ via $\varphi_i$, gives rise to a divided version of the $r$-th iteration of prismatic Frobenius, which we call the \emph{$r$-divided prismatic Frobenius} map. Mapping to the right, first to $\n^{\geq i} \Del_S \{ i\}$ via projection and then to $\Del_S \{ i\}$ via the canonical inclusion, gives an inclusion map. We denote these two maps as follows:
    \begin{equation*}
        \begin{tikzcd}
            \varphi_{r, i} : \nr^{\geq i} \Del_S \{ i\} \arrow[r] & \Del_S \{ i\} \qquad  \iota : \nr^{\geq i} \Del_S \{ i\} \arrow[r, hook] & \Del_S \{ i\}
        \end{tikzcd}
    \end{equation*}
    
    For $1 \leq r < \infty$, we denote by $\Del_S^{\HT,r}$ the \emph{$r$-Hodge--Tate cohomology}, which is obtained by the vanishing of $I_r$. It is the target of the graded couterpart of $\gr \varphi_{r,i}$, from the $i$-th associated graded piece of the $r$-Nygaard filtration $\nr^i \Del_S \{ i\}$, and therefore fits in the following commutative square.
    \begin{equation} \label{r-divided Frobenius commutative square}
        \begin{tikzcd}[column sep=huge]
            \nr^{\geq i} \Del_S \{ i\} \arrow[r, "\varphi_{r, i}"] \arrow[d] & \Del_S \{ i\} \arrow[d] \\
            \nr^i \Del_S \{ i\} \arrow[r, "\gr \varphi_{r, i}"] & \Del_S^{\HT, r} \{ i\}
        \end{tikzcd}
    \end{equation}
    
    The $r$-Nygaard filtered prismatic cohomology is equipped with natural Restriction, Frobenius, and Verschiebung maps, which also pass to the the graded pieces of the filtration, as indicated in the following diagrams:
    \begin{equation*}
        \begin{tikzcd}
            \n_{r+1}^{\geq i} \Del_S \{ i\} \arrow[r, "\Res"] \arrow[d] & \nr^{\geq i} \Del_S \{ i\} \arrow[d] & \n_{r+1}^{\geq i} \Del_S \{ i\} \arrow[r, "\F"] \arrow[d] & \nr^{\geq i} \Del_S \{ i\} \arrow[d] & \nr^{\geq i} \Del_S \{ i\} \arrow[r, "\V"] \arrow[d] & \n_{r+1}^{\geq i} \Del_S \{ i\} \arrow[d] \\
            \n_{r+1}^i \Del_S \{ i\} \arrow[r, "\Res"] & \nr^i \Del_S \{ i\} & \n_{r+1}^i \Del_S \{ i\} \arrow[r, "\F"] & \nr^i \Del_S \{ i\} & \nr^i \Del_S \{ i\} \arrow[r, "\V"] & \n_{r+1}^i \Del_S \{ i\}
        \end{tikzcd}
    \end{equation*}
    The Restriction and Frobenius maps interact with the $r$-divided prismatic Frobenius as shown in the following diagrams, the left of which is a pullback reformulation of the initial iterated pullback construction \ref{iterated pullback construction}:
    \begin{equation*}
        \begin{tikzcd}[column sep=huge]
            \n_{r+1}^{\geq i} \Del_S \{ i\} \arrow[r, "\Res"] \arrow[d] & \nr^{\geq i} \Del_S \{ i\} \arrow[d, "\varphi_{r,i}"] & \n_{r+1}^{\geq i} \Del_S \{ i\} \arrow[r, "\F"] \arrow[d] \arrow[dd, bend right=60, "\varphi_{r+1, i}"'] & \nr^{\geq i} \Del_S \{ i\} \arrow[d] \arrow[dd, bend left=60, "\varphi_{r,i}"] \\
            \n^{\geq i} \Del_S \{ i\} \arrow[r, "\iota"] & \Del_S \{ i\} & \n^{\geq i} \Del_S \{ i\} \arrow[r] \arrow[d] & \n^{\geq i} \Del_S \{ i\} \arrow[d] \\
            & & \Del_S \{ i\} \arrow[r] & \Del_S \{ i\}
        \end{tikzcd}
    \end{equation*}
    
    Taking the limit with respect to the Restriction maps, we obtain Frobenius endofunctors for the case $r=\infty$:
    \begin{equation*}
        \begin{tikzcd}[column sep=huge]
            \ninf^{\geq i} \Del_S \{ i\} \arrow[r, "\F"] \arrow[d] & \ninf^{\geq i} \Del_S \{ i\} \arrow[d] \\
            \ninf^i \Del_S \{ i\} \arrow[r, "\F"] & \ninf^{\geq i} \Del_S \{ i\}
        \end{tikzcd}
    \end{equation*}
    we obtain a Frobenius endofunctor
    \begin{equation*}
        \F : \ninf^{\geq i} \Del_S \{ i\} \longrightarrow \ninf^{\geq i} \Del_S \{ i\}
    \end{equation*}

    By passing to the $0$-th associated graded piece of the $r$-Nygaard filtered prismatic cohomology, one is able to recover the $r$-truncated Witt vectors:
    \begin{equation} \label{Witt vectors via gr0 of the r-Nygaard filtration}
        \nr^0 \Del_S \{ i\} \simeq \W_r (S)
    \end{equation}    
    Notice that this equivalence respects the natural symmetries of Restriction, Frobenius, and Verschiebung, which we explained above.

    In certain cases, when the input is sufficiently nice, it happens that we can have a simpler desription of the $r$-Nygaard filtration. More specifically, if $S$ is a finitely generated polynomial algebra over $\Z$, then the $r$-Nygaard filtration on its completed prismatic cohomology $\nr^{\geq \bullet} \Delc_S \{ \bullet\}$ can be described as the connective cover, with respect to the Beilinson t-structure, of the $I_r$-adic filtration $I_r^{\bullet} \Delc_S \{ \bullet\}$, under the $r$-th iteration of filtered prismatic Frobenius.

    In particular, notice that in the case $S$ is a perfectoid ring, the $r$-Nygaard filtration on its prismatic cohomology is just the $\xi_r$-adic filtration on $\ainf (S)$. The natural symmetries $\Res$, $\F$, $\V$ interact well with the Fontaine-style maps $\vartheta_r$, $\widetilde{\vartheta}_r$, as explained in \cite[Sec. 3]{BMS1}.
\end{thm}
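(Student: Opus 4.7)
The strategy is to build up the $r$-Nygaard filtration and all its structure by purely formal manipulations of the iterated pullback diagram~(\ref{iterated pullback construction}), reducing the nontrivial content to the $r=1$ case (the usual Nygaard filtration and its divided Frobenius from Bhatt--Scholze) together with standard facts about Witt vectors. First I would unpack the iterated pullback as a finite limit indexed by the natural zig-zag diagram whose $r$ copies of $\n^{\geq i}\Del_S\{i\}$ are alternately mapped into $\Del_S\{i\}$ via $\iota$ and $\varphi_i$. The universal property of limits then immediately supplies both the map $\varphi_{r,i}$ and the inclusion $\iota$ from $\nr^{\geq i}\Del_S\{i\}$ to $\Del_S\{i\}$, taking the leftmost, respectively rightmost, branch of the zig-zag. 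The assertion that the $r$-fold iterate of Frobenius lands in $I_r^i\Del_S$ will follow by induction on $r$ from the case $r=1$, where $\varphi_1\circ\iota$ is known to be multiplication by a generator of $I^i$ up to a unit; the $r$-Hodge--Tate cohomology $\Del_S^{\HT,r}$ and the commutative square~(\ref{r-divided Frobenius commutative square}) are then obtained by passing to the $i$-th associated graded piece, which is computed levelwise in the zig-zag.

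Next I would construct the Restriction, Frobenius and Verschiebung maps from the iterated pullback description. Restriction is the projection of an $(r{+}1)$-fold zig-zag limit onto an $r$-fold one by forgetting the extra factor on one end, and Frobenius is the analogous projection that forgets the factor on the other end; the difference between the two is encoded in whether the dropped factor was attached via $\iota$ or via $\varphi_i$. Verschiebung arises from the universal property, by promoting an object of $\nr^{\geq i}\Del_S\{i\}$ to one of $\n_{r+1}^{\geq i}\Del_S\{i\}$ through the insertion of a zero in the new extremal copy together with the obvious compatibility. All the compatibility squares with graded pieces, as well as the pullback reformulation of the iterated construction stated in the theorem, then reduce to routine diagram chases in the zig-zag. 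The $r=\infty$ case is handled by passing to the cofiltered limit along Restriction; compatibility of the finite-level Frobenius maps with Restriction (established above) makes $\F$ descend to the limit, and the same argument gives the Frobenius square at infinity.

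The identification $\nr^0\Del_S\simeq\W_r(S)$ is the step I expect to be the main obstacle, since it requires matching the abstract iterated pullback with an independently defined algebraic construction. My plan is to begin with the case $r=1$, where $\n^0\Del_S\simeq S\simeq\W_1(S)$ by the definition of the Nygaard filtration and the identification of its zeroth piece with $S$. For general $r$, unwinding the iterated pullback at filtration level zero reduces to a zig-zag limit of copies of $S$ alternately connected by the identity and by the Frobenius lift coming from the $\delta$-structure on $\Del_S$; matching this zig-zag with $\W_r(S)$ should follow from the universal property of the $r$-truncated Witt vectors in the category of $\delta$-rings, and naturality with respect to $\Res$, $\F$, $\V$ will then be automatic once the same structure is unwound on the Witt vector side.

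The remaining special cases are then direct. For $S$ a finitely generated polynomial $\Z$-algebra, I would invoke the Beilinson t-structure description of the Nygaard filtration from~\cite{BMS2}; since connective covers in that t-structure are compatible with finite limits, taking the iterated pullback of connective covers of $I^{\bullet}\Delc_S\{\bullet\}$ computes $\nr^{\geq\bullet}\Delc_S\{\bullet\}$. For $S$ perfectoid, using $\Del_S\simeq\ainf(S)$, $\n^{\geq i}\Del_S\simeq\xi^i\ainf(S)$ and the explicit Frobenius action on $\xi$, the iterated pullback collapses to the $\xi_r^i$-adic filtration, and the interactions of $\Res$, $\F$, $\V$ with the Fontaine-style maps $\vartheta_r$ and $\widetilde{\vartheta}_r$ then follow from~\cite[Sec.~3]{BMS1}.
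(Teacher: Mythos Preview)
Your treatment of the structural parts---the iterated pullback, the maps $\varphi_{r,i}$ and $\iota$, the Restriction/Frobenius/Verschiebung operators obtained by dropping or adding extremal factors, the $r=\infty$ limit, the perfectoid computation, and the Beilinson $t$-structure description for polynomial algebras---matches the paper's approach in Section~5 and is fine as written.

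The gap is in the Witt vector identification $\nr^0 \Del_S \simeq \W_r(S)$. The paper does \emph{not} obtain this algebraically from the zig-zag: it invokes the homotopy-theoretic machinery, identifying $\pi_{2i}\TR^r(S;\Zp)$ with $\nr^i\Delc_S\{i\}$ (Theorem~\ref{thm3}) and then appealing to the Hesselholt--Madsen theorem $\pi_0\TR^r(S)\simeq\W_r(\pi_0 S)$. The paper explicitly flags this as the ``non-trivial relation with the Witt vectors'' in the Proof Outline, and in Section~5 says plainly that ``the homotopy-theoretic machinery gives us a connection with the Witt vector functor.''

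Your proposed algebraic route does not work as stated, for two reasons. First, $\nr^0\Del_S$ is the cofiber $\nr^{\geq 0}\Del_S/\nr^{\geq 1}\Del_S$ of two iterated pullbacks; cofibers do not commute with limits, so you cannot simply replace each factor $\n^{\geq i}\Del_S$ by its graded piece $\n^0\Del_S\simeq S$ and conclude that $\nr^0\Del_S$ is an iterated pullback of copies of $S$. (Compare with the paper's iterated pullback formula for $\TR^r$, whose factors are $\THH^{hC_{p^k}}$ and $\THH^{tC_{p^k}}$ for varying $k$, not repeated copies of $\THH$.) Second, even if such a description held, $\W_r(S)$ is not in general an iterated fiber product of copies of $S$ along Frobenius---that is only available when Frobenius on $S/p$ is surjective (the semiperfect case). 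The universal property of $\W_r$ in $\delta$-rings is as a cofree object, not as such a limit. So this step genuinely requires the input from $\TR^r$, as the paper does.
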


\begin{rmk}
    Note that in this article we call \emph{prismatic Frobenius}, denoted by $\varphi$, the map that is related to the $\delta$-structure/lift of Frobenius structure on prismatic cohomology. On the other hand, we simply call \emph{Frobenius}, denoted by $\F$, the associated Witt vector Frobenius and/or the map that arises in the study of the $r$-Nygaard filtration on prismatic cohomology. Of course, the two are related, as explained in the theorem above. A direct relation is obtained in the case that the input ring $R_0$ is perfectoid, where we have that $\varphi = \lim_r \F$, coming from the equivalence $\ainf (R_0) \simeq \lim_{\F} \W_r (R_0)$.

    In a similar fashion, we use the term \emph{higher Frobenius} for the map $\THH \to \THH^{tC_p}$ introduced in \cite{NS} (or the one induced by taking $S^1$-homotopy fixed points: $\TC^- \to \TP$), while we reserve the term \emph{Frobenius}, for the related structure of $\TR^r$.
\end{rmk}

Given the relation we obtain between prismatic cohomology $\Del_S$ and the Witt vectors $\W_r (S)$, together with the prismatic-de Rham comparison theorem, it would be natural to ask whether one could lift the latter, to obtain possible connections with the $r$-truncated absolute de Rham--Witt complex $\drw_{r, S}$. Assuming that this is possible, it would imply the existence of a commutative diagram, analogous to the one found in \cite[Sec. 1.8]{APC}, involving the absolute de Rham--Witt complex:
\begin{equation*} \label{Bhatt-Lurie thread with the de Rham--Witt complex}
    \begin{tikzcd}[column sep=huge]
        \Fil_{\hod}^i \drw_{r, S} \arrow[d] & \nr^{\geq i} \Del_S \{ i\} \arrow[l] \arrow[r, "\varphi_{r, i}"] \arrow[d] & \Del_S \{ i\} \arrow[d] \\
        \gr_{\hod}^i \drw_{r, S} \arrow[d, "\simeq"] & \nr^i \Del_S \{ i\} \arrow[l] \arrow[r, "\gr \varphi_{r, i}"] \arrow[d] & \Del_S^{\HT, r} \{ i\} \arrow[d] \\
        LW_r\widehat{\Omega}_S^i [-i] & ? \arrow[l] \arrow[r] & ?
    \end{tikzcd}
\end{equation*}

The questionmarks in the two spots above should be filled by some object that behaves in a similar way with the diffracted Hodge complex. In order for such a construction to be possible, one should consider geometric objects such as an $r$-Hodge--Tate stack $\Sigma^{\HT, r}$ (which we briefly do in this present article) and versions of a Sen operator/diffracted Hodge--Witt complex (which we hope to address in future work).

In order to be able to understand this setup regarding the absolute prismatic cohomology, some understanding of the relative version should be at hand. We discuss this in the following theorem:

\begin{thm}[Relative Prismatic Cohomology] \label{thm2}
    Let $(A,I)$ be a bounded prism and consider a $p$-complete, animated ring $S$, over $\overline{A}$. The structure that we described for the case of absolute prismatic cohomology passes to the case of relative prismatic cohomology, as well. Additionally, for $1 \leq r < \infty$, the $r$-Hodge--Tate cohomology is equipped with a multiplicative, increasing, exhaustive \emph{conjugate filtration} $\Fil_i^{\conj} \Del_{S/ A}^{\HT, r}$, whose $i$-th filtered term fits in the following commutative square:
    \begin{equation*} \label{relative r-divided Frobenius square and the conjugate filtration}
        \begin{tikzcd}[column sep=huge]
            \nr^{\geq i} (\varphi_A^r)^* \Del_{S/A} \{ i\} \arrow[rr] \arrow[d] & & \Del_{S/A} \{ i\} \arrow[d] \\
            \nr^i (\varphi_A^r)^* \Del_{S/A} \{ i\} \arrow[rr] \arrow[rd, bend right=15, "\simeq" description] & & \Del_{S/A}^{\HT, r} \{ i\} \\
            & \Fil_i^{\conj} \Del_{S/A}^{\HT, r} \{ i\} \arrow[ru, hook, bend right=15]
        \end{tikzcd}
    \end{equation*}

    If $S$ is $p$-completely smooth over $\overline{A}$, then the $r$-th iteration of the relative prismatic Frobenius on $\Del_{S/A}$ factors isomorphically through the d\'ecalage with respect to $I_r$:
    \begin{equation*}
        \begin{tikzcd}[column sep=huge]
            (\varphi_A^r)^* \Del_{S/A} \arrow[r, "\simeq"] & L \eta_{I_r} \Del_{S/A} \arrow[r] & \Del_{S/A}
        \end{tikzcd}
    \end{equation*}
    In particular, the relative $r$-Nygaard filtration $\nr^{\geq i} (\varphi_A^r)^* \Del_{S/A} \{ i\}$ can be described as the connective cover, with respect to the Beilinson t-structure, of the relative $I_r$-adic filtration $I_r^{\bullet} \Del_{S/A} \{ \bullet\}$, under the $r$-th iteration of the relative prismatic Frobenius. It follows that the conjugate filtration on the relative $r$-Hodge--Tate cohomology identifies with the Postnikov one:
    \begin{equation}
        \begin{tikzcd}
            \nr^i (\varphi^r)^* \Del_{S/A} \arrow[r, "\simeq"] & \Fil_i^{\conj} \Del_{S/A}^{\HT, r} \simeq \tau^{\leq i} \Del_{S/A}^{\HT, r}
        \end{tikzcd}
    \end{equation}

    If we further assume that $(A,I)$ is a perfect prism, then the main result of \cite{molokov2020prismatic} ensures that the $i$-th associated graded pieces of the conjugate filtration can be described in terms of the $p$-complete, relative, $i$-th de Rham--Witt forms of Langer--Zink:
    \begin{equation}
        \gr_i^{\conj} \Del_{S/A}^{\HT, r} \{ i\} \simeq LW_r\Omega_{S/ \overline{A}}^{i, \mathrm{cont}} [-i]
    \end{equation}
    
    In this case, it follows that we can rephrase part of the above in terms of a relative de Rham--Witt comparison isomorphism, between the $r$-Nygaard filtered relative prismatic cohomology and the Hodge filtered relative $r$-truncated de Rham--Witt complex of Langer--Zink:
    \begin{equation}
        A/I_r \widehat{\otimes}_A^{\mathbb{L}} (\varphi_A^r)^* \Del_{S/A} \{ i\} \simeq \drw_{r, S/ \overline{A}}
    \end{equation}
\end{thm}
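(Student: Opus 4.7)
The plan is to bootstrap from Theorem \ref{thm1}. The iterated pullback construction defining $\nr^{\geq i} \Del_{S/A} \{i\}$, together with its $\Res$, $\F$, and $\V$ structure, is essentially formal once the relative Nygaard filtration and the relative divided Frobenius $\varphi_i : \n^{\geq i} \Del_{S/A}\{i\} \to \Del_{S/A}\{i\}$ are in hand. I would transcribe the corresponding absolute arguments, replacing $\Del_S$ by $\Del_{S/A}$ throughout and noting that all of the gluing diagrams are constructed from maps of relative prismatic cohomology which are already available by the work of Bhatt--Scholze. The conjugate filtration $\Fil_i^{\conj} \Del_{S/A}^{\HT,r}$ I would then define by declaring the triangle with $\gr \varphi_{r,i}$ on the $i$-th associated graded piece and verify multiplicativity, exhaustiveness, and the increasing property by induction on $r$: the case $r=1$ is the classical conjugate filtration from \cite{BS}, and the pullback description of $\nr^{\geq i}$ reduces the general case to a composition of $r$ copies of this filtration glued along $\varphi$.

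The main technical step is the décalage factorization $(\varphi_A^r)^* \Del_{S/A} \simeq L\eta_{I_r} \Del_{S/A}$ in the $p$-completely smooth case. I would argue by induction on $r$. The base $r=1$ is exactly the Bhatt--Scholze theorem giving $\varphi^* \Del_{S/A} \simeq L\eta_I \Del_{S/A}$. For the inductive step, one uses that $I_{r+1}$ factors as $I \cdot \varphi(I_r)$, together with the multiplicativity relation $L\eta_{JK} \simeq L\eta_J \circ L\eta_K$ on sufficiently nice (derived $p$-complete, Tor-amplitude-bounded) complexes. Applying $\varphi_A$-pullback to the inductive hypothesis and composing with the $r=1$ case produces the desired factorization. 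I expect the principal obstacle to be the commutation of $L\eta$ with Frobenius pullback and the bookkeeping for the Koszul-regularity of $I_r$, since the iterated ideals $\varphi^k(I)$ need not behave as nicely as $I$ itself; one may be forced to verify the identification first after reduction modulo $I_r$ (i.e.\ on the Hodge--Tate side) and then lift by $I_r$-completeness.

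Once the décalage identification is established, the Beilinson-connective-cover description of $\nr^{\geq \bullet}$ is formal: by the general dictionary of \cite{BMS2}, the Beilinson t-structure on the $I_r$-adically filtered complex $I_r^{\bullet} \Del_{S/A}\{\bullet\}$ has connective cover that matches precisely the iterated pullback defining $\nr^{\geq i}$ under the décalage identification, because $L\eta$ converts the $I_r$-adic filtration into a Postnikov-type filtration. Taking associated gradeds then gives $\gr^i \Fil^{\conj} \Del_{S/A}^{\HT,r} \simeq \tau^{\leq i}/\tau^{\leq i-1} \Del_{S/A}^{\HT,r}$ by the standard computation of associated gradeds of the Beilinson filtration, and multiplicativity of $\Fil^{\conj}$ follows from the fact that Postnikov filtrations are multiplicative for the derived tensor product.

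Finally, in the perfect prism case I would invoke \cite{molokov2020prismatic} to identify the $i$-th graded piece of the Postnikov/conjugate filtration with the $p$-completed Langer--Zink form $LW_r\Omega_{S/\overline{A}}^{i,\mathrm{cont}}[-i]$. The resulting conjugate-filtered description of $\Del^{\HT,r}_{S/A}$ then assembles, through the gluing along Frobenius supplied by the pullback definition of $\nr^{\geq i}$, into the claimed relative de Rham--Witt comparison $A/I_r \widehat{\otimes}^{\mathbb{L}}_A (\varphi_A^r)^* \Del_{S/A}\{i\} \simeq \drw_{r,S/\overline{A}}$, by matching the Hodge filtration on the right-hand side with the image of $\nr^{\geq \bullet}$ after reduction modulo $I_r$ and comparing graded pieces via the Langer--Zink identification.
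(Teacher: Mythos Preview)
Your proposal is correct and follows essentially the same route as the paper. The paper likewise reduces the d\'ecalage factorization to the $r=1$ case via the multiplicativity $L\eta_J \circ L\eta_K \simeq L\eta_{JK}$ (citing \cite[Lem.~6.11]{BMS1}) together with the tensor-product ``recurring identification'' of $\nr^{\geq i}$, and then reads off the conjugate filtration as the Postnikov filtration from the Beilinson $t$-structure formalism; the obstacle you anticipate regarding regularity of $I_r$ is handled in the paper by first reducing to bounded \emph{transversal} prisms, following \cite[\S\S3--4]{APC}, which is the one step you should add explicitly.
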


Generalizing from the case of the Nygaard filtration $(r=1)$, to the case of the $r$-Nygaard filtration $(r \geq 1)$, we are inclined to formulate a general principle, along the following lines:
\begin{prin} \label{General principle}
    For every statement in the prismatic theory, which is true regarding the tuple:
    \begin{equation*}
        \Big(\n^{\geq \bullet} \Del, I^{\bullet}, \varphi, \Fil_{\hod}^{\bullet} \dr, \FilM^{\bullet} \TC^- \Big)
    \end{equation*}
    there should be a roughly analogous one for the tuple:
    \begin{equation*}
        \Big(\nr^{\geq \bullet} \Del, I_r^{\bullet}, \varphi^r, \Fil_{\hod}^{\bullet} \drw_r, \FilM^{\bullet} (\TR^r)^{hS^1} \Big)
    \end{equation*}
\end{prin}

Moving to the realm of homotopy theory, we know that $\big(\n^{\geq \bullet} \Del, I^{\bullet}, \varphi, \Fil_{\hod}^{\bullet} \dr \big)$ arises through the study of topological Hochschild homology $\THH$ and its $S^1$-homotopy fixed points, as shown in the work of Bhatt--Morrow--Scholze \cite{BMS2}. In analogy, one can read the data of the graded/filtered pieces of the $r$-Nygaard filtration by studying the motivic filtration of topological restriction homology $\TR^r$ and of its $S^1$-fixed points, respectively. The theorem reads as follows:

\begin{thm} [The motivic filtration of $\TR^r$] \label{thm3}
    Let $S$ be a quasisyntomic ring. The following hold:
    \begin{enumerate}[1)]
    \item For $1 \leq r \leq \infty$, the invariants $\TR^r (S;\Zp)^{hS^1} \to \TR^r (S;\Zp)$ are equipped with complete, exhaustive, descending, multiplicative, $\Z$-indexed \emph{motivic filtrations}:
    \begin{equation*}
        \FilM^{\geq \bullet} \TR^r (S;\Zp)^{hS^1} \longrightarrow \FilM^{\geq \bullet} \TR^r (S;\Zp)
    \end{equation*}
    which are the quasisyntomic sheafifications of their respective double speed Postnikov filtrations. Passing to their associated graded pieces, these can be identified with:
    \begin{equation*}
        \begin{cases}
            \grM^i \TR^r (S;\Zp)^{hS^1} \simeq R\Gamma_{syn} \Big(S, \tau_{[2i-1,2i]} \TR^r (-;\Zp)^{hS^1}\Big) \\[5pt]
        \grM^i \TR^r (S;\Zp) \simeq R\Gamma_{syn} \Big(S, \tau_{[2i-1,2i]} \TR^r (-;\Zp)\Big)
        \end{cases}
    \end{equation*}
    by applying quasisyntomic descent from the quasiregular-semiperfectoid case, in which both are identified with two-term complexes.
    \vspace{0.05in}
    
    \item Let us denote by $\grM^{i, \mathrm{odd}}$ the corresponding quasisyntomic sheafification of odd homotopy groups $\pi_{2i-1}$ and by $\grM^{i,\mathrm{even}}$ the corresponding quasisyntomic sheafification of even homotopy groups $\pi_{2i}$, of either $\TR^r (S;\Zp)^{hS^1}$ or $\TR^r (S;\Zp)$. Then $\grM^{i,\mathrm{even}}$ is the $0^{\text{th}}$ cohomology group of the two-term complex, while $\grM^{i, \mathrm{odd}}$ is the $1^{\text{st}}$ cohomology group. For these, the following identifications hold:

    For $1 \leq r < \infty$, the even parts can be expressed in terms of the $r$-Nygaard filtration on Nygaard completed prismatic cohomology $\Delc_S$:
    \begin{equation*}
        \begin{cases}
            \grM^{i,\mathrm{even}} \TR^r (S;\Zp)^{hS^1} \simeq \nr^{\geq i} \Delc_S \{ i\} [2i] \\[5pt]
            \grM^{i,\mathrm{even}} \TR^r (S;\Zp) \simeq \nr^i \Delc_S \{ i\} [2i]
        \end{cases}
    \end{equation*}
    On the other hand, the odd parts $\grM^{i,\mathrm{odd}} \TR^r (S;\Zp)^{hS^1}$ and $\grM^{i,\mathrm{odd}} \TR^r (S;\Zp)$, which correspond to the odd homotopy groups, locally vanish for the quasisyntomic topology.

    Taking the limit over Restriction maps, we pass to the case of $r=\infty$, in which we have the following identifications:
    \begin{equation*}
        \begin{cases}
            \grM^{i,\mathrm{even}} \TR (S;\Zp)^{hS^1} \simeq \limr \nr^{\geq i} \Delc_S \{ i\} [2i]
            \\[5pt]
            \grM^{i,\mathrm{even}} \TR (S;\Zp) \simeq \limr \nr^i \Delc_S \{ i\} [2i]
        \end{cases}
    \end{equation*}
    Locally in the quasisyntomic topology, we have that
    \begin{equation*}
        \begin{cases}
            \grM^i \TR (-;\Zp)^{hS^1} \simeq \ninf^{\geq i} \Delc_{(-)} \{ i\} [2i] := \rlimr \nr^{\geq i} \Delc_{(-)} \{ i\} [2i]
            \\[5pt]
            \grM^i \TR (-;\Zp) \simeq \ninf^i \Delc_{(-)} \{ i\} [2i] := \rlimr \nr^i \Delc_{(-)} \{ i\} [2i]
        \end{cases}
    \end{equation*}
    Thus, locally for the quasisyntomic topology, the odd parts correspond to the $\limr^1$ terms. For $1 \leq r \leq \infty$, the canonical map
    \begin{equation*}
        \TR^r (S;\Zp)^{hS^1} \longrightarrow \TR^r (S;\Zp)
    \end{equation*}
    gives rise to the map from the $i$-th filtered to the $i$-th graded piece for the $r$-Nygaard filtration:
    \begin{equation*}
        \nr^{\geq i} \Delc_S \{ i\} [2i] \longrightarrow \nr^i \Delc_S \{ i\} [2i]
    \end{equation*}
    If we restrict to the case $1 \leq r < \infty$, it follows that the invariants $\TR^r (-;\Zp)$ and $\TR^r (-;\Zp)^{hS^1}$ are locally even with respect to the quasisyntomic topology. Additionally, we can consider the commutative diagram involving higher Frobenii:
    \begin{equation}
        \begin{tikzcd}[column sep=huge]
            \TR^r (S;\Zp)^{hS^1} \arrow[r] \arrow[d] & \TC^- (S;\Zp) \arrow[r, "\varphi^{hS^1}"] \arrow[d] & \TP (S;\Zp) \arrow[d] \\
            \TR^r (S;\Zp) \arrow[r] & \THH (S;\Zp)^{hC_{p^{r-1}}} \arrow[r, "\varphi^{hC_{p^{r-1}}}"] & \THH (S;\Zp)^{tC_{p^r}}
        \end{tikzcd}
    \end{equation}
    gives rise to the commutative diagram on $r$-Nygaard filtered prismatic cohomology groups, involving the $r$-divided prismatic Frobenius, together with its graded version:
    \begin{equation}
        \begin{tikzcd}[column sep=huge]
            \nr^{\geq i} \Delc_S \{ i\} [2i] \arrow[r, "\varphi_{r, i}"] \arrow[d] & \Delc_S \{ i\} [2i] \arrow[d] \\
            \nr^i \Delc_S \{ i\} [2i] \arrow[r, "\gr \varphi_{r, i}"] & \Del_S^{\HT, r} \{ 2i\} [2i]
        \end{tikzcd}
    \end{equation}

    Also, the symmetries of Restriction, Frobenius, and Verschiebung on $\TR^r$ and of its $S^1$-homotopy fixed points, give rise to corresonding symmetries on the graded and filtered pieces for the $r$-Nygaard filtered $\Delc_S$.
    \vspace{0.05in}
    
    \item From the vanishing of odd homotopy groups, locally for the quasisyntomic topology, by applying quasisyntomic descent, we obtain multiplicative spectral sequences, for $1 \leq r \leq \infty$:
    \begin{equation*}
        \begin{cases}
            E_2^{ij} = H^{i-j} \Big( \nr^{\geq -j} \Delc_{(-)} \Big) \Rightarrow \pi_{-i-j} \TR^r (-;\Zp)^{hS^1} \\[5pt]
            E_2^{ij} = H^{i-j} \Big( \nr^{-j} \Delc_{(-)} \Big) \Rightarrow \pi_{-i-j} \TR^r (-;\Zp)
        \end{cases}
    \end{equation*}
    Thus, for $1 \leq r < \infty$ and locally in the quasisyntomic topology, we can identify the $r$-Nygaard filtration on $\nr^{\geq i} \Delc_S \{ i\} [2i]$ as the one coming from the degeneration of the $S^1$-homotopy fixed points spectral sequence.
    \vspace{0.05in}
    
    \item By left Kan extending, all statements can be extended to the case $S$ is a $p$-complete animated ring. One of the failures, as pointed out in \cite{APC}, is that the associated motivic filtrations are not exhaustive.
    \end{enumerate}
\end{thm}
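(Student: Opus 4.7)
The plan is to follow the quasisyntomic descent strategy of \cite{BMS2}, adapted to the $\TR$ tower. The guiding idea is to first work in the quasiregular-semiperfectoid (qrsp) case, establish that $\TR^r(S;\Zp)$ and $\TR^r(S;\Zp)^{hS^1}$ are \emph{even} (concentrated in even degrees qs-locally), which forces the motivic filtration to coincide with the double-speed Postnikov filtration, and only then sheafify to obtain statement (1). First I would verify that $\TR^r(-;\Zp)$ and $\TR^r(-;\Zp)^{hS^1}$ are sheaves for the quasisyntomic topology: this is inherited from $\THH$ through the operations of $C_{p^k}$-fixed points, $S^1$-homotopy fixed points, pullbacks, and limits along $\F$ that build the TR tower in the Nikolaus--Scholze framework.

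The key structural input is a Nikolaus--Scholze style presentation of $\TR^r(S;\Zp)^{hS^1}$ as a finite iterated pullback
\begin{equation*}
    \TR^r(S;\Zp)^{hS^1} \simeq \TC^-(S;\Zp) \times_{\TP(S;\Zp)} \TC^-(S;\Zp) \times_{\TP(S;\Zp)} \cdots \times_{\TP(S;\Zp)} \TC^-(S;\Zp)
\end{equation*}
with $r$ copies of $\TC^-$ glued alternately along the higher Frobenius $\varphi^{hS^1}$ and the canonical map $\can$. This matches term-by-term the iterated pullback defining $\nr^{\geq i} \Delc_S \{ i\}$ in Theorem \ref{thm1}. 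An analogous presentation holds for $\TR^r(S;\Zp)$ in terms of $\THH(S;\Zp)^{hC_{p^k}}$ and $\THH(S;\Zp)^{tC_{p^k}}$, corresponding to the graded version of the pullback. This reduces the problem to what is already known for $\TC^-$, $\TP$, and their finite-level analogues.

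Next I would establish evenness in the qrsp case by induction on $r$. The base case $r=1$ is \cite{BMS2}. For the inductive step, using the pullback presentation, what must be checked is that the higher Frobenius $\varphi^{hS^1} : \TC^-(S;\Zp) \to \TP(S;\Zp)$ is surjective on even homotopy groups qs-locally, so that the pullback carries no $\pi_{\mathrm{odd}}$ contribution. Under the BMS2 identifications $\pi_{2i}\TC^-(S;\Zp) = \n^{\geq i}\Delc_S\{i\}$ and $\pi_{2i}\TP(S;\Zp) = \Delc_S\{i\}$, this higher Frobenius corresponds to the divided prismatic Frobenius $\varphi_i$, whose qs-local surjectivity is a known input. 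The pullback of abelian groups then computes exactly $\nr^{\geq i}\Delc_S\{i\}$ by construction, giving the even identification of part (2); the graded version follows by taking associated gradeds of the pullback square, and the maps $\Res$, $\F$, $\V$ on the topological side match by construction the ones built into the iterated pullback on the prismatic side.

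Finally, quasisyntomic sheafification converts the double-speed Postnikov filtration into the motivic filtration, and the spectral sequences of part (3) are the standard ones for a complete filtration on a sheaf of spectra with vanishing odd sheafified homotopy; multiplicativity is inherited from the $\einfty$-ring structure on the pullback. The passage to $r = \infty$ is handled by the Milnor sequence in the Restriction tower, producing the $\limr$ term on even parts and the $\limr^1$ term on odd parts. Part (4) is obtained by left Kan extension from finitely generated polynomial algebras to animated rings, formally. The main obstacle is the qs-local evenness together with the matching of the pullback square on homotopy groups with the iterated pullback defining the $r$-Nygaard filtration: once the higher Frobenius on $\pi_{2i}$ is identified with $\varphi_i$ and the canonical map with the inclusion $\iota : \n^{\geq i}\Delc_S\{i\} \hookrightarrow \Delc_S\{i\}$, the rest falls into place, but verifying these compatibilities at every level of the tower and threading them through the Nikolaus--Scholze presentation is the technical heart of the argument.
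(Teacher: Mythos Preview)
Your proposal is essentially correct and follows the same route as the paper: reduce to the quasiregular-semiperfectoid case via the iterated pullback presentation of $\TR^r(-;\Zp)^{hS^1}$ in terms of $\TC^-$ and $\TP$, identify $\pi_{2i}$ with $\nr^{\geq i}\Delc_S\{i\}$, kill the odd homotopy groups quasisyntomic-locally using the Bhatt--Scholze surjectivity of the divided Frobenius, then descend, pass to $r=\infty$ via the Milnor sequence, and left Kan extend for part (4).

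Two small deviations are worth flagging. First, where you argue by induction on $r$, the paper instead writes the full exact sequence
\[
0 \to \nr^{\geq i}\Delc_S\{i\} \to \prod_{k=1}^{r} \n^{\geq i}\Delc_S\{i\} \xrightarrow{\alpha} \prod_{k=1}^{r-1} \Delc_S\{i\} \to \pi_{2i-1}\TR^r(S;\Zp)^{hS^1} \to 0
\]
in one step and precomposes $\alpha$ with the diagonal $\n^{\geq i}\Delc_S\{i\} \to \prod_k \n^{\geq i}\Delc_S\{i\}$; since $\alpha\circ\mathrm{diag}$ is exactly the map to which \cite[Sec.~14]{BS} applies, one gets local surjectivity of $\alpha$ directly without threading an induction. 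Second, for the passage from $\TR^r(-;\Zp)^{hS^1}$ to $\TR^r(-;\Zp)$ the paper uses the explicit quotient by the class $v_r \in \pi_{-2}\TR^r(R_0;\Zp)^{hS^1}$ (their Proposition~\ref{quotient}) rather than reading off associated gradeds from a separate pullback description; this is what lets them deduce the identification $\pi_{2i}\TR^r(S;\Zp)\simeq \nr^i\Delc_S\{i\}$ immediately from the $hS^1$ computation. Neither point is a gap in your argument, but both are worth adopting since they streamline the bookkeeping.
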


Topological restriction homology was the invariant used in the first attempt to understand topological cyclic homology, via the formula:
\begin{gather*}
    \TC (-;\Zp) \simeq \fib \Big( 1-\F : \TR (-;\Zp) \longrightarrow \TR (-;\Zp) \Big) \\
    \simeq \limr \fib \Big( \Res - \F : \TR^{r+1} (-;\Zp) \longrightarrow \TR^r (-;\Zp) \Big) =: \limr \TC_r (-;\Zp)
\end{gather*}

However, $\TR$ is a cyclotomic spectrum (with Frobenius lifts) on its own, therefore it is natural to attempt to study $\TC \big( \TR \big)$, but also $\widetilde{\TC} \big( \TR \big)$, where $\widetilde{\TC} (-) := \map_{\CycspFr} (\Sph, - )$ is a version of topological cyclic homology for cyclotomic spectra with Frobenius lifts. The following theorem is concerned with these questions:

\begin{thm}[Topological cyclic homology of $\TR$] \label{thm4}
Applying the previous results to the setup of topological cyclic homology, the following are true:
\begin{enumerate}[1)]
    \item Topological cyclic homology for cyclotomic spectra with Frobenius lifts of $\TR (-;\Zp)$ is equivalent to:
    \begin{gather*}
        \widetilde{\TC} \Big( \TR (-;\Zp) \Big) \simeq \fib \Big( 1 - \F^{hS^1} : \TR (-;\Zp)^{hS^1} \longrightarrow \TR (-;\Zp)^{hS^1} \Big) \simeq \limr \widetilde{\TC}^r \Big( \TR (-;\Zp)\Big)
    \end{gather*}
    where we define the family of spectra $\widetilde{\TC}^r \Big( \TR (-;\Zp)\Big)$ to be:
    \begin{equation*}
        \widetilde{\TC}^r \Big( \TR (-;\Zp)\Big) := \fib \Big( \Res^{hS^1} - \F^{hS^1} : \TR^r (-;\Zp)^{hS^1} \longrightarrow \TR^{r-1} (-;\Zp)^{hS^1} \Big)
    \end{equation*}
    Mapping further to $\TP$, one obtains the following family of spectra, defined as:
    \begin{equation*}
        \TC^r \Big( \TR (-;\Zp) \Big) := \fib \Big( \can - \varphi^{hS^1} : \TR^r (-;\Zp)^{hS^1} \longrightarrow \TC^- (-;\Zp) \longrightarrow \TP (-;\Zp) \Big)
    \end{equation*}
    which "interpolates" between $\TC \big( \TR (-;\Zp) \big)$, for $r=\infty$, and $\TC (-;\Zp)$, for $r=1$:
    \begin{gather*}        
        \TC^r \Big( \TR (-;\Zp) \Big) \simeq \fib \Big( \Res^{hS^1} - \F^{hS^1} : \TR^r (-;\Zp)^{hS^1} \longrightarrow \TR^{r-1} (-;\Zp)^{hS^1} \longrightarrow \TP (-;\Zp) \Big) \\
        \simeq \fib \Big( \can - \varphi^{hS^1} : \TR^r (-;\Zp)^{hS^1} \longrightarrow \TC^- (-;\Zp) \longrightarrow \TP (-;\Zp) \Big)
    \end{gather*}
    \vspace{0.05in}
    
    \item For $1 \leq r \leq \infty$ and the input of  a quasisyntomic ring $S$, topological cyclic homology $\TC_r (-;\Zp)$, as well as the spectra of part (1) are equipped with exhaustive, decreasing, multiplicative, $\Z$-indexed motivic filtrations. Locally for the quasisyntomic topology, these can be expressed as follows:
    \begin{equation*}
        \begin{cases}
            \grM^i \TC_r (-;\Zp) \simeq \fib \Big( \Res - \F : \mathcal{N}_{r+1}^i \Delc_{(-)} \{ i\} [2i] \longrightarrow \nr^i \Delc_{(-)} \{ i\} [2i] \Big) \\[5pt]
            \grM^i \widetilde{\TC}^r \Big( \TR (-;\Zp) \Big) \simeq \fib \Big( \Res^{hS^1} - \F^{hS^1} : \nr^{\geq i} \Delc_{(-)} \{ i\} [2i] \longrightarrow \mathcal{N}_{r-1}^{\geq i} \Delc_{(-)} \{ i\} [2i] \Big) \\[5pt]
            \grM^i \TC^r \Big( \TR (-;\Zp) \Big) \simeq \fib \Big( \can - \varphi^{hS^1} : \nr^{\geq i} \Delc_{(-)} \{ i\} [2i] \longrightarrow \Delc_{(-)} \{ i\} [2i] \Big)
        \end{cases}
    \end{equation*}
    Locally for the quasisyntomic topology, these give rise to spectral sequences which degenerate.
    Of course, as in the main theorem, the statements above can also be suitably extended to the case we work with a $p$-complete animated ring.
\end{enumerate}
\end{thm}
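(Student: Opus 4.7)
The plan is to establish part (1) by leveraging the cyclotomic-spectrum-with-Frobenius-lift structure on $\TR(-;\Zp)$, and to establish part (2) by applying the motivic filtrations of Theorem~\ref{thm3} termwise to the fiber sequences defining each of the resulting invariants.

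For part (1), I begin by recognizing $\TR(-;\Zp)$ as an object of $\CycspFr$ whose Frobenius lift is the Witt-vector-style $\F : \TR \to \TR$. The general formula for $\widetilde{\TC} = \map_{\CycspFr}(\Sph,-)$ applied to such an object expresses it as the fiber of $1 - \F^{hS^1}$ acting on $\TR(-;\Zp)^{hS^1}$. Writing $\TR \simeq \limr \TR^r$ along the Restriction maps distributes this fiber as a limit over $r$ of the finite-level fibers, which I take as the definition of $\widetilde{\TC}^r(\TR(-;\Zp))$. The second presentation, via the map to $\TP$, is obtained by postcomposing $\TR^{r-1}(-;\Zp)^{hS^1} \to \TC^-(-;\Zp) \to \TP(-;\Zp)$ and identifying the result with $\can - \varphi^{hS^1}$ through the Nikolaus--Scholze description $\TC \simeq \fib(\can - \varphi^{hS^1})$. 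A diagram chase comparing the Witt-Frobenius on $\TR^r$ with the higher Frobenius on $\THH$ then yields the equivalence between the two forms of $\TC^r(\TR)$; specialization to $r = 1$ (where $\TR^1 = \THH$) recovers $\TC(-;\Zp)$, and passage to $r = \infty$ recovers $\TC(\TR(-;\Zp))$.

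For part (2), by Theorem~\ref{thm3} both $\TR^r(-;\Zp)^{hS^1}$ and $\TR^r(-;\Zp)$ carry complete, exhaustive, multiplicative motivic filtrations; for $r < \infty$ they are locally quasisyntomically even, and the Restriction, Frobenius, canonical, and higher-Frobenius maps respect these filtrations, with their action on the graded pieces encoded by $\nr^{\geq i}\Delc_{(-)}\{i\}[2i]$, $\nr^i\Delc_{(-)}\{i\}[2i]$, the $r$-divided prismatic Frobenius $\varphi_{r,i}$, and its graded counterpart. Placing the induced motivic filtration on each of $\TC_r(-;\Zp)$, $\widetilde{\TC}^r(\TR)$, and $\TC^r(\TR)$ by taking termwise fibers, the associated gradeds become fibers of maps between the corresponding graded pieces; inserting the identifications of Theorem~\ref{thm3} delivers the three formulas claimed in the theorem. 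Local evenness transports through the fiber sequences and produces the asserted degenerate quasisyntomic descent spectral sequences, and left Kan extending from polynomial (resp. quasisyntomic) inputs provides the extension to $p$-complete animated rings.

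The main obstacle is the compatibility verification underlying part (2): one must check that, under the identifications of Theorem~\ref{thm3}, the Frobenius-lift structure on $\TR^r(-;\Zp)^{hS^1}$ intertwines with the $r$-divided prismatic Frobenius $\varphi_{r,i}$ on $\nr^{\geq i}\Delc_{(-)}\{i\}[2i]$, and similarly for Restriction and the canonical map to $\TC^-$. By quasisyntomic descent this reduces to the quasiregular-semiperfectoid case, where both sides become two-term complexes and the comparison is a concrete diagram chase with explicit presentations; the commutative square involving higher Frobenii recorded in Theorem~\ref{thm3} is precisely the bookkeeping device needed to match the two sides. A further subtlety is the passage to $r = \infty$ for $\widetilde{\TC}(\TR)$: the $\lim^1$ contributions to the odd parts correspond exactly to the $\rlimr$ correction defining $\ninf^{\geq i}\Delc$, so that the local failure of evenness in the limit is accounted for by the very construction of the $\infty$-Nygaard filtration.
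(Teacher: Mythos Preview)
Your proposal is correct and follows essentially the same route as the paper: Part~(1) is handled in the preliminaries (Section~2.4) by exactly the diagram chase you describe, relating $\Res^{hS^1},\F^{hS^1}$ on $\TR^r(-;\Zp)^{hS^1}$ to $\can,\varphi^{hS^1}$ on $\TC^-\to\TP$, and the paper's proof of Part~(2) is nothing more than ``take termwise fibers of the motivic filtrations supplied by Theorem~\ref{thm3}''.

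One caveat: your sentence ``local evenness transports through the fiber sequences'' is not true as a general principle---the fiber of a map between locally even spectra acquires a potential odd contribution from the cokernel on $\pi_{2i}$. For $\TC^r(\TR)$ this cokernel vanishes quasisyntomic-locally precisely by the Bhatt--Scholze surjectivity result already invoked in the proof of Theorem~\ref{thm3} (surjectivity of $\can-\varphi^{hS^1}$ after passing to a cover), and similarly for the other invariants; you should cite that rather than assert a transport principle. The paper is equally terse on this point, so your argument is not less complete than the original, but the justification you give for degeneration is the wrong one.
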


Let us, finally, specialize to the cases of mixed and positive characteristic. In the former, we recover the relation to the objects $\widetilde{W_r\Omega}_S$ and $A\Omega \simeq \lim_r \widetilde{W_r\Omega}_S$, which were introduced and studied in \cite{BMS1}. In particular, the authors explain that they were able to build these complexes by studying $\TR^r$ of perfectoid rings. This was indeed the precursor of the prismatic theory.
\begin{thm}[Mixed characteristic] \label{thm5}
    Let $(A,I)$ be a perfect prism and consider S to be a $p$-completely smooth ring over the perfectoid ring $R_0 := \overline{A}$. Then, the $r$-Nygaard filtration can be expressed in terms of the d\'ecalage functor, associated with the element $\xi_r \in A$:
    \begin{equation*}
        \begin{tikzcd}[column sep=huge]
            \nr^{\geq i} \Del_S \arrow[r, "\simeq"] & \nr^{\geq i} (\varphi^r)^* \Del_{S/A} \arrow[r, "\simeq"] & L \eta_{\xi_r} ^{\geq i} \Del_{S/A}
        \end{tikzcd}
    \end{equation*}
    Let $R_0=\mathcal{O}_C$, where $C$ is a perfectoid field containing all $p$-roots of unity. Then we find ourselves in the setting of \cite{BMS1} and thus recover a relation to the $A\Omega$-cohomology. More specifically, we have that $\nr^{\geq i} \Del_S \simeq L\eta_{\xi_r}^{\geq i} A \Omega_S$. Passing to the associated graded pieces (see also the related discussion of \cite[Rem. 1.20]{BMS1} on the lift of the Cartier isomorphism in mixed characteristic), we have the following interpretation of the lower row of the square in Theorem \ref{thm2}:
    \begin{equation*}
        \begin{tikzcd}[column sep=huge]
            \nr^i A\Omega_S \arrow[rr] \arrow[dr, bend right=15, "\simeq" description] & & \widetilde{W_r\Omega}_S \\[-10pt]
            & \tau^{\leq i} \widetilde{W_r\Omega}_S \arrow[ur, bend right=15, hook] &
        \end{tikzcd}
    \end{equation*}
    This allows us to obtain the following expression, regarding the motivic filtration of $\TR^r$:
    \begin{equation*}
        \begin{tikzcd}
            \grM^{i, \mathrm{even}} \TR^r (S;\Zp) \simeq \nr^i A\Omega_S [2i] \arrow[r, "\simeq"] & \tau^{\leq i} \widetilde{W_r\Omega}_S \{ i\} [2i]
        \end{tikzcd}
    \end{equation*}
    Taking the limit with respect to the Frobenius maps, we recover $A\Omega$-cohomology via the motivic filtration of topological Frobenius homology:
    \begin{equation*}
        \grM^{i, \mathrm{even}} \TF (S;\Zp) \simeq \tau^{\leq i} A\Omega_S \{ i\} [2i]
    \end{equation*}
\end{thm}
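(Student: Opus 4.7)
The plan is to deduce everything by combining the general structural results of Theorems \ref{thm1}, \ref{thm2}, and \ref{thm3} with the perfect-prism simplifications that are available in the setting of \cite{BMS1}. First, I would use that $(A,I)$ is a perfect prism to collapse absolute and relative prismatic cohomology: the Frobenius on $A$ is an isomorphism, so $\Del_S \simeq (\varphi_A^r)^* \Del_{S/A}$ compatibly with the $r$-Nygaard filtration, yielding the first equivalence $\nr^{\geq i} \Del_S \simeq \nr^{\geq i} (\varphi_A^r)^* \Del_{S/A}$. Because $S$ is $p$-completely smooth over $\overline{A}$, I can then invoke the smooth case of Theorem \ref{thm2}, which gives the factorization of the $r$-th iterate of relative prismatic Frobenius through $L\eta_{I_r}$; combined with the identification of the relative $r$-Nygaard filtration with the Beilinson connective cover of the $I_r$-adic filtration, this upgrades to the filtered statement $\nr^{\geq i}(\varphi_A^r)^* \Del_{S/A} \simeq L\eta_{\xi_r}^{\geq i} \Del_{S/A}$, where I use $I_r = (\xi_r)$ in the perfect case.

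Next, to pass from $\Del_{S/A}$ to $A\Omega_S$ in the $\mathcal{O}_C$-setting, I would cite the comparison theorem of \cite{BMS1}, which is essentially the definition in the perfectoid base case after the prismatic reformulation: $\Del_{S/\ainf} \simeq A\Omega_S$. Substituting into the chain of equivalences above yields $\nr^{\geq i} \Del_S \simeq L\eta_{\xi_r}^{\geq i} A\Omega_S$ as claimed.

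For the commutative triangle, I would pass to associated graded pieces of the conjugate filtration using Theorem \ref{thm2}: in the perfect prism case, $\nr^i (\varphi_A^r)^* \Del_{S/A} \simeq \tau^{\leq i}\Del_{S/A}^{\HT,r}$ via the Postnikov identification. The key input needed is the identification of the $r$-Hodge--Tate cohomology $\Del_{S/\ainf}^{\HT,r}$ with $\widetilde{W_r\Omega}_S$ up to the appropriate Breuil--Kisin twist, which follows by base changing $A\Omega_S$ along $\ainf \to \ainf / \xi_r$ and matching this with the construction of $\widetilde{W_r\Omega}_S$ in \cite[Sec.~11]{BMS1}. The triangle then expresses the factorization of the inclusion $\tau^{\leq i}\widetilde{W_r\Omega}_S \hookrightarrow \widetilde{W_r\Omega}_S$ via the graded piece $\nr^i A\Omega_S$.

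Finally, I would feed these identifications into Theorem \ref{thm3} to obtain $\grM^{i,\mathrm{even}}\TR^r(S;\Zp) \simeq \nr^i \Delc_S\{i\}[2i]$, which under the equivalences above becomes $\tau^{\leq i}\widetilde{W_r\Omega}_S\{i\}[2i]$; no completion subtleties intervene since $A\Omega_S$ is already Nygaard complete for smooth $S$ over $\mathcal{O}_C$. For $\TF$, I would take the limit over Frobenius maps $\F : \TR^{r+1} \to \TR^r$, which on graded pieces implements the limit $\lim_r \widetilde{W_r\Omega}_S \simeq A\Omega_S$ of \cite{BMS1}; exchanging the $\tau^{\leq i}$ with the Frobenius limit (which is harmless by connectivity bounds) produces the claimed $\grM^{i,\mathrm{even}}\TF(S;\Zp) \simeq \tau^{\leq i} A\Omega_S\{i\}[2i]$. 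The main obstacle I anticipate is verifying the precise matching of Breuil--Kisin twists and the compatibility of the $\F$-tower of $r$-Hodge--Tate cohomologies with the Frobenius tower $\{\widetilde{W_r\Omega}_S\}$ of \cite{BMS1}; everything else is a bookkeeping exercise once these identifications are nailed down.
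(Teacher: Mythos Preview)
Your proposal is correct and follows essentially the same route as the paper. The paper's argument is terser but structurally identical: it cites \cite[Prop.~9.10]{BMS2} for the identification $\Del_S \simeq A\Omega_S$ over a perfectoid base, invokes the relative d\'ecalage identification (Corollary~\ref{r-Nygaard filtration via decalage}, which is the smooth case of Theorem~\ref{thm2} you appeal to) to get $\nr^{\geq \bullet}\Del_S \simeq L\eta_{\xi_r}^{\geq \bullet} A\Omega_S$, and then reads off the triangle and the motivic filtration statements directly from the Cartier-isomorphism discussion in \cite{BMS1} together with Theorem~\ref{thm3}; the $\TF$ claim is obtained by taking the Frobenius limit exactly as you describe. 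The bookkeeping concerns you flag (Breuil--Kisin twists, compatibility of the $\F$-tower with $\{\widetilde{W_r\Omega}_S\}$, Nygaard completion) are not addressed in any additional detail by the paper --- they are absorbed into the references to \cite{BMS1}.
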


Passing to the latter case of positive characteristic, we show that in the quasiregular-semiperfect setting, $\TR^r$ satisfies odd-vanishing. Therefore, things work as in the case of \cite{BMS2} and, ultimately, we obtain a relation with the classical de Rham--Witt complex in positive characteristic.

\begin{thm}[Positive Characteristic case] \label{thm6}
    If $S$ is a quasiregular-semiperfect $\Fp$-algebra, the odd homotopy groups of the following invariants vanish, for $1 \leq r \leq \infty$:
    \begin{equation*}
        \begin{cases}
            \pi_{\mathrm{odd}} \TR^r (S;\Zp)^{hS^1} \simeq 0 \\[5pt]
            \pi_{\mathrm{odd}} \TR^r (S;\Zp) \simeq 0
        \end{cases}
    \end{equation*}
    In particular, and in analogy with \cite{BMS2}, we can identify the $r$-Nygaard filtration on $\Delc_S \simeq \acrys (S)$ as the one coming from the $S^1$-homotopy fixed points spectral sequence, for $\TR^r (S;\Zp)^{hS^1}$, which degenerates. This was also recently discussed in \cite{darrell2023mathrm} and \cite{devalapurkar2023p}.

    If we assume that $S$ is a smooth $k$-algebra, where $k$ is a perfect field of characteristic $p$, Theorem \ref{thm5}, which is related to the mixed characteristic lift of the Cartier isomorphism, reduces to a statement about the classical de Rham--Witt complex in positive characteristic. Therefore, one has the following identification regarding the $r$-Nygaard filtration:
    \begin{equation*}
        \nr^{\geq i} (\varphi^r)^* \Del_{S/ W(k)} \simeq L \eta_{p^r} R\Gamma_{crys} (S)
    \end{equation*}
    It follows that:
    \begin{equation*}
        \grM^i \TR^r (S;\Zp) \simeq \tau^{\leq i} W_r\Omega_{S/k}^{\bullet} [2i]
    \end{equation*}
\end{thm}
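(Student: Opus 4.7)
The plan is to establish the odd vanishing first, deduce the spectral-sequence degeneration as an immediate corollary, and then specialize Theorem \ref{thm5} to the perfect prism $(W(k), (p))$ for the smooth case. The heart of the argument is the odd vanishing; the rest follows formally from results stated earlier in the paper.

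For a quasiregular-semiperfect $\Fp$-algebra $S$, I would begin with the classical fact that $\pi_* \THH(S;\Zp)$ is concentrated in even degrees, a consequence of $\pi_* \THH(\Fp) = \Fp[u]$ with $|u|=2$ together with flat descent along the lines of \cite[\S 4]{BMS2}. Since $\THH(S;\Zp)$ is thus an even $S^1$-spectrum, each homotopy fixed-point and Tate spectral sequence for a cyclic subgroup $C_{p^k} \subset S^1$ collapses at $E_2$ on parity grounds, so $\THH(S;\Zp)^{hC_{p^k}}$, $\THH(S;\Zp)_{hC_{p^k}}$, and $\THH(S;\Zp)^{tC_{p^k}}$ are all even. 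The evenness of $\TR^r(S;\Zp)$ then follows by induction on $r$ from the fundamental fibre sequence
\begin{equation*}
    \THH(S;\Zp)_{hC_{p^{r-1}}} \xrightarrow{\V} \TR^r(S;\Zp) \xrightarrow{\Res} \TR^{r-1}(S;\Zp),
\end{equation*}
the base case being $\TR^1 = \THH$; applying $(-)^{hS^1}$ preserves evenness by the same parity argument on the $S^1$-HFPSS. For $r = \infty$, one obtains $\TR(S;\Zp)$ as the derived limit along Restriction, so the Milnor exact sequence
\begin{equation*}
    0 \to {\lim}^1_r \pi_{n+1} \TR^r(S;\Zp) \to \pi_n \TR(S;\Zp) \to \lim_r \pi_n \TR^r(S;\Zp) \to 0
\end{equation*}
reduces odd vanishing for $\TR$ to the Mittag--Leffler property of the Restriction tower $\{\pi_{2m} \TR^r(S;\Zp)\}_r$, which I would verify using the description of each term via the $r$-Nygaard filtered $\acrys(S)$ furnished by Theorem \ref{thm3}. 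Part (2) is then immediate: the identification $\Delc_S \simeq \acrys(S)$ for a quasiregular-semiperfect $\Fp$-algebra is a standard consequence of the Bhatt--Scholze prismatic-crystalline comparison, and the spectral sequence of Theorem \ref{thm3}(3) collapses at $E_2$, so the $r$-Nygaard filtration on $\acrys(S)$ coincides with the filtration inherited from the $S^1$-HFPSS of $\TR^r(S;\Zp)^{hS^1}$.

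For the smooth case, I would apply Theorem \ref{thm5} to the perfect prism $(W(k),(p))$. Here the distinguished element is $\xi = p$, so $\xi_r = \xi \cdot \varphi(\xi) \cdots \varphi^{r-1}(\xi) = p^r$, and the Bhatt--Scholze prismatic-crystalline comparison identifies $\Del_{S/W(k)} \simeq R\Gamma_{crys}(S/W(k))$ with the Frobenius twist absorbed into $(\varphi^r)^*$. Theorem \ref{thm5} then yields $\nr^{\geq i}(\varphi^r)^* \Del_{S/W(k)} \simeq L\eta_{p^r} R\Gamma_{crys}(S)$. Combining Theorem \ref{thm3}'s identification $\grM^i \TR^r(S;\Zp) \simeq \nr^i \Delc_S \{i\}[2i]$ with this $L\eta_{p^r}$-description and the classical Illusie--Raynaud identification $L\eta_{p^r} R\Gamma_{crys}(S/W(k)) \simeq W_r\Omega^{\bullet}_{S/k}$, followed by Postnikov truncation at the level of the conjugate filtration on Hodge--Tate cohomology (as in Theorem \ref{thm2}), gives the desired $\grM^i \TR^r(S;\Zp) \simeq \tau^{\leq i} W_r\Omega^{\bullet}_{S/k}[2i]$.

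The main obstacle is the handling of the $r = \infty$ case in the odd vanishing: one must rule out $\lim^1$-contributions along the Restriction tower, which requires leveraging the concrete structure of $\nr^i \acrys(S)$ rather than purely abstract arguments. A secondary bookkeeping issue is tracking the compatibility of the Frobenius twist $(\varphi^r)^*$ with the crystalline Frobenius under the Bhatt--Scholze comparison, so that $L\eta_{p^r}$ acts on the correct side of the identification; this is routine, but care is needed to match the normalizations in Theorem \ref{thm5} with those in Illusie--Raynaud.
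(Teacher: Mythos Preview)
Your approach to the odd vanishing is correct but genuinely different from the paper's. The paper works \emph{top-down}: it starts from the iterated pullback presentation
\[
\TR^r(S;\Zp)^{hS^1}\simeq \TC^-(S;\Zp)\times_{\TP(S;\Zp)}\cdots\times_{\TP(S;\Zp)}\TC^-(S;\Zp),
\]
and uses the surjectivity of the divided Frobenius $\varphi_i:\n^{\geq i}\acrys(S)\to\acrys(S)$ from \cite[\S 8]{BMS2} to show that the map $\alpha$ in the resulting exact sequence is surjective, whence $\pi_{2i-1}(\TR^r)^{hS^1}=0$; evenness of $\TR^r$ itself is then deduced by setting $v_r\mapsto 0$. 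You instead work \emph{bottom-up}: evenness of $\THH(S;\Zp)$, then of $\THH(S;\Zp)_{hC_{p^{r-1}}}$ by parity in the homotopy-orbit spectral sequence, then of $\TR^r(S;\Zp)$ by induction along the isotropy-separation fibre sequence, and finally of $(\TR^r)^{hS^1}$ by parity in the $S^1$-HFPSS. Your route is more elementary and avoids invoking the Frobenius surjectivity; the paper's route has the advantage of simultaneously identifying the even homotopy groups with the $r$-Nygaard pieces, so the spectral-sequence identification of the $r$-Nygaard filtration is immediate rather than requiring a separate appeal to Theorem~\ref{thm3}.

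One remark: the obstacle you flag for $r=\infty$ is in fact already resolved by your own argument. The long exact sequence of your fibre sequence shows that $\Res:\pi_{2i}\TR^r(S;\Zp)\to\pi_{2i}\TR^{r-1}(S;\Zp)$ is surjective (the next term is an odd homotopy group of $\THH_{hC_{p^{r-1}}}$, which vanishes), so the Restriction tower is Mittag--Leffler on the nose and $\lim^1=0$ without any further analysis of $\nr^i\acrys(S)$. The paper simply asserts the $\lim^1$ vanishing ``since we are in characteristic $p>0$''; your setup actually supplies the reason. For the smooth $k$-algebra part, your specialization of Theorem~\ref{thm5} to $(W(k),(p))$ with $\xi_r=p^r$ and the Illusie--Raynaud identification matches the paper's treatment.
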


\subsection*{Proof Outline}
In this paper, we first deal with the $r$-Nygaard filtration, as it arises on completed prismatic cohomology $\Delc_S$, via the homotopy theoretic machinery. This way we obtain the non-trivial relation with the Witt vectors: $\nr^0 \Delc_S \simeq \W_r (S)$. We then move on to describe the $r$-Nygaard filtration on the non-completed version of prismatic cohomology, via algebraic means.

Regarding the homotopy-theoretic viewpoint, the main theme of this work is, instead of directly dealing with $\TR^r (-;\Zp)$, to first study its $S^1$-homotopy fixed points $\TR^r (-;\Zp)^{hS^1}$, for $1 \leq r < \infty$. This is where the $r$-Nygaard filtration on completed prismatic cohomology arises from. Then, we use a trick of Nikolaus--Scholze \cite{AN}, where by taking quotient with the class generating $\pi_{-2} \TR^r (-;\Zp)^{hS^1}$, we are able to pass to $\TR^r (-;\Zp)$. Thus, this enables us to identify the relevant structure of the latter with the graded pieces for the $r$-Nygaard filtration. This is a pattern systematically used in \cite{BMS2}, where $\TC^- (-;\Zp)$ gives rise to the Nygaard filtration on completed prismatic cohomology, while passing to $\THH (-;\Zp)$ gives rise to its associated graded pieces.

As in \cite{BMS2}, the first step is to try and make precise calculations in the perfectoid case. This is, indeed, possible as for a perfectoid ring $R_0$, the following identification holds:
\begin{gather*}
    \pi_* \TR^r (R_0;\Zp)^{hS^1} \simeq \ainf (R_0) [u_r, v_r]/(u_r v_r - \xi_r) \\
    \mathrm{deg}(u_r) = 2, \mathrm{deg}(v_r) = -2, \mathrm{deg}(\xi_r) = 0
\end{gather*}
Remember that $\widetilde{\vartheta}_r : \ainf (R_0) \to \W_r (R_0)$ is the usual projection to the $r$-truncated Witt vectors, while $\vartheta_r := \widetilde{\vartheta}_r \circ \varphi^r : \ainf (R_0) \to \W_r (R_0)$ is twisted by the $r$-th iterated prismatic Frobenius. The kernel of the latter is generated by the element $\xi_r$, while of the former by $\widetilde{\xi}_r = \varphi^r (\xi_r)$.

Taking quotient with respect to $v_r \in \pi_{-2} \TR^r (R_0;\Zp)^{hS^1}$ takes us back to $\TR^r (R_0;\Zp)$, thus giving rise to the following equivalence:
\begin{gather*}
    \pi_* \TR^r (R_0;\Zp) \simeq \W_r (R_0) [u_r]
\end{gather*}

Equivalently, we can reformulate these as:
\begin{equation*}
    \begin{cases}
        \pi_{2i} \TR^r (R_0;\Zp)^{hS^1} \simeq \xi_r^i \ainf (R_0) \{ i\} \\[5pt]
        \pi_{2i} \TR^r (R_0;\Zp) \simeq \xi_r^i/ \xi_r^{i+1} \ainf (R_0) \{ i\}
    \end{cases}
\end{equation*}

By taking the limit over Restriction maps, we have the following identification for the two-term complexes:
\begin{equation*}
    \begin{cases}
        \tau_{[2i-1,2i]} \TR (R_0;\Zp)^{hS^1} \simeq \rlimr \xi_r^i \ainf (R_0) \{ i\} \\[5pt]
        \tau_{[2i-1,2i]} \TR (R_0;\Zp) \simeq \rlimr \xi_r^i/\xi_r^{i+1} \ainf (R_0) \{ i\}
    \end{cases}
\end{equation*}

For this, we use the following iterated pullback formula of the $S^1$-homotopy fixed points, for $1 \leq r \leq \infty$:
\begin{align*}
    \TR^r (-;\Zp)^{hS^1} & \simeq \TC^- (-;\Zp) \times_{\TP (-;\Zp)} \dots \times_{\TP (-;\Zp)} \TC^- (-;\Zp) \\
    & \simeq \fib \Bigg( \prod_{1 \leq k \leq r} \TC^- (-;\Zp) \longrightarrow \prod_{1 \leq k \leq r-1} \TP (-;\Zp) \Bigg) 
\end{align*}

Following \cite{BMS2}, we then pass to the study of quasiregular-semiperfectoid rings, which provide a basis for applying quasisyntomic descent. Suppose we have a quasirgular-semiperfectoid ring over a fixed perfectoid base $R_0 \to S$. Using the last presentation for $\TR^r (S;\Zp)^{hS^1}$, we have the following identification for the even homotopy groups, with the $r$-Nygaard filtered complete prismatic cohomology:
\begin{equation*}
    \pi_{2i} \TR^r (S;\Zp)^{hS^1} \simeq \nr^{\geq i} \Delc_S \{ i\}
\end{equation*}
The filtration $\nr^{\geq \bullet} \Delc_S$ is a decreasing, multiplicative, complete filtration on $\Delc_S$, with the following iterated pullback description:
\begin{align*}
    \nr^{\geq i} \Delc_S \{ i\} & := \n^{\geq i} \Delc_S \{ i\} \times_{\Delc_S \{ i\}} \dots \times_{\Delc_S \{ i\}} \n^{\geq i} \Delc_S \{ i\}
\end{align*}
Equivalently, we have the following more descriptive definition, in analogy with the classical Nygaard filtration:
\begin{equation*}    
    \nr^{\geq i} \Delc_S = \Bigg\{ x \in \Delc_S \; \Big| \; \varphi^{ri} (x) \in \widetilde{\xi}_r^i \Delc_S \Bigg\}
\end{equation*}

Passing back to $\TR^r (S;\Zp)$ via the Nikolaus-Scholze trick, we are able to identify its even homotopy groups with the associated graded pieces of the $r$-Nygaard filtration:
\begin{equation*}
    \pi_{2i} \TR (S;\Zp) \simeq \nr^i \Delc_S \{ i\}
\end{equation*}

It follows that the map $\varphi^{hS^1} : \TR^r (S;\Zp)^{hS^1} \to \TC^- (S;\Zp) \to \TP (S;\Zp)$ gives rise to an $r$-divided prismatic Frobenius:
\begin{equation*}
    \varphi_{r, i} : \nr^{\geq i} \Delc_S \{ i\} \longrightarrow \Delc_S \{ i\}
\end{equation*}
This is related to the $r$-th iterated prismatic Frobenius, via the formula: $\varphi_{r, i} = \varphi^{ri}/\widetilde{\xi}_r^i$

In order to move forward, as we already mentioned, we need to use the quasisyntomic topology. Under this scope, using the vanishing result of \cite{BS}, the odd homotopy groups
\begin{equation*}
    \pi_{2i-1} \TR^r (-;\Zp)^{hS^1}, \quad \pi_{2i-1} \TR^r (-;\Zp)
\end{equation*}
vanish locally in the quasisyntomic topology. Notice that for quasiregular semiperfect rings, this vanishing holds without requiring to pass to a suitable quasisyntomic cover.

It follows that, locally for the quasisyntomic topology, we can identify the graded pieces for the motivic filtrations (which come from the double-speed Postnikov filtrations) as:
\begin{equation*}
    \begin{cases}
        \grM^i \TR^r (-;\Zp)^{hS^1} \simeq \nr^{\geq i} \Delc_{(-)} \{ i\} [2i] \\[5pt]
        \grM^i \TR^r (-;\Zp) \simeq \nr^i \Delc_{(-)} \{ i\} [2i] \\[10pt]
        \grM^i \TR (-;\Zp)^{hS^1} \simeq \rlimr \nr^{\geq i} \Delc_{(-)} \{ i\} [2i] = \ninf^{\geq i} \Delc_{(-)} \{ i\} [2i] \\[5pt]
        \grM^i \TR (-;\Zp) \simeq \rlimr \nr^i \Delc_{(-)} \{ i\} [2i] = \ninf^i \Delc_{(-)} \{ i\} [2i]
    \end{cases}
\end{equation*}
The remaining results regarding $\TR$ and its related invariants follow from these identifications.

In analogy with Nygaard-complete prismatic cohomology, which is the one obtained from the homotopy theoretic story, we are able to have similar results for the non-completed version. Given a $p$-complete animated ring, we can define the $r$-Nygaard filtration via the iterated construction:
\begin{equation*}
    \nr^{\geq i} \Del_S \{ i\} := \n^{\geq i} \Del_S \{ i\} \times_{\Del_S \{ i\}} \dots \times_{\Del_S \{ i\}} \n^{\geq i} \Del_S \{ i\}
\end{equation*}
Mapping to $\Del_S \{ i\}$, from the left, gives a divided version of the $r$-th iteration of prismatic Frobenius. This fits in the commutative diagram where the upper row corresponds to the filtered invariant, while the lower to the graded versions:
\begin{equation*}
    \begin{tikzcd}[column sep=huge]
        \nr^{\geq i} \Del_S \{ i\} \arrow[r, "\varphi_{r, i}"] \arrow[d] & \Del_S \{ i\} \arrow[d] \\
        \nr^i \Del_S \{ i\} \arrow[r, "\gr \varphi_{r, i}"] & \Del_S^{\HT, r} \{ i\}
    \end{tikzcd}
\end{equation*}

There is an analogous square in the setting of relative prismatic cohomology. Using generalities regarding the d\'ecalage functor $L\eta_{I_r}$ and the Beilinson t-structure, one is able to construct a conjugate filtration on the relative $r$-Hodge--Tate cohomology $\Fil_{\bullet}^{\conj} \Del_{S/A} \{\bullet\}$. In the case of working over a perfect prism, one obtains an $\F$-$\V$-procomplex, which provides a comparison with the continuous, $p$-complete, relative $r$-truncated de Rham--Witt complex of Langer--Zink. We hope to address this in a more general setting in future work.

\subsection*{Overview of the Paper}
The format of the paper follows the natural succession that we just described.

In particular, in paragraph 2, we recall the main definitions and properties regarding topological Hochschild homology, topological restriction homology, and related invariants. In addition, we provide a quick review of the main results of the second work of Bhatt--Morrow--Scholze \cite{BMS2} regarding the motivic filtrations of $\THH$ and related invariants. Finally, we briefly review the theory of the (derived) de Rham complex, together with its proximity to the plain Hochschild homology via the HKR-filtration, as well as the theory of the de Rham--Witt complex.

In paragraph 3, we gather the calculations of $\TR^r (R_0;\Zp)^{hS^1} \to \TR^r (R_0;\Zp)$, in the case $R_0$ is a perfectoid ring. We provide explicit presentations for their motivic filtrations, which for finite $1 \leq r < \infty$ coincide with the double-speed Postnikov filtration, as well as polynomial presentations of their cohomology rings.

In paragraph 4, we first start with calculations in the case of quasiregular-semiperfectoid rings. As we already described, the even homotopy groups carry the structure relevant to the $r$-Nygaard filtration. Next, we use quasisyntomic descent to pass to the wider class of quasisyntomic rings (and left-Kan extending to go to the case of $p$-complete animated rings). We apply these results to the study of topological cyclic homology of $\TR$, providing an interpolating family of spectra between $\widetilde{\TC} ( \TR )$ and $\TC ( \TR)$.

In paragraph 5, we briefly define certain sheaves on the prismatization stack $\Sigma$ and also define $r$-Hodge--Tate divisors $\Sigma^{\HT, r}$. We introduce the $r$-Nygaard filtered absolute prismatic cohomology and study some of its main properties. Finally, we specialize to the relative situation and also discuss connections with the theory of the de Rham--Witt complex.

In paragraph 6, we specialize to the cases of mixed and positive characteristic, relating to $A\Omega$ and crystalline cohomology, ultimately drawing connections to the beginnings of the prismatic story, as developed in \cite{BMS1}.

Finally, in paragraph 7, we explain ongoing work and possible future directions. This includes the objects $\Sigma^{\HT, r}$ and $\Sigma_r'$, which encode in stacky terms the $r$-Hodge--Tate cohomology and the $r$-Nygaard filtered prismatic cohomology, respectively, the role of the absolute de Rham complex, as well as connections with factorization phenomena.

\subsection*{Conventions} Regarding the theory of $\infty$-categories and higher algebra, we follow standard conventions, as they are presented in Lurie's treatises \cite{HTT, HA}. Regarding the theory of cyclotomic spectra, topological Hochschild homology and topological restriction homology, we treat the $p$-typical case, following the works of Nikolaus--Scholze \cite{NS} and Antieau--Nikolaus \cite{AN}. Finally, regarding the theory of perfectoid rings, prismatic cohomology and how these relate to the theory of $\THH$, we follow the recent works of Bhatt--Morrow--Scholze \cite{BMS1, BMS2}, Bhatt--Scholze \cite{BS}, and Bhatt--Lurie \cite{APC}. In the next section, we briefly recall some of that background.

\subsection*{Acknowledgements}
This article stems from the author's PhD thesis \cite{andriopoulos2024motivic}, which was defended in the Mathematics Department of the University of Chicago, in Spring 2024. The author would like to express his deep gratitude towards his academic advisor, Prof. Sasha Beilinson, for his warmth and kindness, for the countless walks and conversations, and in general, the overall very inspiring interaction throughout the years. Without the crucial support from Sasha, especially during the difficult days of the pandemic, and his commitment to (mathematical) beauty, this work would not have been possible. The author would also like to extend his gratitude towards Prof. Akhil Mathew, for various discussions over the years, regarding the relevant material.

\newpage

\section{Preliminaries}
In this section, we briefly gather some background regarding topological Hochschild homology, topological restriction homology, and other related invariants. We mostly present the $p$-typical aspects of the story, providing references of the integral aspects, for the interested reader. We end with a brief discussion of the work of \cite{BMS2} and other on the motivic filtrations of $\THH$ and $\HH$ related invariants.

\subsection{Cyclotomic spectra and topological Hochschild homology}

The main advantage of the theory of topological Hochschild homology, as opposed to the original story of Hochschild homology, is the existence of the \emph{higher Frobenius} maps. In order to study them, Nikolaus--Scholze \cite{NS} constructed the $\infty$-category of \emph{cyclotomic spectra}; here we remind the reader of the $p$-typical story, the version outlined by Antieau--Nikolaus \cite{AN}.

\begin{defn}[$p$-typical cyclotomic spectra]
    We fix a prime number $p$. A \emph{$p$-typical cyclotomic spectrum} $X$ is a spectrum equipped with an $S^1$-action and an $S^1$-equivariant $\emph{higher Frobenius}$ map to its $C_p$-Tate fixed points $\varphi_p : X \to X^{tC_p}$. These assemble into the presentable, stable $\infty$-category of $\emph{$p$-typical cylotomic spectra}$ $\Cycsp_p$, which is defined to be the lax equalizer of the higher Frobenius and identity maps in $\Sp^{\mathrm{B}S^1}$.
\end{defn}

\begin{rmk}[Comparison to the genuine theory]
    As shown in \cite{NS}, this approach to the theory of cyclotomic spectra is particularly well-behaved in the bounded-below case, where it actually identifies with the more classical theory of genuine cyclotomic spectra.
\end{rmk}

\begin{ex}[Topological Hochschild homology]
    The main example of a $p$-typical cyclotomic spectrum, comes from the theory of \emph{topological Hochschild homology}. Since we are interested in $\THH$ of classical/animated rings, we can restrict ourselves to the following definition. Given a connective, $\einfty$-ring spectrum $A \in \Calgc$, topological Hochschild homology of $A$ is defined to be:
    \begin{equation*}
        \THH (A) := A \otimes_{A \otimes A^{\mathrm{op}}} A
    \end{equation*}
    The $S^1$-action on $\THH (A)$ can be seen via the presentation of the simplicial circle as a push-out:
    \begin{equation*}
        S^1 \simeq * \amalg_{* \amalg *} *
    \end{equation*}

    A geometric approach to topological Hochschild homology can be achieved through the theory of factorization homology and the equivalence:
    \begin{equation*}
        \THH (A) \simeq \int_{S^1} A
    \end{equation*}

    We can associate more invariants to $\THH (A)$, by playing with the $S^1$-action. Its $S^1$-homotopy fixed points constitute the \emph{negative topological cyclic homology} $\TC^- (A)$ and its $S^1$-Tate fixed points constitute the \emph{periodic topological cyclic homology} $\TP (A)$. Between those two, there exists the canonical map $\can : \TC^- (A) \to \TP (A)$ and the associated higher Frobenius $\varphi^{hS^1} : \TC^- (A) \to \TP (A)$.
\end{ex}

\begin{con}[Topological cyclic homology]
    Given a $p$-typical cyclotomic spectrum $X \in \Cycsp_p$, mapping out of the sphere spectrum $\Sph$, which is also an element of $\Cycsp_p$ with trivial $S^1$-action and higher Frobenius, produces \emph{topological cyclic homology}:
    \begin{equation*}
        \TC (X) := \map_{\Cycsp_p} (\Sph, X)
    \end{equation*}

    Using the lax equalizer presentation of $\Cycsp_p$, Nikolaus--Scholze produce a formula for computing topological cyclic homology:
    \begin{equation*}
        \TC (X) \simeq \eq \Big( \can, \varphi^{hS^1}: X^{hS^1} \rightrightarrows (X^{tC_p})^{hS^1}\Big)
    \end{equation*}
    In particular, for $\Zp$-coefficients due to the Tate orbit lemma, we are able to compute the topological cyclic homology of $A$, using $\TC^- (A)$ and $\TP (A)$:
    \begin{equation*}
        \TC (A;\Zp) := \TC (\THH (A;\Zp) ) \simeq \Big( \can,\varphi^{hS^1} : \TC^- (A;\Zp) \rightrightarrows \TP (A;\Zp) \Big)
    \end{equation*}
\end{con}

\subsection{Topological restriction homology}
The classical theory of cyclotomic spectra paves an approach to topological cyclic homology through genuine fixed points, with respect to finite subgroups of $S^1$. Since we restrict to the bounded-below case, genuine fixed points identify with topological restriction homology, as shown in \cite{NS}.

\begin{defn}[Topological restriction homology]
    Given a bounded-below $X \in \Cycsp_p$, we can construct its \emph{$r$-truncated topological restriction homology}, which identifies with the $C_{p^{r-1}}$-genuine fixed points of $X$ and is given by the following iterated pullback formula, for $r \geq 1$:
    \begin{equation*}
        \TR^r (X) := X^{hC_{p^r}} \times_{(X^{tC_p})^{hC^{p^{r-1}}}} X^{hC_{p^{r-1}}} \times_{(X^{tC_p})^{hC_{p^{r-2}}}} \dots \times_{X^{tC_p}} X \simeq X^{C_{p^{r-1}}}
    \end{equation*}
    where the maps on the right are $\varphi^{hC_{p^k}}$, for $0 \leq k \leq r-1$, and the maps on the left are the canonical ones.
    
    In the case $X = \THH (A)$, for some $A \in \Calgc$, we simply write $\TR^r (A) := \TR^r (\THH (A))$.

    There is a deep analogy between topological restriction homology and the theory of Witt vectors. In particular, there exist \emph{Restriction} maps $\Res : \TR^{r+1} (X) \to \TR^r (X)$, for $r \geq 1$ by forgetting the leftmost factor in the iterated pullback presentation, \emph{Frobenius} maps $\F : \TR^{r+1} (X) \to \TR^r (X)$, for $r \geq 1$ by forgetting the rightmost factor together with part of the homotopy fixed points information, and \emph{Verschiebung} maps $\V : \TR^r (X) \to \TR^{r+1} (X)$, for $r \geq 1$ by moving everything one place to the left, applying $(\cdot)^{hC_p}$, and introducing a copy of $X$ in the rightmost place.

    Taking the limit with respect to the Restriction maps yields the \emph{topological restriction homology} of $X$:
    \begin{equation*}
        \TR (X) := \limr \TR^r (X) \simeq \dots \times_{(X^{tC_p})^{hC_{p^r}}} X^{hC_{p^r}} \times_{(X^{tC_p})^{hC^{p^{r-1}}}} \dots \times_{X^{tC_p}} X
    \end{equation*}
    In this case, Frobenius and Verschiebung become endofunctors of $\TR$.
\end{defn}

The relation between topological restriction homology and the Witt vectors becomes explicit via the following theorem of Hesselholt-Madsen:
\begin{thm}[Hesselholt-Madsen \cite{hesselholt1997k}]
    Let $A \in \Calgc$ be a connective $\einfty$-ring spectrum. Then $\TR^r (A)$ is also a connective $\einfty$-ring spectrum. Applying the $0$-th homotopy group functor produces the following equivalence, which maps the structure associated with topological restriction homology to the usual structure associated with the Witt vectors of $\pi_0 A$:
    \begin{equation*}
        \pi_0 \TR^r (A) \simeq \W_r (\pi_0 A)
    \end{equation*}
\end{thm}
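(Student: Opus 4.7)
The plan is to separate the statement into two parts: verifying connectivity of $\TR^r(A)$, and identifying $\pi_0 \TR^r(A)$ with $\W_r(\pi_0 A)$ as a ring equipped with compatible Restriction, Frobenius, and Verschiebung. Both parts proceed by induction on $r$, using the iterated pullback presentation of $\TR^r$ recalled above.

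For connectivity, the base case $\TR^1(A) = \THH(A)$ holds since $\THH$ preserves connectivity on $\Calgc$, with $\pi_0 \THH(A) \simeq \pi_0 A$. For the inductive step, $\TR^{r+1}(A)$ is presented as a pullback with terms $\THH(A)^{hC_{p^r}}$ and $\TR^r(A)$ over a homotopy-fixed-point Tate term. The homotopy fixed point spectral sequence $E_2^{ij} = H^i(C_{p^k}; \pi_j \THH(A)) \Rightarrow \pi_{j-i}(\THH(A)^{hC_{p^k}})$ makes the homotopy fixed points connective, and although the Tate terms are not individually connective, the canonical map and the higher Frobenius both carry the same negative-degree contributions, so the pullback stays connective. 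In parallel, to identify $\pi_0 \TR^r(A)$ I would exploit the universal characterization of Witt vectors: $\W_r(R)$ is the initial commutative ring carrying a multiplicative Teichmüller section from $R$, together with $\Res, \F, \V$ operators relating it to $\W_{r-1}(R)$ and satisfying $\F\V = p$, $\V(a \cdot \F b) = (\V a) \cdot b$, and the Frobenius lift congruence mod $p$. On the $\TR^r$ side, $\Res, \F, \V$ come from the cyclotomic structure recalled above, and the Teichmüller character $\pi_0 A \to \pi_0 \TR^r(A)$ is built from the unit map $A \to \TR^r(A)$ combined with the $\einfty$-ring structure on $\TR^r(A)$.

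This produces a natural comparison map $\W_r(\pi_0 A) \to \pi_0 \TR^r(A)$, which I would verify is an isomorphism by matching ghost components: the $i$-th ghost component on the right is the composite of $i$ applications of $\F$ followed by the canonical projection to $\pi_0 \THH(A) \simeq \pi_0 A$, mirroring the standard Witt-polynomial description of the ghost map on the left. The main obstacle is the rigidity of the Witt ring law, since Witt multiplication is forced by very specific polynomial identities; the cleanest way to establish compatibility is to reduce, by base change along a free presentation $\Sph\{x_1, \ldots, x_n\} \to A$, to the universal case $A = \Sph\{x_1, \ldots, x_n\}$ and ultimately to the computation $\pi_0 \TR^r(\Sph) \simeq \W_r(\Z)$, which itself can be extracted from low-degree equivariant homotopy of the sphere together with the Frobenius-lift structure of its fixed-point tower. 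Once the ring isomorphism with all its $\Res, \F, \V$-compatibilities is checked in the free case, naturality in $A$ propagates it to arbitrary $A \in \Calgc$.
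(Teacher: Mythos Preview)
The paper does not supply a proof of this theorem; it is simply quoted as a result of Hesselholt--Madsen and attributed to \cite{hesselholt1997k}. So there is no ``paper's own proof'' to compare against, and your sketch is an attempt to reprove a cited external result rather than to fill in something the paper does.

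That said, a few remarks on the sketch itself. For connectivity, the sentence ``the canonical map and the higher Frobenius both carry the same negative-degree contributions, so the pullback stays connective'' is not an argument: the two legs of the pullback square come from different spectra, and a pullback of connective spectra is connective only when the induced map on $\pi_0$ is jointly surjective onto $\pi_0$ of the base, which you have not checked. The clean route is to invoke the identification $\TR^r(A) \simeq \THH(A)^{C_{p^{r-1}}}$ with genuine fixed points (stated in the paper just above), since genuine fixed points of a connective genuine $G$-spectrum are connective. For the $\pi_0$ identification, the ``universal characterization'' you state for $\W_r$ is not the standard one and is not obviously correct as written; the actual Hesselholt--Madsen argument uses the tom Dieck splitting of $\pi_0$ of the genuine fixed points of the sphere to produce the ghost coordinates and then checks the Witt addition and multiplication polynomials directly, rather than appealing to an abstract universal property. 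Your reduction-to-$\Sph$ idea points in the right direction, but the step ``$\pi_0 \TR^r(\Sph) \simeq \W_r(\Z)$ can be extracted from low-degree equivariant homotopy of the sphere'' is exactly the heart of the theorem and cannot be left as a throwaway.
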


\begin{rmk}
    An equivalent way to present topological restriction homology is either as an equalizer, for $1 \leq r \leq \infty$:
    \begin{equation*}
        \TR^r (X) \simeq \eq \Bigg( \prod_{0 \leq k \leq r-1} X^{hC_{p^{i-1}}} \rightrightarrows \prod_{1 \leq k \leq r-1 } (X^{tC_p})^{hC_{p^{i-1}}} \Bigg)
    \end{equation*}
    
    or, inductively, via the pullback square, for $r \geq 1$:
    \begin{equation*}
        \begin{tikzcd}[column sep=huge]
            \TR^{r+1} (X) \arrow[r, "\Res"] \arrow[dd] & \TR^r (X) \arrow[dd] \arrow[dr, dotted, bend left = 15] &[-20pt] \\
            & & X^{hC_{p^{r-1}}} \arrow[dl, dotted, bend left = 15, "\varphi^{hC_{p^{r-1}}}"] \\
            X^{hC_{p^r}} \arrow[r, "\can"'] & (X^{tC_p})^{hC_{p^r}} &
        \end{tikzcd}
    \end{equation*}

    For $\Zp$-coefficients, observe that due to Tate orbit lemma \cite{NS}, the $C_{p^k}$-Tate fixed points of $\TR^r (A;\Zp)$ reduce to the following, via the natural projection map to $\THH (A;\Zp)$, for $1 \leq k \leq \infty$:
    \begin{equation*}
        \TR^r (A;\Zp)^{tC_{p^k}} \simeq \THH (A;\Zp)^{tC_{p^k}}
    \end{equation*}
    
    In fact, the iterated pullback formula simplifies to the following, for $1 \leq r \leq \infty$:
    \begin{equation*}
        \TR^r (A;\Zp) \simeq \THH (A;\Zp)^{hC_{p^r}} \times_{\THH (A;\Zp)^{tC_{p^{r-1}}}} \dots \times_{\THH (A;\Zp)^{tC_p}} \THH (A;\Zp)
    \end{equation*}
    There are similar simplifications in ways of presenting $\TR^r (A;\Zp)$.

    Finally, applying $S^1$-homotopy fixed points to the last formula, we have the following very important identification for $\TR^r (A;\Zp)^{hS^1}$, for $1 \leq r \leq \infty$:
    \begin{equation*}
        \TR^r (A;\Zp)^{hS^1} \simeq \TC^- (A;\Zp) \times_{\TP (A;\Zp)} \dots \times_{\TP (A;\Zp)} \TC^- (A;\Zp)
    \end{equation*}
    where the maps on the left are the canonical ones $\can: \TC^- (A;\Zp) \to \TP (A;\Zp)$, while the maps on the right are the $S^1$-homotopy fixed points of higher Frobenius $\varphi^{hS^1} : \TC^- (A;\Zp) \to \TP (A;\Zp)$.

    In particular, we highlight two maps who have $\TR^r (A;\Zp)^{hS^1}$ as their source. In particular, the first one corresponds to the $C_{p^r}$-homotopy fixed points of the higher Frobenius. It is obtained by mapping to $\TC^- (A;\Zp)$ on the left and, afterwards, to $\TP (A;\Zp)$ via the $S^1$-homotopy fixed points of higher Frobenius:
    \begin{equation*}
        \varphi^{hS^1} : \TR^r (A;\Zp)^{hS^1} \longrightarrow \TC^- (A;\Zp) \longrightarrow \TP (A;\Zp)
    \end{equation*}
    And, thus, it fits in the following commutative diagram:
    \begin{equation*}
        \begin{tikzcd}[column sep=huge]
            \TR^r (A;\Zp)^{hS^1} \arrow[r] \arrow[d] & \TC^- (A;\Zp) \arrow[r, "\varphi^{hS^1}"] \arrow[d] & \TP (A;\Zp) \arrow[d] \\
            \TR^r (A;\Zp) \arrow[r] & \THH (A;\Zp)^{hC_{p^{r-1}}} \arrow[r, "\varphi^{hC_{p^{r-1}}}"] & \THH (A;\Zp)^{tC_{p^r}}
        \end{tikzcd}
    \end{equation*}
    In addition to this, we also have a counterpart of the inclusion map, by mapping first to $\TC^- (A;\Zp)$ on the right and then to $\TP (A;\Zp)$ via the inclusion map:
    \begin{equation*}
        \can : \TR^r (A;\Zp)^{hS^1} \longrightarrow \TC^- (A;\Zp) \longrightarrow \TP (A;\Zp)
    \end{equation*}

    Note that the aforementioned description for the $S^1$-homotopy fixed points of $\TR$ can be also given in the form of an equalizer:
    \begin{equation*}
        \TR^r (A;\Zp)^{hS^1} \simeq \eq \Bigg( \prod_{1 \leq k \leq r-1} \TC^- (A;\Zp) \rightrightarrows \prod_{2\leq k \leq r-1} \TP (A;\Zp) \Bigg)
    \end{equation*}
    
    Of course, there exist such presentations in general, without the need to restrict to $\Zp$ coefficients. However, in our study, we mostly restrict to the case of working over $\Zp$.
\end{rmk}

\subsection{Cyclotomic spectra with Frobenius lifts}
If cyclotomic spectra provide a framework for studying the properties of higher Frobenius, then there should be a good notion of Frobenius lift. Such a notion does, in fact, exist and the prime example is given by $\TR (X)$, for $X \in \Cycsp_p$; it happens that Frobenius factors through the $C_p$-homotopy fixed points of $\TR (X)$.

\begin{defn}[$p$-typical cyclotomic spectra with Frobenius lifts]
    A \emph{$p$-typical cyclotomic spectrum with a Frobenius lift} is a spectrum $Y$ with an $S^1$-action and a \emph{Frobenius lift} map to its $C_p$-homotopy fixed points $F_p : Y \to Y^{hC_p}$. The assemble into the presentable, stable $\infty$-category of \emph{$p$-typical cyclotomic spectra with Frobenius lifts} $\Cycsp_p^{\mathrm{Fr}}$, which is defined to be the lax equalizer of the Frobenius lift and identity maps in $\Sp^{\B S^1}$.
\end{defn}

\begin{rmk}
    Any $Y \in \Cycsp_p^{\mathrm{Fr}}$ is also in $\Cycsp_p$, by considering the Frobenius map to be the Frobenius lift, postcomposed with the canonical map to the $C_p$-Tate fixed points:
    \begin{equation*}
        \begin{tikzcd}[column sep=huge]
            \varphi_p = \can \circ \F_p : Y \arrow[r, "\F_p"] & Y^{hC_p} \arrow[r, "\can"] & Y^{tC_p}
        \end{tikzcd}
    \end{equation*}
\end{rmk}

\begin{thm}[$\TR$ is a $p$-typical cyclotomic spectrum with Frobenius lifts, \cite{krause2018lectures}]
    The forgetful functor provided by the previous remark admits a right adjoint, given by $\TR$. Therefore, the $\infty$-categories $\Cycsp_p^{\mathrm{Fr}}$ and $\Cycsp_p$ fit into the following adjunction:
    \begin{equation*}
        \begin{tikzcd}[column sep=huge]
            \Cycsp_p^{\mathrm{Fr}} \arrow[rr, bend left=15, "\mathrm{forg.}"] & \perp & \Cycsp_p \arrow[ll, bend left=15, "\TR"]
        \end{tikzcd}
    \end{equation*}
\end{thm}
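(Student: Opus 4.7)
The plan is to verify the adjunction by exhibiting, for $Y \in \Cycsp_p^{\mathrm{Fr}}$ and $X \in \Cycsp_p$, a natural equivalence $\map_{\Cycsp_p}(\mathrm{forg}(Y), X) \simeq \map_{\Cycsp_p^{\mathrm{Fr}}}(Y, \TR(X))$. Unwinding both sides via the lax equalizer presentations, a morphism $\mathrm{forg}(Y) \to X$ in $\Cycsp_p$ is an $S^1$-equivariant map $f : Y \to X$ together with a homotopy $\varphi_p^X \circ f \simeq f^{tC_p} \circ \varphi_p^Y$; the essential structural input is that $\varphi_p^Y$ factors as $\can \circ F_p^Y$ thanks to the Frobenius lift on $Y$. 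On the other hand, using the iterated pullback description $\TR(X) \simeq \lim_r \TR^r(X)$, an $S^1$-equivariant map $Y \to \TR(X)$ is equivalently a compatible family of $S^1$-equivariant maps $g_n : Y \to X^{hC_{p^n}}$, for $n \geq 0$, together with homotopies $\varphi^{hC_{p^n}} \circ g_n \simeq \can \circ g_{n+1}$ in $(X^{tC_p})^{hC_{p^n}}$.

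Given $f$ as above, I would set $g_n := f^{hC_{p^n}} \circ F_p^n$, where $F_p^n : Y \to Y^{hC_{p^n}}$ is the $n$-fold iterated Frobenius lift of $Y$. Applying $(-)^{hC_{p^n}}$ to the homotopy $\varphi_p^X \circ f \simeq f^{tC_p} \circ \can \circ F_p^Y$, precomposing with $F_p^n$, and invoking the naturality of $\can$ produces precisely the required homotopies $\varphi^{hC_{p^n}} \circ g_n \simeq \can \circ g_{n+1}$. These assemble into a map $\tilde f : Y \to \TR(X)$ in $\Sp^{\B S^1}$, and the shift-of-index endomorphism of the iterated pullback, composed with a canonical projection, endows $\TR(X)$ with a Frobenius lift $\TR(X) \to \TR(X)^{hC_p}$ for which $\tilde f$ is automatically compatible with $F_p^Y$ by construction. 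Conversely, from a $\Cycsp_p^{\mathrm{Fr}}$-morphism $Y \to \TR(X)$, I would project onto the $\TR^1(X) = X$ factor to recover $f$, and read off the $\Cycsp_p$-compatibility witness from the $\TR^2(X)$-component of the pullback, again exploiting $\varphi_p^Y = \can \circ F_p^Y$; mutual inverseness of these two constructions then follows from the universal property of iterated pullbacks.

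The main obstacle is tracking the infinite tower of higher coherences in $\TR(X) = \lim_r \TR^r(X)$ rather than just working at a fixed level. A clean way to organize this is to present $\TR(X)$ as the totalization of an explicit cosimplicial-type diagram in $\Sp^{\B S^1}$ assembled from the $X^{hC_{p^n}}$ and $(X^{tC_p})^{hC_{p^n}}$ along $\varphi^{hC_{p^n}}$ and $\can$, so that the adjunction reduces to a levelwise matching that uses only naturality of $\can$ and the iterated structure of $F_p$. Alternatively, as in Krause--Nikolaus, one can realize both $\Cycsp_p$ and $\Cycsp_p^{\mathrm{Fr}}$ as lax equalizers of parallel endofunctors on $\Sp^{\B S^1}$ related by the natural transformation $\can : (-)^{hC_p} \Rightarrow (-)^{tC_p}$, and then deduce the existence of the right adjoint from a general adjunction principle for lax equalizers, bypassing the explicit unit/counit verification altogether.
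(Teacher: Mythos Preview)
The paper does not prove this theorem: it is stated as background, with attribution to \cite{krause2018lectures}, and no argument is supplied. So there is no ``paper's own proof'' to compare against; your proposal stands on its own.

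Your sketch captures the essential mechanism correctly: the Frobenius lift on $Y$ lets you iterate to produce the tower $g_n = f^{hC_{p^n}} \circ F_p^n$, and the compatibility homotopies come from the interplay between $\can$ and $\varphi$. Two remarks, though. First, the iterated pullback description of $\TR(X)$ that you rely on is only known to hold for bounded-below $X$ (the paper itself states it under that hypothesis), whereas the adjunction is asserted without restriction; so your direct construction of the right adjoint only identifies it with the expected formula on the bounded-below subcategory. The existence of \emph{some} right adjoint is immediate from the adjoint functor theorem, since the forgetful functor between these presentable stable $\infty$-categories preserves colimits. Second, the coherence bookkeeping you flag is genuine: writing down $g_n$ and a single homotopy at each level is not yet a map into the limit, and one really does need either the cosimplicial/tower packaging you allude to or the abstract lax-equalizer argument. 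Your closing alternative---realizing both categories as lax equalizers related by the transformation $\can : (-)^{hC_p} \Rightarrow (-)^{tC_p}$ and invoking a general principle---is in fact the route taken in the cited reference, and is the cleanest way to finish.
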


In fact, there is more to the story, as also the Verschiebung map of $\TR(X)$, for $X \in \Cycsp_p$, seems to factor through the $C_p$-homotopy orbits.

\begin{defn}[$p$-typical topological Cartier modules, \cite{AN}]
    There exists a presentable, stable $\infty$-category $\TCart_p$ of \emph{$p$-typical topological Cartier modules}, an element $M$ of which, is defined to be a spectrum equipped with and $S^1$-action, whose $C_p$-norm has a factorization, as follows:
    \begin{equation*}
        \begin{tikzcd}[column sep=huge]
            M_{hC_p} \arrow[r, "\V"]  \arrow[rr, bend right=25, "\Nm"'] & M \arrow[r, "\F"] & M^{hC^p}
        \end{tikzcd}
    \end{equation*}
\end{defn}

\begin{rmk}
    There exists an obvious chain of forgetful functors:
    \begin{equation*}
        \begin{tikzcd}[column sep=huge]
            \TCart_p \arrow[r, "\text{forg.}"] & \Cycsp_p^{\mathrm{Fr}} \arrow[r, "\text{forg.}"] & \Cycsp_p
        \end{tikzcd}
    \end{equation*}
\end{rmk}

\begin{rmk}
    Let us denote with $( \cdot )/ \V$ the cofiber of the Verschiebung map in $\TCart_p$. Given an $M \in \TCart_p$, consider the spectrum $M/ \V$. Then $M/ \V$ is a cyclotomic spectrum.
\end{rmk}

\begin{thm}[Antieau--Nikolaus adjunction, \cite{AN}]
    The functor $( \cdot )/ \V$ provided by the previous remark admits a right adjoint, given by $\TR$. Therefore, the $\infty$-categories $\TCart_p$ and $\Cycsp_p$ fit into the following adjunction:
    \begin{equation*}
        \begin{tikzcd}[column sep=huge]
            \TCart_p \arrow[rr, bend left=15, "( \cdot )/ \V"] & \perp & \Cycsp_p \arrow[ll, bend left=15, "\TR"]
        \end{tikzcd}
    \end{equation*}
    In fact, there exists a so-called \emph{cyclotomic $t$-structure} on $\Cycsp_p$, for which the functor $\TR$ is $t$-exact. In addition, in the bounded-below case, it is also fully faithful, thus identifying $\Cycsp_p$ with the \emph{$\V$-complete $p$-typical topological Cartier modules} $\widehat{\TCart}_p \subset \TCart_p$.
\end{thm}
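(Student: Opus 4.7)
The plan is to establish the adjunction via the adjoint functor theorem and then identify the right adjoint with $\TR$ by unwinding the iterated pullback description of the latter. Both $\TCart_p$ and $\Cycsp_p$ are presentable stable $\infty$-categories, being lax equalizers in $\Sp^{\B S^1}$, and the functor $(\cdot)/\V$ manifestly preserves colimits, so a right adjoint $R : \Cycsp_p \to \TCart_p$ exists for formal reasons. One first verifies that $M/\V$ does carry a natural cyclotomic structure: the higher Frobenius on $M/\V$ is induced from $\F : M \to M^{hC_p}$ using the defining relation $\F \circ \V = \Nm$, so that the composite $M \to M^{hC_p} \to M^{tC_p}$ factors through the cofiber by $\V$.

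To identify $R$ with $\TR$, I would unwind the universal property of the iterated pullback. A map $M/\V \to X$ in $\Cycsp_p$ corresponds to an $S^1$-equivariant map $f : M \to X$ trivializing $\V$ and intertwining $\F$ with the higher Frobenius $\varphi$. Reading the iterated pullback formula for $\TR^r(X)$ layer by layer, such data inductively refine to compatible maps $M \to X^{hC_{p^{r-1}}}$ commuting with the canonical maps and with the $\varphi^{hC_{p^{r-2}}}$ into $(X^{tC_p})^{hC_{p^{r-2}}}$. Passing to the limit over the Restriction maps produces a map $M \to \TR(X)$ in $\TCart_p$, with the Frobenius and Verschiebung matching by construction; the inverse direction is direct from the universal property of iterated pullbacks.

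For the cyclotomic $t$-structure, I would define $\Cycsp_p^{\geq 0}$ by connectivity of the underlying spectrum, with accessibility following from general results on $t$-structures compatible with lax equalizers of $t$-exact functors. The $t$-exactness of $\TR$ then reduces to controlling the homotopy groups of the iterated pullback, which is where the Tate orbit lemma of \cite{NS} enters: it ensures that the contributions of $(X^{tC_p})^{hC_{p^n}}$ do not drag down the connectivity of $\TR(X)$ in the bounded-below range.

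The main obstacle is full faithfulness of $\TR$ on the bounded-below subcategory, equivalently that the counit $(\TR X)/\V \to X$ is an equivalence. Here I would exploit the explicit action of $\V$ on $\TR(X)$, which shifts each factor of the iterated pullback one place to the left while inserting a new copy of $X$ on the right; taking the cofiber then collapses everything except this newly introduced $X$. A diagram chase combined with another application of the Tate orbit lemma confirms the equivalence. The identification of $\Cycsp_p$ with $\widehat{\TCart}_p$ in the bounded-below range then follows by combining this with the automatic $\V$-completeness of objects in the image of $\TR$, which is visible from the defining limit.
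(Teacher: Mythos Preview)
The paper does not prove this theorem. It appears in Section~2 (Preliminaries) as a cited result from \cite{AN}, stated without argument; the author is simply recording background from Antieau--Nikolaus. There is therefore no ``paper's own proof'' to compare your proposal against.

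Your sketch is a plausible outline of how the result is established in \cite{AN}, and the broad strokes (adjoint functor theorem, identifying the right adjoint via the iterated pullback, Tate orbit lemma for connectivity and the counit computation) are in the right spirit. But several steps are only gestures: the claim that $(\cdot)/\V$ lands in $\Cycsp_p$ requires more care than you indicate (one must check that the induced map $M/\V \to (M/\V)^{tC_p}$ is well-defined and $S^1$-equivariant, not merely that $M \to M^{tC_p}$ factors), and the ``diagram chase'' for the counit is where the real content lies. If you intend to supply a proof here, you should either defer entirely to \cite{AN} as the paper does, or fill in those steps properly.
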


\subsection{Topological cyclic homology}
As we already noted, topological restriction homology served as the initial tool for approaching topological cyclic homology.

\begin{con}[$\TC$ via $\TR$, \cite{NS}]
    Let $X$ be a bounded below $p$-typical cyclotomic spectrum. Then, we have the following equivalence, for $r \geq 1$:
    \begin{align*}
        \TC_r (X):= \fib \big(\Res - \F: \TR^{r+1} (X) \longrightarrow \TR^r (X) \big) \simeq \\ \fib \Big( \can - \varphi^{hC_{p^{r+1}}} : X^{hC_{p^{r+1}}} \longrightarrow (X^{tC_p})^{hC_{p^r}} \Big)
    \end{align*}
    It follows that, over $\Zp$-coefficients, the following holds:
    \begin{equation*}
        \TC(X) \simeq \limr \TC_r (X) \simeq \fib \big( 1 -\F: \TR (X) \longrightarrow \TR (X) \big)
    \end{equation*}
\end{con}

Therefore, understanding $\TR$ could possibly lead to a better understanding of $\TC$. In addition, $\TR$ has the structure of a $p$-typical cyclotomic spectrum with a Frobenius lift, and as a result via the forgetful functor, the structure of a $p$-typical cyclotomic spectrum. Thus, trying to understand $\TC ( \TR )$ would also be an interesting goal, in its own sake.

\begin{defn}[Topological cyclic homology $\widetilde{\TC}$ for $\Cycsp_p^{\mathrm{Fr}}$]
    Given a $p$-typical cyclotomic spectrum with a Frobenius lift $Y \in \Cycsp_p^{\mathrm{Fr}}$, mapping out of the sphere spectrum $\Sph$, which is also an element of $\Cycsp_p^{\mathrm{Fr}}$ with trivial $S^1$-action and Frobenius lift, produces \emph{topological cyclic homology for Frobenius lifts}:
    \begin{equation*}
        \widetilde{\TC} (Y) := \map_{\Cycsp_p^{\mathrm{Fr}}} (\Sph, Y)
    \end{equation*}
\end{defn}

\begin{con}[Computing $\widetilde{\TC}$ for $\Cycsp_p^{\mathrm{Fr}}$] \label{topological cyclic homology}
    In complete analogy with the standard $\TC$, we can use the lax equalizer approach to defining $\Cycsp_p^{\mathrm{Fr}}$, in order to have a formula for computing topological cyclic homology for Frobenius lifts:
    \begin{equation*}
        \widetilde{\TC} (Y) \simeq \eq \Big( \id, \F^{hS^1} : Y^{hS^1} \rightrightarrows Y^{hS^1} \Big)
    \end{equation*}
\end{con}
\begin{proof}
    The process is similar to the one in \cite[Prop. II.1.9]{NS}, for $\TC$.
\end{proof}

The natural question to ask is what happens in the case of the standard $\TC$ of $p$-typical cyclotomic spectra with Frobenius lifts.
\begin{con}[Computing $\TC$ of $\TR (A;\Zp)$]
    Let $A$ be a connective $\einfty$-ring spectrum and consider $\TR (A;\Zp)$. Then, the following equivalences hold, regarding topological cyclic homology:
    
    \begin{equation*}
        \widetilde{\TC} \Big( \TR (A;\Zp) \Big) \simeq \limr \widetilde{\TC}^r (A;\Zp)
    \end{equation*}
    for the spectra
    \begin{equation*}
        \widetilde{\TC}^r (A;\Zp) := \fib \Big( \Res^{hS^1} - \F^{hS^1} : \TR^r (A;\Zp)^{hS^1} \longrightarrow \TR^{r-1} (A;\Zp)^{hS^1} \Big)
    \end{equation*}
    
    Mapping further to the Tate constructions, we get that:
    \begin{equation*}
        \TC \Big( \TR (A;\Zp) \Big) \simeq \Big( 1 - \F^{hS^1} : \TR (A;\Zp)^{hS^1} \longrightarrow \TP (A;\Zp) \Big) \simeq \limr \TC^r (A;\Zp)
    \end{equation*}
    for the spectra:
    \begin{gather*}
        \TC^r (A;\Zp) := \fib \big( \Res^{hS^1} - \F^{hS^1} : \TR^r (A;\Zp)^{hS^1} \longrightarrow \TR^{r-1} (A;\Zp)^{hS^1} \longrightarrow \TP (A;\Zp) \Big) \\
        \simeq \fib \Big( \can - \varphi^{hS^1} : \TR^r (A;\Zp)^{hS^1} \longrightarrow \TC^- (A;\Zp) \longrightarrow \TP (A;\Zp) \Big)
    \end{gather*}
    Note that $\TC^r (A;\Zp)$ interpolates between $\TC (A;\Zp)$, for $r=1$, and $\TC \big( \TR (A;\Zp) \big)$, for $r=\infty$.
\end{con}
\begin{proof}
    These essentially follow from the commutativity of the diagrams, below. The diagrams on the left correspond to the action of the Restriction operators above/ Frobenius operators below, for $\TR^r (A;\Zp)$. Applying $S^1$-homotopy fixed points, we obtained the diagrams on the right.
    \begin{equation*}
        \begin{tikzcd}
            \TR^{r+1} (A;\Zp) \arrow[r, "\Res"] \arrow[d] &[10pt] \TR^r (A;\Zp) \arrow[d] &[20pt]
            \TR^{r+1} (A;\Zp)^{hS^1} \arrow[r, "\Res^{hS^1}"] \arrow[d] &[10pt] \TR^r (A;\Zp)^{hS^1} \arrow[d] \\
            \THH (A;\Zp)^{hC_{p^r}} \arrow[r, "\can"] & \THH (A;\Zp)^{tC_{p^r}} &
            \TC^- (A;\Zp) \arrow[r, "\can"] & \TP (A;\Zp) \\[10pt]
            \TR^{r+1} (A;\Zp) \arrow[r, "\F"] \arrow[d] & \TR^r (A;\Zp) \arrow[d] &
            \TR^{r+1} (A;\Zp)^{hS^1} \arrow[r, "\F^{hS^1}"] \arrow[d] & \TR^r (A;\Zp)^{hS^1} \arrow[d] \\
            \THH (A;\Zp)^{hC_{p^r}} \arrow[r, dotted] \arrow[d, "\varphi^{hC_{p^r}}"] & \THH (A;\Zp)^{tC_{p^r}} & \TC^- (A;\Zp) \arrow[r, "\varphi^{hS^1}"] & \TP (A;\Zp) \\
            \THH (A;\Zp)^{tC_{p^{r+1}}} \arrow[ur, "\mathrm{proj.}"]
        \end{tikzcd}
    \end{equation*}

Taking homotopy fibres, it follows that:
\begin{gather*}
    \TC^r (A;\Zp) := \fib \Big( \Res^{hS^1} - \F^{hS^1} : \TR^r (A;\Zp)^{hS^1} \longrightarrow \TR^{r-1} (A;\Zp)^{hS^1} \longrightarrow \TP (A;\Zp) \Big) \\
    \simeq \fib \Big( \can - \varphi^{hS^1} : \TR^r (A;\Zp)^{hS^1} \longrightarrow \TC^- (A;\Zp) \longrightarrow \TP (A;\Zp) \Big)
\end{gather*}
\end{proof}

We have only presented the $p$-typical story, which is going to suffice for the majority of this work, except for the last part of section $4$. For the reader who wishes to view the integral statements, we direct them to works such as \cite{NS}, \cite{krause2018lectures}, \cite{krause2023polygonic}, \cite{mccandless2021curves}.

Let us finally note that one of the reasons for the importance of the theory of topological cyclic homology is its proximity to algebraic $\K$-theory, as a result of the theory of traces and the story of curves in $\K$-theory.

\subsection{Motivic filtrations following \cite{BMS2}}
For the full approach to prismatic cohomology, via the theory of $\THH$, as well as for the basics of quasiregular-semiperfectoid rings and quasisyntomic descent, we direct the reader to \cite{BMS2}. A brief recount can also be found in \cite{AMMN}. Here, we provide a reminder on the basics of quasisyntomic descent and focus on the main commutative square involving invariants of $\THH$, which gathers the substance of \cite{BMS2} ideas.

\begin{defn}[Quasisyntomic rings and quasisyntomic topology, {\cite[Sec. 4]{BMS2}}]
    1) We call a $p$-complete ring, $R$ \emph{quasisyntomic}, if it has bounded $p^{\infty}$ torsion and its cotangent complex $\mathbb{L}_{A/\Zp}$ has $p$-complete Tor amplitude concentrated in $[-1,0]$. The category of quasisyntomic rings is denoted by $\mathrm{QSyn}$.\\
    2) A map of $p$-complete rings is called a \emph{quasisyntomic map/quasisyntomic cover} if $B$ is $p$-completely flat/faithfully flat over $A$ and the cotangent complex $\mathbb{L}_{B/A}$ has $p$-complete Tor amplitude concentrated in $[-1,0]$.\\
    3) The category $\mathrm{QSyn}^{\mathrm{op}}$ obtains the structure of a site, when equipped with the quasisyntomic covers. We call this the \emph{quasisyntomic site}.
\end{defn}

Let us recall some important, special cases of quasisyntomic rings, the first of which is the class of perfectoid rings.

\begin{defn}[Perfectoid rings {\cite[Sec. 3]{BMS1}}]
    A ring $R_0$ is called \emph{perfectoid} if and only if it is $\pi$-adically complete for some element $\pi \in R_0$, for which $\pi^p$ divides $p$, the Frobenius map $\varphi: R_0/p \to R_0/p$ is surjective, and the kernel of Fontaine's map $\vartheta: \ainf (R_0) \to R_0$ is principal, where $\ainf (R_0) := \W (R_0^{\flat})$.
\end{defn}

Certain quotients of perfectoid rings are also included in the category of quasisyntomic rings:

\begin{defn}[Quasiregular-semiperfectoid rings, following \cite{BMS2}]
    1) We call a ring $S$ \emph{quasiregular-semiperfectoid} if it is quasisyntomic, has a perfectoid base $R_0 \to S$ and its reduction $S/p$ is semiperfect.\\
    2) We denote by $\mathrm{QRSPerfd}$ the category of quasiregular-semiperfectoid rings. In particular, $\mathrm{QRSPerfd}^{\mathrm{op}}$ becomes a site, when equipped with quasisyntomic covers.
\end{defn}

The significance of quasiregular-semiperfectoid rings stems from the fact that on the one hand are quite computable, while on the other, they provide a basis for the quasisyntomic topology:
\begin{prop}[Quasisyntomic descent, following \cite{BMS2}]
    The natural map:
    \begin{equation*}
        u : \mathrm{QRSPerfd}^{\mathrm{op}} \longrightarrow \mathrm{QSyn}^{\mathrm{op}}
    \end{equation*}
    identifies the class of quasiregular-semiperfectoid rings as a basis for the quasisyntomic topology.
\end{prop}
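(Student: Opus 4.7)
The proposition requires verifying two conditions: (i) every $R \in \mathrm{QSyn}$ admits a quasisyntomic cover $R \to S$ with $S \in \mathrm{QRSPerfd}$, and (ii) the class $\mathrm{QRSPerfd}$ is closed under the $p$-completed tensor products over $R$ which form the Čech nerve of such a cover inside $\mathrm{QSyn}$. Together, these give $\mathrm{QRSPerfd}^{\mathrm{op}}$ the structure of a basis for the quasisyntomic topology on $\mathrm{QSyn}^{\mathrm{op}}$, in particular making restriction to quasiregular-semiperfectoid rings fully faithful on sheaves.

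For (i), the plan is to build $S$ by a perfection construction. After first enlarging $R$ along a quasisyntomic cover by a suitable perfectoid ring $A$ (so that $R$ acquires an $A$-algebra structure), choose a surjection $P_0 := A[x_i]_{i \in I} \twoheadrightarrow R$ from a polynomial $A$-algebra on a set of generators, and let $P_\infty$ denote the $p$-adic completion of $A[x_i^{1/p^\infty}]_{i \in I}$, which is again perfectoid. Set $S := R \widehat{\otimes}^L_{P_0} P_\infty$. Since $P_0 \to P_\infty$ is $p$-completely faithfully flat with $p$-completely vanishing relative cotangent complex (both properties following from ind-adjoining $p$-power roots one at a time), base-changing yields the same properties for $R \to S$, and the transitivity triangle for $\Zp \to R \to S$ forces $\mathbb{L}_{S/\Zp}$ to have $p$-complete Tor amplitude in $[-1,0]$. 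Hence $R \to S$ is a quasisyntomic cover; the perfectoid base $P_\infty \to S$ is manifest by construction, and $S/p$ is a quotient of the semiperfect ring $P_\infty/p$, so $S \in \mathrm{QRSPerfd}$.

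For (ii), given two quasisyntomic covers $R \to S_1, S_2$ with $S_1, S_2 \in \mathrm{QRSPerfd}$, the $p$-completed tensor product $S_1 \widehat{\otimes}^L_R S_2$ remains quasisyntomic over $R$ by base change and transitivity; its reduction modulo $p$ is a tensor product of semiperfect rings, hence semiperfect; and a perfectoid base is supplied by the $p$-completed tensor product of the perfectoid bases of $S_1$ and $S_2$ over any common perfectoid ring under both of them. Hence $S_1 \widehat{\otimes}^L_R S_2 \in \mathrm{QRSPerfd}$, and inductively the same applies to all higher terms of the Čech nerve of $R \to S$.

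The main technical hurdle will be the careful bookkeeping between derived and discrete tensor products after $p$-completion at each stage. Specifically, one must verify that the derived tensor products above are concentrated in degree zero, so that $S$ and the terms in the Čech nerve are genuine commutative rings in $\mathrm{QRSPerfd}$ rather than merely animated objects; this is controlled precisely by the $p$-complete Tor-amplitude hypothesis built into the definition of $\mathrm{QSyn}$, which forces the relevant maps to be $p$-completely flat in a sufficiently strong sense. Once this flatness input is secured, the verification of (i) and (ii) reduces to standard tracking of cotangent complexes through base change and transitivity triangles, completing the argument that $\mathrm{QRSPerfd}$ forms a basis for the quasisyntomic topology.
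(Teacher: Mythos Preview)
The paper does not give a proof of this proposition at all: it is stated as background material and immediately followed by ``For further details, the interested reader is directed to \cite[Sect.~4]{BMS2}.'' Your sketch is essentially the argument from \cite[Lemmas~4.28--4.30]{BMS2}, which is the intended reference, so in that sense you have reproduced what the paper implicitly points to.

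Two small points of over-complication. In~(i), the preliminary step of ``enlarging $R$ along a quasisyntomic cover by a suitable perfectoid ring $A$'' is not needed: since $\Zp$ is already perfectoid, you may take $A = \Zp$ from the start, set $P_0 = \Zp[x_i]_{i\in I} \twoheadrightarrow R$, and proceed exactly as you describe. In~(ii), you do not need a \emph{common} perfectoid ring under both $S_1$ and $S_2$: any perfectoid $R_0 \to S_1$ already maps to $S_1 \widehat{\otimes}_R S_2$, which suffices for the definition of quasiregular-semiperfectoid. Your identification of the discreteness issue (that the derived $p$-completed tensor products land in degree zero, controlled by the $p$-complete flatness built into the definition of quasisyntomic maps) is correct and is exactly the content of \cite[Lemma~4.7, Lemma~4.30]{BMS2}.
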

For further details, the interested reader is directed to \cite[Sect. 4]{BMS2}.

Let $S$ be a quasisyntomic ring, for which we consider the following commutative square of \cite{BMS2}:
\begin{equation*}
    \begin{tikzcd}[column sep=huge]
        \TC^- (S;\Zp) \arrow[r, "\varphi^{hS^1}"] \arrow[d] & \TP (S;\Zp) \arrow[d] \\
        \THH (S;\Zp) \arrow[r, "\varphi"'] & \THH (S;\Zp)^{tC_p}
    \end{tikzcd}
\end{equation*}

The invariants of this square are equipped with the complete, exhaustive, decreasing, multiplicative, $\Z$-indexed \emph{motivic filtrations}, which induce associated \emph{motivic spectral sequences}. In the case $S$ is a quasiregular-semiperfectoid ring, the filtrations are nothing but the double-speed Postnikov filtrations; in fact, the general case follows from this, via quasisyntomic descent. Passing to $i$-th graded pieces for the motivic filtrations, we obtain the following commutative square:
\begin{equation*}
    \begin{tikzcd}[column sep=huge]
        \n^{\geq i} \Delc_S \{ i\} [2i] \arrow[r, "\varphi_i", "\text{divided Frob.}"'] \arrow[d] &[1in] \Delc_S \{ i\} [2i] \arrow[d] \\
        \n^i \Delc_S [2i] \arrow[r, "\gr \varphi_i", "\text{graded Frob.}"'] & \overline{\Del}_S \{ i\} [2i]
    \end{tikzcd}
\end{equation*}
In this $\THH$-approach to prismatic cohomology, all invariants happen to be complete, with respect to the \emph{Nygaard filtration}. This is a filtration detected by $\TC^- (S;\Zp)$, as a result of the $S^1$-homotopy fixed points spectral sequence: On the upper row, we have the divided prismatic Frobenius $\varphi_i$ which takes the $i$-th \emph{Nygaard filtered} piece $\n^{\geq i} \Delc_S$ to $\Delc_A$. On the lower row, we have the graded counterpart of the aforementioned situation, where the $n$-th graded piece of the Nygaard filtration $\n^i \Delc_S$ maps to the \emph{Hodge-Tate cohomology} $\overline{\Del}_S$.

For $S$ a $p$-complete, animated ring, such a diagram also arises in the non-Nygaard complete case of absolute prismatic cohomology:
\begin{equation*}
    \begin{tikzcd}[column sep=huge]
        \n^{\geq i} \Del_S \{ i\} \arrow[r, "\varphi_i"] \arrow[d] & \Del_S \{ i\} \arrow[d] \\
        \n^i \Del_S \{ i\} \arrow[r] & \overline{\Del}_S \{ i\}
    \end{tikzcd}
\end{equation*}
Let us move to the relative case. Given a bouned prism $(A,I)$ and assuming that $S$ is an $\overline{A}$-algebra, it follows that we can describe the relative divided prismatic Frobenius as $\varphi = I^i \varphi_i$. The $i$-th associated graded piece gets identified with the $i$-th filtered piece of the \emph{relative conjugate filtration} on the relative Hodge--Tate cohomology:
\begin{equation*}
    \begin{tikzcd}
        \n^i \varphi_A^* \Del_{S/A} \{ i\} \arrow[r, "\simeq"] & \Fil_i^{\conj} \overline{\Del}_{S/A} \{ i\}
    \end{tikzcd}
\end{equation*}
and thus the map to $\overline{\Del}_S \{ i\}$ gets identified with the canonical inclusion. The graded pieces of the conjugate filtration give rise to the relative Hodge--Tate comparison:
\begin{equation*}
    \gr_i^{\conj} \overline{\Del}_{S/A} \{ i\} \simeq \gr_{\hod}^i \dr_{S/\overline{A}} 
\end{equation*}
where the invariant on the right is the $p$-complete version of the relative derived de Rham complex, equipped with its Hodge filtration.

Taking homotopy fibres, one can have an explicit formula calculating the associated graded pieces for the motivic filtration of topological cyclic homology:
\begin{equation*}
    \grM^i \TC (A;\Zp) \simeq \fib \Big( \can - \varphi_i : \n^{\geq i} \Delc_S \{ i\} \longrightarrow \Delc_S \{ i\} \Big)
\end{equation*}

The starting point for the calculations of \cite{BMS2} is the case when $R=R_0$ a perfectoid ring. Let us recall this, as this is essentially the basis for our discussions in the next section. In particular, passing to even homotopy groups, the commutative square gives rise to:
\begin{equation*}
    \begin{tikzcd}[row sep=huge]
        \ainf (R_0) [u,v] / (u v - \xi) \arrow[r, "u \mapsto \sigma \text{, } v \mapsto \widetilde{\xi} \sigma^{-1}", "\varphi \text{-linear (divided Frob.)}"'] \arrow[d, "u \mapsto u \text{, } v \mapsto 0", "\vartheta \text{-linear}"'] &[1.5in] \ainf (R_0) [\sigma, \sigma^{-1}] \arrow[d, "\sigma \mapsto \sigma", "\widetilde{\vartheta} \text{-linear}"'] \\
        R_0 [u] \arrow[r, "u \mapsto \sigma", "R_0 \text{-linear}"'] & R_0 [\sigma, \sigma^{-1}]
    \end{tikzcd}
\end{equation*}

\subsection{Absolute prismatic cohomology via prismatization stacks}

The algebro-geometric path to the invariants of absolute prismatic cohomology has been paved via the prismatization stacks, constructed invependently by Bhatt-Lurie \cite{APC, APC2, APC3} and Drinfeld \cite{Prismatization}. The notation regarding the prismatization stack varies - it has been denoted by $\wcart$, $(\spf \Zp)^{\Del}$ or $\Sigma$. In this article, we use variations of the latter, for simplicity.

\begin{defn}[Prismatization]
    The prismatization stack $\Sigma$ is defined as the fpqc sheaf in (animated) $p$-nilpotent rings, whose $R$ points is defined to be the Cartier--Witt divisors on $R$, \cite[Def. 3.1.4]{APC}. If we have a fixed $p$-adic formal scheme $X$, then its prismatization $\Sigma_X$ is defined to be the sheaf, whose $R$-points consists of the maps $X \to \spf W(R)/I$, where $I \to W(R)$ is a Cartier--Witt divisor. There is a canonical projection:
    \begin{equation}
        \pi: \Sigma_X \longrightarrow \Sigma
    \end{equation}

    Given a prism $(A,I)$, one can construct a map of sheaves, associated with it, \cite[Con. 3.2.4]{APC}:
    \begin{equation*}
        \rho_A : \spf A \longrightarrow \Sigma
    \end{equation*}
    Under this map, we have the following description of the derived $\infty$-category on $\Sigma$, \cite[Prop. 3.3.5]{APC}:
    \begin{equation*}
        \begin{tikzcd}
            \mathcal{D} (\Sigma) \arrow[r, "\simeq"] & \lim_{(A,I)} \widehat{\mathcal{D}} (A)
        \end{tikzcd}
    \end{equation*}
    Using this equivalence, one can consider certain sheaves on $\Sigma$:
    \begin{itemize}
        \item The structure sheaf $\mathcal{O}_{\Sigma}$, as the unit object in $\mathcal{D}(\Sigma)$
        
        \item The ideal sheaf $\mathcal{I} \to \mathcal{O}_{\Sigma}$

        \item The Breuil--Kisin twist $\mathcal{O}_{\Sigma} \{ \bullet\}$

        \item Given an animated ring $S$, the prismatic sheaf $\mathcal{H}_{\Del} (S) \{ \bullet\}$
    \end{itemize}
    It happens that $\mathcal{D} (\Sigma)$ is generated, under shifts and colimits,  by the invertible sheaves $\mathcal{I}^n$, $n \in \Z$, \cite[Cor. 3.5.16]{APC}.
    
    Taking global sections over $\Sigma$ for these sheaves yields the associated structures in the absolute prismatic theory. This is denoted as follows: $\Del_S \{ \bullet\}$ for the absolute prismatic cohomology, $\Del_S^{\HT} \{ \bullet\}$ or $\overline{\Del}_S \{ \bullet\}$ for the absolute Hodge--Tate cohomology, and $I \Del_S$ for the structure that comes from the Hodge--Tate ideal sheaf $\mathcal{I}$.

    Note that part of the essence in using the stacky approach to prismatic cohomology, lies in the fact that for sufficiently nice $S$, the following equivalence holds, \cite[Cor. 8.17]{APC2}:
    \begin{equation*}
        \pi_* \mathcal{O}_{\Sigma_{\spf S}} \simeq \mathcal{H}_{\Del} (S)
    \end{equation*}
\end{defn}

An important part of the geometry of $\Sigma$ is controlled by its closed substack $\Sigma^{\HT}$, which encodes the Hodge--Tate cohomology.

\begin{defn}[Hodge--Tate divisor]
    We denote by $\Sigma^{\HT}$ the Hodge--Tate divisor of $\Sigma$, which is obtained as the closed substack associated with the invertible ideal sheaf $\mathcal{I} \in \mathcal{D}(\Sigma)$. Its $R$ points can be identified with those Cartier--Witt divisors $I \to \W (R)$, for which the following composition vanishes:
    \begin{equation*}
        \begin{tikzcd}
            I \arrow[r] \arrow[rr, bend right=25, "\simeq\; 0" description] & \W (R) \arrow[r] & W(R)/ VW(R) \simeq R
        \end{tikzcd}
    \end{equation*}

    Given a prism $(A,I)$, this defines a map $\rho_A^{\HT}$ to $\Sigma^{\HT}$, which fits in the following pullback square:
    \begin{equation*}
        \begin{tikzcd}[column sep=huge]
            \spf A/I \arrow[r, "\rho_A^{\HT}"] \arrow[d, hook] & \Sigma^{\HT} \arrow[d, hook] \\
            \spf A \arrow[r, "\rho_A"] & \Sigma
        \end{tikzcd}
    \end{equation*}
    Restricting the sheaves from the prismatization stack to the Hodge--Tate divisor, under the canonical inclusion $\iota: \Sigma^{\HT} \hookrightarrow \Sigma$, we have that $\mathcal{D} (\Sigma^{\HT})$ is generated, under shifts and colimits, by the Breuil--Kissin twists $\mathcal{O}_{\Sigma^{\HT}} \{ n\}$, $n \geq 0$, \cite[Prop. 3.5.15]{APC}.

    Using the prismatic maps to the Hodge--Tate divisor, one can construct a Hodge--Tate sheaf $\mathcal{H}_{\Del^{\HT}} (S) \{ \bullet\}$ on $\Sigma^{\HT}$. It happens that it can be identified with the restriction of the prismatic sheaf on the Hodge--Tate divisor:
    \begin{equation*}
        \mathcal{H}_{\Del} (S) \big|_{\Sigma^{\HT}} \simeq \mathcal{H}_{\Del^{\HT}} (S)
    \end{equation*}

    Again, the prism map constructions equip the Hodge--Tate cohomology sheaf with a multiplicative, increasing exhaustive conjugate filtration $\Fil_{\bullet}^{\conj} \mathcal{H}_{\Del^{\HT}} (S) \{ \bullet\}$. Note that taking the global sections yields the absolute conjugate filtration on the absolute Hodge--Tate cohomology $\Fil_{\bullet}^{\conj} \Del_S^{\HT} \{ \bullet \}$. Passing to the associated graded pieces, the Hodge--Tate comparison theorem yields the following identification of sheaves on $\Sigma^{\HT}$:
    \begin{equation*}
        \gr_i^{\conj} \mathcal{H}_{\Del^{\HT}} (S) \{ i\} \simeq L\Omega_S^i \otimes \mathcal{O}_{\Sigma^{\HT}} [-i]
    \end{equation*}
\end{defn}

One of the advantages of using the prismatization stack, is that it comes with a version of Frobenius, which we denote by $\Phi: \Sigma \to \Sigma$. On the absolute prismatic cohomology sheaves, equipped with the Nygaard filtration on the Frobenius pullback, it induces the following natural map:
\begin{equation*}
    \begin{tikzcd}
        \n^{\geq i} \Phi^* \mathcal{H}_{\Del} (S) \{ i\} \arrow[r, "\simeq"] & \mathcal{I}^i \otimes \mathcal{H}_{\Del} (S) \{ i\}
    \end{tikzcd}
\end{equation*}
The existence of a prismatic Frobenius map on Nygaard-filtered prismatic cohomology follows:
\begin{equation*}
    \varphi: \n^{\geq i} \Del_S \{ i\} \longrightarrow I^i \Del_S \{ i\}
\end{equation*}
In turn, this induces a divided prismatic Frobenius map:
\begin{equation*}
    \varphi_i : \n^{\geq i} \Del_S \{ i\} \longrightarrow \Del_S \{ i\}
\end{equation*}



\subsection{The de Rham--Witt complex and the HKR-filtration on Hochschild homology}

The invariants that play the most important role in the prismatic theory are the algebraic K\"ahler differentials/the algebraic de Rham complex. The interested reader is directed to \cite{APC}. In brief, given an $A$-algebra $B$, one is able to construct the relative K\"ahler differentials $\Omega_{B/A}^1$ by hand, via generators and relations. Taking wedge powers $\Omega_{B/A}^i :=\bigwedge^i \Omega_{B/A}^1$, one is able to form the de Rham complex:
\begin{equation*}
    \begin{tikzcd}
        0 \arrow[r] & B \arrow[r] & \Omega_{B/A}^1 \arrow[r] & \dots \arrow[r] & \Omega_{B/A}^i \arrow[r] & \dots
    \end{tikzcd}
\end{equation*}
which is equipped with the Hodge filtration.

By taking the animated version of this construction, one is able to consider the Hodge filtered derived de Rham complex $\Fil_{\hod}^{\bullet} \dr_{B/A}$, which we also sometimes denote by $L\Omega_{B/A}^{\geq \bullet}$. Even though such constructions work in the absolute case, with integral coefficients, in this article we only consider the relative case, with $p$-typical/$p$-complete coefficients.

The theory of the de Rham complex has natural generalizations in the context of mixed characteristic. The original construction is due to Bloch--Deligne--Illusie, for inputs of positive characteristic. However, there exist more recent constructions, which work in greater generality, in the $p$-typical or integral settings.

The relative version is due to Langer--Zink \cite{LangerZink} and works for a pair of $\Z_{(p)}$-algebras $A \to B$, producing a so-called \emph{$F$-$\V$-procomplex} $W_r \Omega_{B/A}$, for $r \geq 1$. It is this version which has been shown to give rise to a de Rham--Witt comparison theorem for prismatic cohomology \cite{BMS1, molokov2020prismatic}.

On the other hand, Hesselholt--Madsen have constructed an absolute de Rham--Witt complex, defined both in the $p$-typical and integral settings (big de Rham--Witt complex). This relies on the notion of a \emph{Witt complex} and is denoted by $W_r\Omega$, \cite{hesselholt2015big}. It has been studied extensively and shown to be intricately connected to $\TR^r$, \cite{hesselholt1996p, hesselholt1997k, hesselholt2003k, hesselholt2004rham, hesselholt2005absolute, hesselholt2015big, AN}. In the remainder of this article, we denote the animated versions of the Hodge-filtered de Rham--Witt complex (absolute or relative) by $\Fil_{\hod}^{\bullet} \drw_{r, -}$.

The above connection between Hochschild-style invariants and de Rham--Witt complexes is well understood in the case $r=1$, under the Hochschild--Kostant--Rosenberg theorem, which involves plain Hochschild homology and the derived de Rham complex.

As shown in the works of  Antieau \cite{AntieauDerived}, Bhatt-Morrow-Scholze \cite{BMS2}, Moulinos--Robalo--To\"en \cite{moulinos2022universal}, and Raksit \cite{Raksit}, the invariants that are associated with Hochschild homology are equipped with integral motivic filtrations, whose graded pieces can be computed in terms of the Hodge-completed derived de Rham complex.

\begin{thm}[HKR filtration] \label{filtered circle}
    The following invariants associated with Hochschild homology are equipped with integral, complete, exhaustive, decreasing, multiplicative, $\Z$-indexed motivic filtrations, for an animated ring $A$. Passing to their associated graded pieces, these can be expressed in terms of the Hodge-completed derived de Rham complex:
    \begin{gather*}
        \grM^n \HH (A) \simeq \wedge^n \mathbb{L}_A [2n] \\
        \grM^n \HC^- (A) \simeq \widehat{\dr}_A^{\geq n} [2n] \\
        \grM^n \HP (A) \simeq \widehat{\dr}_A [2n] \\
        \grM^n \HC (A) [1] \simeq \widehat{\dr}_A / \widehat{\dr}_A^{\geq n} [2n-1]
    \end{gather*}
\end{thm}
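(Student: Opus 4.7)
Plan: I would follow the approach of Antieau and BMS2 (or equivalently, Moulinos--Robalo--To\"en and Raksit via the filtered circle): define the filtration first on polynomial $\Z$-algebras using a Postnikov-type construction, then left Kan extend to all animated rings. For a smooth $\Z$-algebra $A$, the classical HKR theorem gives $\pi_n \HH(A) \simeq \Omega^n_{A/\Z}$; with the appropriate reindexing that places $\Omega^n$ in homological degree $2n$, the double-speed Postnikov filtration on $\HH(A)$ has $n$-th graded piece $\Omega^n_{A/\Z}[2n]$. Taking $\FilM^{\geq \bullet} \HH(-)$ to be the left Kan extension from polynomial $\Z$-algebras to all animated rings, the $n$-th graded piece automatically becomes $\wedge^n \mathbb{L}_{(-)}[2n]$, where $\wedge^n \mathbb{L}$ is the non-abelian derived functor of $R \mapsto \Omega^n_{R/\Z}$.

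For the $S^1$-invariant variants, using the filtered circle $S^1_{\mathrm{fil}}$ of Moulinos--Robalo--To\"en (or Raksit's filtered derived-algebra framework), the filtration on $\HH$ is $S^1$-equivariant at the filtered level, so one may set
\begin{equation*}
\FilM^{\geq n} \HC^-(A) := \bigl(\FilM^{\geq n} \HH(A)\bigr)^{hS^1}, \qquad \FilM^{\geq n} \HP(A) := \bigl(\FilM^{\geq n} \HH(A)\bigr)^{tS^1},
\end{equation*}
and similarly for $\HC$ via homotopy orbits, accounting for the shift by $[1]$. The key observation is that the $S^1$-action on each graded piece $\grM^n \HH(A) \simeq \wedge^n \mathbb{L}_A[2n]$ is \emph{trivial} -- this is the heart of the MRT/Raksit analysis of the associated graded of $S^1_{\mathrm{fil}}$. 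Applying $(-)^{hS^1}$, $(-)^{tS^1}$, and $(-)_{hS^1}$ to such trivially acted-on shifted complexes and summing over $n$ then recovers the de Rham complex with its Hodge filtration, Hodge completion, periodic variant, and negative variant, producing the four claimed identifications. Multiplicativity is inherited from the filtered $\einfty$-coalgebra structure on $S^1_{\mathrm{fil}}$; completeness of the filtration after $(-)^{hS^1}$ is precisely what forces the Hodge \emph{completion} $\widehat{\dr}$ to appear in place of the uncompleted de Rham complex.

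The principal technical obstacle is the identification of the Connes $B$-operator -- induced on the associated graded $\bigoplus_n \wedge^n \mathbb{L}_A[2n]$ by the $S^1$-action on $\HH(A)$ -- with the de Rham differential on $\bigoplus_n \wedge^n \mathbb{L}_A$. In Raksit's framework this is a computation internal to filtered derived commutative rings; in the MRT picture it reduces to understanding the geometry of $\gr S^1_{\mathrm{fil}}$, which is (co)represented by a derived formal-group-like object encoding the de Rham differential. Once this identification is established, the graded pieces for $\HC^-$, $\HP$, and $\HC$ follow formally from the standard Tate and homotopy fixed point spectral sequences for a trivial $S^1$-action, and the completeness/exhaustiveness properties are inherited from those of the defining Postnikov filtration on polynomial algebras under left Kan extension.
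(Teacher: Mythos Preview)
The paper does not give its own proof of this statement: it is presented as background, attributed to Antieau, Bhatt--Morrow--Scholze, Moulinos--Robalo--To\"en, and Raksit, with no argument supplied. Your proposal is a faithful outline of precisely those cited approaches (Postnikov/HKR on polynomial algebras, left Kan extension, the filtered circle and Raksit's framework for the $S^1$-equivariant lift and the identification of the $B$-operator with the de Rham differential), so there is nothing to compare against and your sketch is appropriate.
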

Using these, in the next section we recall certain fibre sequences involving $\THH$ of perfectoid rings, from \cite{BMS2}.

\newpage

\section{The perfectoid case}
In this section we begin by recalling some basic constructions regarding perfectoid rings, mainly following \cite[Sec. 3]{BMS1}. We apply these in the study of the invariants $\TR^r$ and $(\TR^r)^{hS^1}$, in the case of perfectoid rings, building on ideas of \cite[Sec. 6]{BMS2} and \cite[Sec. 7]{MK1}.

\subsection{Perfectoid Rings} Here we gather some background regarding perfectoid rings. In what follows, $R_0$ always denotes a perfectoid ring.

\begin{con}[Fontaine-style maps $\vartheta_r, \widetilde{\vartheta}_r$ {\cite[Lem. 3.2]{BMS1}}] \label{Fontaine maps}
    Let $R_0$ be a perfectoid ring. Then the following equivalence holds:
    \begin{equation*}
        \ainf (R_0) \simeq \underset{\F}{\lim} \W_r (R_0)
    \end{equation*}
    As a result of this, the Frobenius automorphism $\varphi$ on $\ainf (R_0)$ is identified with the Witt vector Frobenius $\F : \ainf (R_0) \to \ainf (R_0)$, while its inverse $\varphi^{-1}$ is identified with the restriction map $\Res : \ainf (R_0) \to \ainf (R_0)$.

    Under this identification, we can consider the projection maps to the $r$-truncated Witt vectors of $R_0$:
    \begin{equation*}
        \widetilde{\vartheta}_r: \ainf (R_0) \longrightarrow \W_r (R_0)
    \end{equation*}
    as well as their twists by the $r$-th iteration of $\varphi^{-1}$:
    \begin{equation*}
        \vartheta_r := \widetilde{\vartheta}_r \circ \varphi^r : \ainf (R_0) \longrightarrow \W_r (R_0)
    \end{equation*}
    In particular, for $r=1$, we obtain the aforementioned Fontaine map $\vartheta_1 = \vartheta$, whose kernel is generated by the degree $1$ distinguished element $\xi \in \ker \vartheta$.

    From this, one is able to also deduce that the kernel of the map $\vartheta_r$ is generated by the element:
    \begin{equation*}
        \xi_r := \xi \varphi^{-1} (\xi) \dots \varphi^{-(r-1)} (\xi)
    \end{equation*}
    and the kernel of $\widetilde{\vartheta}_r$ is generated by
    \begin{equation*}
        \widetilde{\xi}_r := \varphi^r (\xi_r) = \varphi (\xi) \varphi^2 (\xi) \dots \varphi^r (\xi)
    \end{equation*}

    Finally, taking the derived limit over the Restriction mas, we write $\xi_{\infty}:= \rlimr \xi_r$. It follows that:
    \begin{equation*}
        \ainf (R_0)/ \xi_{\infty} \simeq \rlimr \ainf (R_0) / \xi_r \simeq \W (R_0)
    \end{equation*}
    In particular, if $R_0 = \mathcal{O}_K$, for a spherically complete perfectoid ring $K$, we get that $\xi_{\infty} = \underset{\Res}{\lim} \xi_r = \mu$ and $\ainf (R_0)/\mu \simeq \W (R_0)$.
\end{con}

The interactions between the maps $\vartheta_r$, $\widetilde{\vartheta}_r$ and the Restriction, Frobenius maps on the Witt vectors are documented in the following lemma:
\begin{lem}[The action of $\Res$, $\F$ following {\cite[Lem. 3.4]{BMS1}}] \label{Fontaine diagrams} Consider the Witt vector Restriction and Frobenius maps. Under their action, the following diagrams are commutative for $\vartheta_r$:
\begin{equation*}
    \begin{tikzcd}[column sep=huge, row sep=huge]
        \ainf (R_0) \arrow[d, "\vartheta_{r+1}"'] \arrow[r, "\id"] & \ainf (R_0) \arrow[d, "\vartheta_r"] & \ainf (R_0) \arrow[d, "\vartheta_{r+1}"'] \arrow[r, "\varphi"] & \ainf(R_0) \arrow[d, "\vartheta_r"] \\
        \W_{r+1} (R_0) \arrow[r, "\Res"] & \W_r (R_0) & \W_{r+1} (R_0) \arrow[r, "\F"] & \W_r (R_0)
    \end{tikzcd}
\end{equation*}
Also, the following diagrams are commutative for $\widetilde{\vartheta}_r$:
\begin{equation*}
    \begin{tikzcd}[column sep=huge, row sep=huge]
        \ainf (R_0) \arrow[r, "\varphi^{-1}"] \arrow[d, "\widetilde{\vartheta}_{r+1}"'] & \ainf (R_0) \arrow[d, "\widetilde{\vartheta}_r"] & \ainf (R_0) \arrow[r, "\id"] \arrow[d, "\widetilde{\vartheta}_{r+1}"'] & \ainf (R_0) \arrow[d, "\widetilde{\vartheta}_r"] \\
        \W_{r+1} (R_0) \arrow[r, "\Res"] & \W_r (R_0) & \W_{r+1} (R_0) \arrow[r, "\F"] & W_r (R_0)
    \end{tikzcd}
\end{equation*}
\end{lem}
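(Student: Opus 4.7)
The plan is to unwind the definitions using the identification $\ainf(R_0) \simeq \lim_{\F} \W_r(R_0)$ recalled in Construction \ref{Fontaine maps}, under which $\widetilde{\vartheta}_r$ is the canonical projection from the inverse limit onto its $r$-th component.

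First I would establish the two diagrams involving $\widetilde{\vartheta}_r$. Given an element $a \in \ainf(R_0)$ corresponding to a compatible system $(a_n)_n$ with $\F(a_{n+1}) = a_n$, the identity $\F \circ \widetilde{\vartheta}_{r+1} = \widetilde{\vartheta}_r$ is immediate from the defining property of the inverse limit, settling the rightmost diagram of the second row. For the leftmost diagram of the second row, I would use that the Witt vector Restriction and Frobenius maps commute ($\F \circ \Res = \Res \circ \F$), so the system $\Res: \W_{n+1}(R_0) \to \W_n(R_0)$ descends to an endomorphism of $\ainf(R_0)$ that is compatible on each component with the truncated Restriction; by Construction \ref{Fontaine maps} this endomorphism is precisely $\varphi^{-1}$, yielding the relation $\Res \circ \widetilde{\vartheta}_{r+1} = \widetilde{\vartheta}_r \circ \varphi^{-1}$.

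Next I would deduce the two diagrams involving $\vartheta_r$ from those just proved, together with the definition $\vartheta_r := \widetilde{\vartheta}_r \circ \varphi^r$. The diagram with identity on top follows from the computation
\begin{equation*}
    \Res \circ \vartheta_{r+1} = \Res \circ \widetilde{\vartheta}_{r+1} \circ \varphi^{r+1} = \widetilde{\vartheta}_r \circ \varphi^{-1} \circ \varphi^{r+1} = \widetilde{\vartheta}_r \circ \varphi^r = \vartheta_r,
\end{equation*}
while the diagram with $\varphi$ on top follows from
\begin{equation*}
    \F \circ \vartheta_{r+1} = \F \circ \widetilde{\vartheta}_{r+1} \circ \varphi^{r+1} = \widetilde{\vartheta}_r \circ \varphi^{r+1} = \vartheta_r \circ \varphi.
\end{equation*}

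The main subtlety lies entirely in the first step: verifying that the ``Restriction'' endomorphism of $\ainf(R_0)$ obtained from the compatible system of Restriction maps between truncated Witt vectors really does coincide with $\varphi^{-1}$ under the identification $\ainf(R_0) \simeq \lim_{\F} \W_r(R_0)$. This is not a computation but rather a careful unravelling of the limit description; once it is in hand, every commutativity in the lemma reduces to direct bookkeeping with compatible sequences and with the formula $\vartheta_r = \widetilde{\vartheta}_r \circ \varphi^r$.
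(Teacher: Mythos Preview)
Your proposal is correct and is precisely the standard unwinding of definitions that underlies the cited \cite[Lem.~3.4]{BMS1}; the paper itself does not supply a proof here but simply records the statement as a quotation from that reference. The one point you flag as a subtlety---that the endomorphism of $\lim_{\F}\W_r(R_0)$ induced by the compatible family of truncated Restriction maps agrees with $\varphi^{-1}$---is exactly the content of the sentence ``its inverse $\varphi^{-1}$ is identified with the restriction map'' in Construction~\ref{Fontaine maps}, so once that is granted your computations go through verbatim.
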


\subsection{$\TR$ of perfectoid rings} Now we move on to study the invariants of $\TR^r$ of perfectoid rings. Given a perfectoid ring $R_0$, an important property is that most invariants of $\THH$ are concentrated on even degrees. This is also true for $\TR^r (R_0;\Zp)$, whose properties are documented in the following proposition.

\begin{prop}[The properties of $\TR^r (R_0;\Zp)$, following {\cite[Sec. 6]{BMS2}}, {\cite[Sec. 7]{MK1}}] \label{perfectoid TR}
    Consider the following maps, defined for each $1 \leq r < \infty$:
    \begin{equation*}
        \begin{tikzcd}[column sep=huge]
            \TR^r (R_0;\Zp) \arrow[r] & \THH (R_0;\Zp)^{hC_{p^{r-1}}} \arrow[r, "\varphi^{hC_{p^{r-1}}}"] & \THH (R_0;\Zp)^{tC_{p^r}}
        \end{tikzcd}
    \end{equation*}
    Then, these are equivalences on connective covers. In particular, the following hold for $\TR^r (R_0;\Zp)$:\\
    1) The spectrum $\TR^r (R_0;\Zp)$ is concentrated in even degrees, for $1 \leq r < \infty$.\\
    2) The object $\pi_2 \TR^r (R_0;\Zp)$ is an invertible module over $\pi_0 \TR^r (R_0;\Zp) \simeq \W_r (R_0)$, as a result of which, the following multiplication map is an isomorphism for $i \geq 0$:
    \begin{equation*}
        \mathrm{Sym}^i \pi_2 \TR^r (R_0;\Zp) \longrightarrow \pi_{2i} \TR^r (R_0;\Zp)
    \end{equation*}
\end{prop}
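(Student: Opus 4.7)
The plan is to proceed by induction on $r$, exploiting the inductive pullback description
\[
\TR^{r+1}(R_0;\Zp) \simeq \THH(R_0;\Zp)^{hC_{p^r}} \times_{\THH(R_0;\Zp)^{tC_{p^r}}} \TR^r(R_0;\Zp)
\]
recalled earlier in the paper, together with the foundational input from \cite{BMS2} that for $R_0$ perfectoid the higher Frobenius $\varphi\colon \THH(R_0;\Zp) \to \THH(R_0;\Zp)^{tC_p}$ is an equivalence on connective covers. The base case $r=1$ is exactly this input, since $\TR^1=\THH$; the evenness and polynomial structure $\pi_*\THH(R_0;\Zp)\simeq R_0[u]$ with $|u|=2$ come along for free.

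For the inductive step I first want to upgrade the connective-cover statement from $\varphi$ to $\varphi^{hC_{p^{r-1}}}$. Applying $(-)^{hC_{p^{r-1}}}$ to $\varphi$ produces a map with target $(\THH(R_0;\Zp)^{tC_p})^{hC_{p^{r-1}}}$, which under the Tate orbit lemma for bounded-below $\Zp$-linear objects is identified with $\THH(R_0;\Zp)^{tC_{p^r}}$; since the fiber of $\varphi$ is coconnective, applying $(-)^{hC_{p^{r-1}}}$ keeps it coconnective, and so $\varphi^{hC_{p^{r-1}}}$ remains a connective-cover equivalence. Assuming inductively that $\TR^r(R_0;\Zp)\to \THH(R_0;\Zp)^{hC_{p^{r-1}}}$ is a connective-cover equivalence, the composite $\TR^r(R_0;\Zp)\to \THH(R_0;\Zp)^{hC_{p^{r-1}}}\to \THH(R_0;\Zp)^{tC_{p^r}}$ inherits this property. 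This composite is precisely the right-hand vertical of the pullback square above, and since in a pullback of spectra parallel arrows have the same fiber, the left vertical $\TR^{r+1}(R_0;\Zp)\to \THH(R_0;\Zp)^{hC_{p^r}}$ is also a connective-cover equivalence. Composing with $\varphi^{hC_{p^r}}$ closes the induction.

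The properties (1) and (2) follow formally. Evenness of $\TR^r(R_0;\Zp)$ reduces via the inductive equivalence to evenness of $\THH(R_0;\Zp)^{hC_{p^{r-1}}}$ on connective covers; the latter is read off from the $C_{p^{r-1}}$-homotopy fixed-point spectral sequence applied to $\THH(R_0;\Zp)\simeq R_0[u]$, whose $E_2$-page is concentrated in even total degree and collapses for degree reasons. The identification $\pi_0\TR^r(R_0;\Zp)\simeq \W_r(R_0)$ is the Hesselholt--Madsen theorem recalled earlier. For (2), the Bott-type class $u\in\pi_2\THH(R_0;\Zp)$ promotes to a generator of $\pi_2\TR^r(R_0;\Zp)$ as a $\W_r(R_0)$-module, and the isomorphism $\Sym^i\pi_2\TR^r\to \pi_{2i}\TR^r$ follows from the polynomial structure of $\pi_*\THH(R_0;\Zp)$ together with the multiplicativity of the inductive equivalence.

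The main obstacle is the careful handling of the Tate-orbit-lemma identification $(\THH(R_0;\Zp)^{tC_p})^{hC_{p^{r-1}}}\simeq \THH(R_0;\Zp)^{tC_{p^r}}$ and the precise bound on how taking $C_{p^{r-1}}$-homotopy fixed points affects the connectivity of the fiber of the higher Frobenius; one must verify that the degrees are preserved uniformly in $r$ so that the connective-cover statement survives each induction step. The perfectoidness of $R_0$ enters crucially here through the BMS2 input controlling the fiber of $\varphi$.
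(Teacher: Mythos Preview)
The paper does not give its own proof here; the proposition is stated as ``following'' \cite[Sec.~6]{BMS2} and \cite[Sec.~7]{MK1} and then used as input. Your inductive scheme for the connective-cover equivalences is correct and is essentially how those references proceed: the fiber of $\varphi$ lives in negative degrees, $(-)^{hC_{p^{r-1}}}$ preserves this, and the pullback square transports the connective-cover statement across.

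The genuine gap is in your argument for evenness. You claim that the $C_{p^{r-1}}$-homotopy fixed point spectral sequence has $E_2$-page concentrated in even total degree and collapses for degree reasons, but this is false in general: $E_2^{s,t}=H^s(C_{p^{r-1}};\pi_t\THH(R_0;\Zp))$, and for odd $s$ this is the $p^{r-1}$-torsion subgroup of $R_0$, which is nonzero whenever $R_0$ has $p$-torsion (e.g.\ any perfect $\Fp$-algebra). So there are odd-degree classes on $E_2$ and one cannot conclude evenness by a parity argument. The route in \cite{BMS2} avoids the finite-group spectral sequence entirely: one first shows $\TP(R_0;\Zp)$ is even and $2$-periodic via the $S^1$-Tate spectral sequence (whose $E_2$-page \emph{is} in even total degree, since $H^*(BS^1)$ is polynomial on a degree-$2$ class), and then uses that the ring map $\TP(R_0;\Zp)\to\THH(R_0;\Zp)^{tC_{p^r}}$ makes the target $2$-periodic as well. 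Combined with the connective-cover identification $\tau_{\geq 0}\THH(R_0;\Zp)^{tC_{p^r}}\simeq\TR^r(R_0;\Zp)$ and Hesselholt--Madsen on $\pi_0$, this forces evenness and the polynomial structure in (2). Your closing paragraph flags the Tate-orbit bookkeeping as the main obstacle, but the piece actually missing from your sketch is this $\TP$-module input (or, equivalently, the explicit computation $\pi_*\THH(R_0;\Zp)^{tC_{p^r}}\simeq \W_r(R_0)[\sigma^{\pm 1}]$).
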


As we noted in the introduction, our investigations follow a somewhat indirect course. In particular, the plan is to first look at the $S^1$-homotopy fixed points $\TR^r (R_0;\Zp)^{hS^1}$ and then pass to $\TR^r (R_0;\Zp)$ itself, for $1 \leq r < \infty$. The following two propositions are devoted to these. In particular, we first identify the structure of the homotopy groups of $\TR^r (R_0;\Zp)^{hS^1}$ and then explain how to pass from the $S^1$-homotopy fixed points to the original spectrum $\TR^r (R_0;\Zp)$.

We are now able to identify the structure of the spectra $\TR^r (R_0;\Zp)^{hS^1} \to \TR^r (R_0;\Zp)$, the bulk of which is concentrated in the following theorem:

\begin{prop}[Understanding $\pi_* \TR^r (R_0;\Zp)^{hS^1}$] \label{graded ring}
    The spectrum $\TR^r (R_0;\Zp)^{hS^1}$, $1 \leq r < \infty$ is concentrated in even degrees. In particular, the $S^1$-homotopy fixed points spectral sequence degenerates, yielding the following identification for the even homotopy groups:
    \begin{equation*}
        \pi_{2i} \TR^r (R_0;\Zp)^{hS^1} \simeq
        \begin{cases}
            \xi_r^i \ainf (R_0) \{ i\}, & i \geq 0 \\[5pt]
            \ainf (R_0) \{ i\}, & i < 0
        \end{cases}
    \end{equation*}
\end{prop}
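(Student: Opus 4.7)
The proof plan consists of two parts: first, establishing that $\TR^r(R_0;\Zp)^{hS^1}$ is concentrated in even degrees; second, computing the even homotopy groups explicitly.

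For even concentration, the cleanest route is to invoke Proposition~\ref{perfectoid TR}, which says that $\TR^r(R_0;\Zp)$ itself is concentrated in even degrees for perfectoid $R_0$ and $1 \leq r < \infty$. The $S^1$-homotopy fixed points spectral sequence
\begin{equation*}
    E_2^{s,t} = H^s(BS^1;\pi_t\TR^r(R_0;\Zp)) \Longrightarrow \pi_{t-s}\TR^r(R_0;\Zp)^{hS^1}
\end{equation*}
then has its $E_2$-page supported on even $t$, while $H^*(BS^1) = \Z[\beta]$ with $|\beta|=2$ forces the $s$-coordinate to be even as well. Since each differential $d_k$ shifts the $t$-grading by $k-1$, it must change the parity of $t$ for all $k \geq 2$, and hence vanishes identically. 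The spectral sequence therefore degenerates at $E_2$, giving the required even concentration.

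For the identification of the even homotopy groups, I would use the iterated pullback description
\begin{equation*}
    \TR^r(R_0;\Zp)^{hS^1} \simeq \TC^-(R_0;\Zp) \times_{\TP(R_0;\Zp)} \cdots \times_{\TP(R_0;\Zp)} \TC^-(R_0;\Zp)
\end{equation*}
combined with the computations $\pi_*\TC^-(R_0;\Zp) \simeq \ainf(R_0)[u,v]/(uv-\xi)$ and $\pi_*\TP(R_0;\Zp) \simeq \ainf(R_0)[\sigma^{\pm}]$ recalled from the end of Section 2.5. Even concentration of the factors collapses the induced Mayer--Vietoris long exact sequence into a short exact sequence
\begin{equation*}
    0 \to \pi_{2i}\TR^r(R_0;\Zp)^{hS^1} \to \prod_{k=1}^r \pi_{2i}\TC^-(R_0;\Zp) \xrightarrow{\can - \varphi^{hS^1}} \prod_{k=1}^{r-1}\pi_{2i}\TP(R_0;\Zp) \to 0,
\end{equation*}
and one proceeds by computing the kernel as an $\ainf(R_0)$-module. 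For $i \geq 0$, writing a tuple as $(a_1 u^i, \ldots, a_r u^i)$, the $\ainf$-linearity of $\can$ combined with the $\varphi$-semilinearity of $\varphi^{hS^1}$ produces a recursion in which each consecutive pair $(a_k, a_{k+1})$ is related by an application of $\varphi^{-1}$ together with a factor of $\xi$ (or $\widetilde{\xi}$, depending on convention). Iterating this recursion $r-1$ times produces a constraint governed precisely by the defining formula $\xi_r = \prod_{j=0}^{r-1}\varphi^{-j}(\xi)$ from Construction~\ref{Fontaine maps}, up to a Breuil--Kisin twist shift absorbed by $\{i\}$, yielding $\xi_r^i \ainf(R_0)\{i\}$ via the natural map $\TR^r(R_0;\Zp)^{hS^1} \to \TP(R_0;\Zp)$. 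For $i < 0$, the analogous argument using the $v^{-i}$-generators produces no positivity constraint, so the kernel identifies with the full $\ainf(R_0)\{i\}$.

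The principal obstacle I expect is the careful bookkeeping of the Frobenius twists in the iterated kernel computation, and in particular reconciling the explicit formulas for $\can$ and $\varphi^{hS^1}$ on $u, v$ with the recursive expression of $\xi_r$ under inverse Frobenius, while keeping track of how the Breuil--Kisin twist $\{i\}$ absorbs the residual powers of $\varphi^{-\bullet}$. Once the recursion on the coordinates $(a_k)$ is solved cleanly, the identification of the kernel with $\xi_r^i \ainf(R_0)\{i\}$ for $i \geq 0$ and $\ainf(R_0)\{i\}$ for $i < 0$ follows formally, and the polynomial presentation $\ainf(R_0)[u_r, v_r]/(u_r v_r - \xi_r)$ mentioned in the proof outline emerges by assembling these pieces multiplicatively across all degrees.
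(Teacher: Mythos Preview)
Your proposal is correct. The even-concentration argument via degeneration of the $S^1$-homotopy fixed points spectral sequence is exactly how the paper begins (the paper cites a general even-input lemma, while you spell out the parity obstruction for the differentials, but this is the same argument).

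For the identification of the homotopy groups, however, your route differs from the paper's primary one. You compute the kernel in the short exact sequence coming from the iterated pullback directly, running a recursion on the coordinates $(b_k)$ governed by the relations $\can(b_k u^i)=\varphi^{hS^1}(b_{k+1}u^i)$ and reading off the factor $\xi_r=\prod_{j=0}^{r-1}\varphi^{-j}(\xi)$. The paper instead argues more structurally: it first observes that $\TR^r(R_0;\Zp)^{tS^1}\simeq\TP(R_0;\Zp)$ by the Tate orbit lemma, so each $\pi_{2i}\TR^r(R_0;\Zp)^{hS^1}$ is \emph{non-canonically} $\ainf(R_0)\{i\}$; it then identifies $\pi_0$ of the map $\TR^r(R_0;\Zp)^{hS^1}\to\TR^r(R_0;\Zp)$ with $\vartheta_r:\ainf(R_0)\to\W_r(R_0)$ (by induction on $r$, using the commutative square involving $\varphi^{hC_{p^{r-1}}}$), so that the filtration from the degenerate spectral sequence on $\pi_0$ is the $\xi_r$-adic one, and then propagates this to all degrees by multiplicativity. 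The paper does mention your iterated-pullback computation at the very end as an alternative derivation. Your approach is more explicit and self-contained; the paper's primary argument is more conceptual and has the advantage of simultaneously pinning down the map to $\W_r(R_0)$ as $\vartheta_r$, which is needed later anyway.
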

\begin{proof}
    Since the spectrum $\TR^r (R_0;\Zp)$ is concentrated in even degrees, the same happens for the spectra $\TR^r (R_0;\Zp)^{hS^1} \to \TR^r (R_0;\Zp)^{tS^1}$, from say \cite[Rem. 6.1.7]{APC}. However, from the Tate orbit lemma of Nikolaus--Scholze \cite[Lem. I.2.1]{AN} we know that $\TR^r (-;\Zp)^{tC_p} \simeq \THH (-;\Zp)^{tC_p}$, and therefore, $\TR^r (-;\Zp)^{tS^1} \simeq \TP (-;\Zp)$. From \cite[Sec. 6]{BMS2}, it follows that $\TP (R_0;\Zp)$ has the following identification for its even homotopy groups:
    \begin{equation*}
        \pi_{2i} \TP (R_0;\Zp) \simeq \ainf (R_0) \{ i\}
    \end{equation*}
    
    Hence, forgetting the multiplicative structure of $\TR^r (R_0;\Zp)^{hS^1}$, the same is \emph{non-canonically} true for its even homotopy groups, as well:
    \begin{equation*}
        \pi_{2i} \TR^r (R_0;\Zp)^{hS^1} \simeq \ainf (R_0) \{ i\}
    \end{equation*}
    
    Since the spectrum $\TR^r (R_0;\Zp)$ is concentrated even degrees, the associated $S^1$-homotopy fixed points spectral sequence degenerates, thus endowing $\pi_{2i} \TR^r (R_0;\Zp)^{hS^1}$ with a filtration. Applying $\pi_0$ to the map $\TR^r (R_0;\Zp)^{hS^1} \to \TR^r (R_0;\Zp)$, this is identified with the map:
    \begin{equation*}
        \vartheta_r : \ainf (R_0) \longrightarrow \W_r (R_0)
    \end{equation*}
    whose kernel is generated by the element $\xi_r$. This follows by the commutative square:
    \begin{equation*}
        \begin{tikzcd}[column sep=huge, row sep=huge]
            \ainf (R_0) \arrow[r, "\varphi^r"] \arrow[d, "\vartheta_r"'] & \ainf (R_0) \arrow[d, "\widetilde{\vartheta}_r"] \\
            \W_r (R_0) \arrow[r] & \W_r (R_0)
        \end{tikzcd}
    \end{equation*}
    which is obtained by applying $\pi_0$ on the commutative square:
    \begin{equation*}
        \begin{tikzcd}[column sep=huge]
            \TR^r (R_0;\Zp)^{hS^1} \arrow[r] \arrow[d] & \TC^- (R_0;\Zp) \arrow[r, "\varphi^{hS^1}"] \arrow[d] & \TP (R_0;\Zp) \arrow[d] \\
            \TR^r (R_0;\Zp) \arrow[r] & \THH (R_0;\Zp)^{hC_{p^{r-1}}} \arrow[r, "\varphi^{hC_{p^{r-1}}}"] & \THH (R_0;\Zp)^{tC_{p^r}}
        \end{tikzcd}
    \end{equation*}
    This is true for $r=1$, by \cite{BMS2}. For general $r \geq 1$, the claim follows by induction. One needs to use the fact that the rightmost map $\TP (R_0;\Zp) \to \THH (R_0;\Zp)^{tC_{p^r}}$ always gives rise to $\widetilde{\vartheta}_r : \ainf (R_0) \to \W_r (R_0)$, upon applying $\pi_0$ \cite[Sec. 6]{BMS2}.
    
    It follows that the filtration on $\pi_0 \TR^r (R_0;\Zp)^{hS^1}$ coming from the $S^1$-homotopy fixed points spectral sequence is the $\xi_r$-adic filtration on $\ainf (R_0)$. It propagates to all even homotopy groups of $\TR^r (R_0;\Zp)^{hS^1}$, via its multiplicative structure. The canonical identification follows:
    \begin{equation*}
        \pi_* \TR^r (R_0;\Zp)^{hS^1} \simeq
        \begin{cases}
            \xi_r^i \ainf (R_0) \{ i\}, & *=2i \geq 0 \\[5pt]
            \ainf (R_0) \{ i\}, & *=2i < 0 \\[5pt]
            0, & \text{otherwise}
        \end{cases}
    \end{equation*}

    This identification can also be shown via the iterated pullback description:
    \begin{equation*}
        \TR^r (R_0;\Zp)^{hS^1} \simeq \TC^- (R_0;\Zp) \times_{\TP (R_0;\Zp)} \dots \times_{\TP (R_0;\Zp)} \TC^- (R_0;\Zp)
    \end{equation*}
    which gives rise to the short exact sequences:
    \begin{equation*}
        \begin{tikzcd}
            0 \arrow[r] & \pi_{2i} \TR^r (R_0;\Zp)^{hS^1} \arrow[r] & \displaystyle{\prod_{1 \leq k \leq r} \pi_{2i} \TC^- (R_0;\Zp)} \arrow[r] & \displaystyle{\prod_{1 \leq k \leq r-1} \pi_{2i} \TP (R_0;\Zp)} \arrow[r] & 0
        \end{tikzcd}
    \end{equation*}
\end{proof}

A direct reformulation of this proposition, using the graded ring descriptions from \cite[Sec. 6]{BMS2}:
\begin{gather*}
    \begin{cases}
        \pi_* \TC^- (R_0;\Zp) \simeq \ainf (R_0) [u, v]/(uv-\xi), & \mathrm{deg}(u) = 2,\; \mathrm{deg}(v) =-2,\; \mathrm{deg}(\xi)=0 \\[5pt]
        \pi_* \TP (R_0;\Zp) \simeq \ainf (R_0) [\sigma, \sigma^{-1}], & \mathrm{deg}(\sigma) = 2
    \end{cases}
\end{gather*}
is the following corollary, regarding the multiplicative structure of $\TR^r (R_0;\Zp)^{hS^1}$:
\begin{cor}
    The graded ring associated with $\TR^r (R_0;\Zp)^{hS^1}$ has the following description, for generating elements with $\mathrm{deg}(u_r)=2$, $\mathrm{deg}(v_r)=-2$, and $\mathrm{deg}(\xi_r)=0$:
    \begin{equation*}
        \pi_* \TR^r (R_0;\Zp)^{hS^1} \simeq \ainf (R_0) [u_r, v_r]/ (u_r v_r - \xi_r)
    \end{equation*}
\end{cor}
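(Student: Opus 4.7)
The proof is a multiplicative refinement of the previous proposition, packaging its additive computation together with the BMS2 descriptions of $\TC^-$ and $\TP$ via the iterated pullback presentation
$$\TR^r(R_0;\Zp)^{hS^1} \simeq \TC^-(R_0;\Zp) \times_{\TP(R_0;\Zp)} \cdots \times_{\TP(R_0;\Zp)} \TC^-(R_0;\Zp)$$
as a pullback of $\einfty$-rings via the ring maps $\can$ and $\varphi^{hS^1}$. Since every term has homotopy concentrated in even degrees, there is no $\mathrm{Tor}$ or $\lim^1$ obstruction and applying $\pi_*$ produces the corresponding pullback of graded $\ainf(R_0)$-algebras.

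Substituting in $\pi_*\TC^-(R_0;\Zp) \simeq \ainf(R_0)[u,v]/(uv-\xi)$ and $\pi_*\TP(R_0;\Zp) \simeq \ainf(R_0)[\sigma, \sigma^{-1}]$, together with the explicit formulas for $\can$ and for the $\varphi$-semilinear map $\varphi^{hS^1}$ recalled in the preliminaries, I would identify a generator $v_r \in \pi_{-2}$ arising from the compatible tuple $(v,\dots,v)$ across the $r$ copies of $\pi_{-2}\TC^-$ (whose compatibility works out precisely because $\widetilde{\xi} = \varphi(\xi)$), and a generator $u_r \in \pi_2$ whose projection to each factor is an $\ainf(R_0)$-multiple of $u$. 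The crucial step is to identify the product $u_r v_r$ in $\pi_0 \TR^r(R_0;\Zp)^{hS^1} \simeq \ainf(R_0)$ with the element $\xi_r = \xi \varphi^{-1}(\xi) \cdots \varphi^{-(r-1)}(\xi)$ from Construction \ref{Fontaine maps}. This is carried out by inductively tracking the equalizer conditions through $\varphi^{hS^1}$: in any single $\TC^-$-factor the relation $uv = \xi$ holds, and propagating through the successive pullbacks accumulates factors $\varphi^{-k}(\xi)$ whose product is precisely $\xi_r$.

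Once the relation $u_r v_r = \xi_r$ is established, one verifies that the induced graded ring homomorphism
$$\ainf(R_0)[u_r, v_r]/(u_r v_r - \xi_r) \longrightarrow \pi_* \TR^r(R_0;\Zp)^{hS^1}$$
is an isomorphism degree-by-degree: the even degrees match by the previous proposition (using that $\xi_r$ is a non-zero-divisor in $\ainf(R_0)$, so the source is free of rank one in each even degree and maps isomorphically onto $\xi_r^i \ainf(R_0)$ for $i \geq 0$ and onto $\ainf(R_0)$ for $i < 0$), while the odd degrees vanish on both sides.

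The main obstacle is the bookkeeping around the $\varphi$-semilinearity of $\varphi^{hS^1}$ and the Breuil--Kisin twists, in order to pin down the element $\xi_r$ rather than $\widetilde{\xi}_r$ or some further twisted variant. Once conventions are fixed consistently with the Fontaine-style projections $\vartheta_r$ and $\widetilde{\vartheta}_r$ (cf. Lemma \ref{Fontaine diagrams}), the remaining work is essentially mechanical.
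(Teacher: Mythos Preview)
Your overall strategy---iterated pullback of $\einfty$-rings, the BMS2 graded-ring descriptions, then a degree-by-degree comparison against Proposition~\ref{graded ring}---is exactly how the paper frames the corollary; it is stated there as ``a direct reformulation'' with no separate proof given.

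One concrete slip: the tuple $(v,\dots,v)$ is \emph{not} compatible. With the paper's conventions ($\can$ to the left, $\varphi^{hS^1}$ to the right, at each $\TP$), compatibility for $(a_1 v,\dots,a_r v)$ in degree $-2$ reads $a_k \sigma^{-1} = \varphi(a_{k+1})\widetilde{\xi}\,\sigma^{-1}$, i.e.\ $a_k = \varphi(a_{k+1})\widetilde{\xi}$. Normalizing $a_r = 1$ forces $a_{r-j} = \widetilde{\xi}_j$, so $v_r$ corresponds to $(\widetilde{\xi}_{r-1} v,\dots,\widetilde{\xi}\,v,\, v)$ rather than the constant tuple. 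Your parenthetical ``$\widetilde{\xi} = \varphi(\xi)$'' is precisely the identity that makes the $\widetilde{\xi}_j$ accumulate correctly under this recursion, but it does not place $(v,\dots,v)$ in the equalizer. A similar recursion in degree $2$ gives $u_r = \big(u,\ \varphi^{-1}(\xi)\,u,\ \dots,\ \varphi^{-1}(\xi_{r-1})\,u\big)$. With these corrected tuples the product $u_r v_r$ lands on $\xi_r$ in the rightmost coordinate (and on $\varphi^{r-k}(\xi_r)$ in the $k$-th, as required by the $\pi_0$-compatibility), and the remainder of your argument goes through unchanged. You correctly anticipated that this $\varphi$-semilinear bookkeeping is the only substantive content; it just needs to be executed as above rather than as sketched.
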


The following proposition is the main trick that allows us to go from the $S^1$-homotopy fixed points $\TR^r (-;\Zp)^{hS^1}$, back to the original spectrum $\TR^r (-;\Zp)$. It will be used repeatedly throughout the paper, as this is paves a way to go from fairly more accessible calculations via $\TR^r (-;\Zp)^{hS^1}$, back to identifying the structure of $\TR^r (-;\Zp)$.

\begin{prop}[Back to $\TR^r (-;\Zp)$] \label{quotient} Let $A$ be a connective $\einfty$-$R_0$-algebra. Then the following natural map is an equivalence of $\einfty$-ring spectra, for any $1 \leq r < \infty$:
\begin{equation*}
    \begin{tikzcd}[column sep=large]
        \TR^r (A;\Zp)^{hS^1}/v_r \simeq \TR^r (A;\Zp)^{hS^1} \otimes_{\TR^r (R_0;\Zp)^{hS^1}} \TR^r (R_0;\Zp) \arrow[r, "\simeq"] & \TR^r (A;\Zp)
    \end{tikzcd}
\end{equation*}
\end{prop}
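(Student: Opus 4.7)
The plan is to first establish the equivalence in the base case $A = R_0$ by direct homotopy computation, and then extend to arbitrary connective $\einfty$-$R_0$-algebras using the module structure over $\TR^r (R_0;\Zp)^{hS^1}$.

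For the base case $A = R_0$, I would start from the cofiber sequence
\[
\TR^r (R_0;\Zp)^{hS^1}[-2] \xrightarrow{v_r} \TR^r (R_0;\Zp)^{hS^1} \longrightarrow \TR^r (R_0;\Zp)^{hS^1}/v_r,
\]
noting that $v_r \in \pi_{-2} \TR^r (R_0;\Zp)^{hS^1}$ maps to zero in $\TR^r (R_0;\Zp)$ since $\pi_{-2} \TR^r (R_0;\Zp) = 0$ by Proposition \ref{perfectoid TR}. This produces the natural map to $\TR^r (R_0;\Zp)$. Using the graded ring presentation $\pi_* \TR^r (R_0;\Zp)^{hS^1} \simeq \ainf (R_0)[u_r, v_r]/(u_r v_r - \xi_r)$ and the identification $\ainf (R_0)/\xi_r \simeq \W_r (R_0)$ via $\vartheta_r$ from Construction \ref{Fontaine maps}, the long exact sequence for multiplication by $v_r$ yields $\pi_{2i}(\TR^r (R_0;\Zp)^{hS^1}/v_r) \simeq \W_r (R_0)$ for $i \geq 0$ and zero otherwise. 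This matches $\pi_* \TR^r (R_0;\Zp) \simeq \W_r (R_0)[u_r]$ term-by-term, and the induced map in each even degree is the quotient $\ainf (R_0) \twoheadrightarrow \W_r (R_0)$, hence an isomorphism.

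For general $A$, the functor $\THH (-;\Zp)$ is $R_0$-linear, so $\TR^r (A;\Zp)^{hS^1}$ is naturally an $\einfty$-$\TR^r (R_0;\Zp)^{hS^1}$-algebra, and the canonical map $\TR^r (A;\Zp)^{hS^1} \to \TR^r (A;\Zp)$ is linear over this action. Since $v_r$ already vanishes in $\TR^r (R_0;\Zp)$, it vanishes in $\TR^r (A;\Zp)$, producing the natural map $\TR^r (A;\Zp)^{hS^1}/v_r \to \TR^r (A;\Zp)$. The first equivalence in the statement is then formal: base-changing the cofiber sequence along $\TR^r (R_0;\Zp)^{hS^1} \to \TR^r (R_0;\Zp)$ and invoking the base case $\TR^r (R_0;\Zp)^{hS^1}/v_r \simeq \TR^r (R_0;\Zp)$ yields
\[
\TR^r (A;\Zp)^{hS^1}/v_r \simeq \TR^r (A;\Zp)^{hS^1} \otimes_{\TR^r (R_0;\Zp)^{hS^1}} \TR^r (R_0;\Zp).
\]

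To conclude that this agrees with $\TR^r (A;\Zp)$, I would compare the iterated pullback presentations: $\TR^r (A;\Zp)^{hS^1}$ as a limit of copies of $\TC^- (A;\Zp)$ glued along $\TP (A;\Zp)$, and $\TR^r (A;\Zp)$ as a limit of $\THH (A;\Zp)^{hC_{p^k}}$ glued along $\THH (A;\Zp)^{tC_{p^k}}$. Factor by factor, the cofiber of $v_r$ identifies $\TC^- (A;\Zp)/v$ with $\THH (A;\Zp)^{hC_{p^k}}$ and $\TP (A;\Zp)/v$ with $\THH (A;\Zp)^{tC_{p^k}}$; these identifications are established in \cite[Sec.~6]{BMS2} for the perfectoid base and propagate $R_0$-linearly to $A$. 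The main obstacle is verifying that the operation $(-)/v_r$ commutes with the finite iterated pullback and that it matches up the higher Frobenii $\varphi^{hS^1}$ and $\varphi^{hC_{p^k}}$ appropriately across the two diagrams. This compatibility is essentially forced in the perfectoid case by the explicit homotopy computations, and for general $A$ it follows from the $R_0$-linearity of the structure maps of both pullback presentations.
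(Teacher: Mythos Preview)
Your base case $A=R_0$ is fine, and the identification $\TR^r(A;\Zp)^{hS^1}/v_r \simeq \TR^r(A;\Zp)^{hS^1}\otimes_{\TR^r(R_0;\Zp)^{hS^1}}\TR^r(R_0;\Zp)$ is indeed formal once the base case is known. The problem is the final step for general $A$.

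The factor-by-factor comparison you propose does not hold. In the iterated pullback for $\TR^r(A;\Zp)^{hS^1}$ every vertex is the \emph{same} object $\TC^-(A;\Zp)$ and every gluing object is $\TP(A;\Zp)$; in the iterated pullback for $\TR^r(A;\Zp)$ the vertices are $\THH(A;\Zp)^{hC_{p^k}}$ and the gluing objects are $\THH(A;\Zp)^{tC_{p^k}}$ for \emph{varying} $k$. Killing $v$ in $\TC^-(A;\Zp)$ yields $\THH(A;\Zp)$, not $\THH(A;\Zp)^{hC_{p^k}}$ for $k\geq 1$; likewise $\TP(A;\Zp)/v \simeq \THH(A;\Zp)^{tC_p}$, not $\THH(A;\Zp)^{tC_{p^k}}$ for $k\geq 2$. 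There is no single element whose cofiber carries one diagram to the other term by term, so the ``$R_0$-linearity'' you invoke at the end cannot supply the missing identifications.

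The paper bypasses the iterated pullbacks entirely. It cites \cite[Prop.~6.4]{BMS2}, which is an application of \cite[Lem.~IV.4.12]{NS}: if $R$ is a bounded-below ring spectrum with $S^1$-action whose homotopy groups are concentrated in even degrees, then for every bounded-below $R$-module $M$ in $S^1$-spectra the canonical map $M^{hS^1}\otimes_{R^{hS^1}} R \to M$ is an equivalence. One takes $R=\TR^r(R_0;\Zp)$, which is even and connective by Proposition~\ref{perfectoid TR}, and $M=\TR^r(A;\Zp)$, which is an $R$-module in $S^1$-spectra because $A$ is an $R_0$-algebra. The only inputs are the evenness of the base and the $S^1$-equivariant module structure; your base-case computation is exactly what verifies the hypothesis, but the passage to general $A$ needs this abstract lemma rather than a diagram comparison.
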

\begin{proof}
    The proof is a direct analogue of the one provided in \cite[Prop 6.4]{BMS2}, which is an application of \cite[Lem IV.4.12]{AN}.
\end{proof}

Now, we are able to state our main theorem in the case of perfectoid rings.

\begin{thm}[Main result in the perfectoid case] \label{main perfectoid} Given a fixed perfectoid ring $R_0$, the following are true for $\TR^r (R_0;\Zp)$ and $\TR^r (R_0;\Zp)^{hS^1}$:
\begin{enumerate}[1)]
\item Consider the following commutative diagrams for the Restriction and Frobenius maps of $\TR^r (R_0;\Zp)$ and its $S^1$-homotopy fixed points $\TR^r (R_0;\Zp)^{hS^1}$, for $1 \leq r < \infty$.
\begin{equation*}
    \begin{tikzcd}[column sep=large]
        \TR^{r+1} (R_0;\Zp)^{hS^1} \arrow[r, "\Res^{hS^1}"] \arrow[d] & \TR^r (R_0;\Zp)^{hS^1} \arrow[d] & \TR^{r+1} (R_0;\Zp)^{hS^1} \arrow[r, dashed, "\F^{hS^1}"] \arrow[d] & \TR^r (R_0;\Zp)^{hS^1} \arrow[d] \\
        \TR^{r+1} (R_0;\Zp) \arrow[r, "\Res"] & \TR^r (R_0;\Zp) & \TR^{r+1} (R_0;\Zp) \arrow[r, dashed, "\F"] & \TR^r (R_0;\Zp)
    \end{tikzcd}
\end{equation*}
Passing to homotopy groups, they give rise to the following:
\begin{equation*}
    \begin{tikzcd}[column sep=huge, row sep=huge]
        \displaystyle{\frac{\ainf (R_0) [u_{r+1}, v_{r+1}]}{(u_{r+1} v_{r+1} - \xi_{r+1})}} \arrow[r, "u_{r+1} \mapsto \varphi^{-r}(\xi) u_r \text{, } v_{r+1} \mapsto v_r", "\ainf (R_0) \text{-linear}"'] \arrow[d, "u_{r+1} \mapsto u_{r+1} \text{, } v_{r+1} \mapsto 0", "\vartheta_{r+1} \text{-linear}"'] &[1.7in] \displaystyle{\frac{\ainf (R_0) [u_r, v_r]}{(u_r v_r - \xi_r)}} \arrow[d, "u_r \mapsto u_r \text{, } v_r \mapsto 0", "\vartheta_r - \text{linear}"'] \\
        \W_{r+1} (R_0) [u_{r+1}] \arrow[r, "u_{r+1} \mapsto \varphi^{-r} (\xi) u_r"] & \W_r (R_0) [u_r] \\[-10pt]
        \displaystyle{\frac{\ainf (R_0) [u_{r+1}, v_{r+1}]}{(u_{r+1} v_{r+1} - \xi_{r+1})}} \arrow[r, dashed, "u_{r+1} \mapsto u_r \text{, } v_{r+1} \mapsto \varphi (\xi) v_r", "\varphi \text{-linear}"'] \arrow[d, "u_{r+1} \mapsto u_{r+1} \text{, } v_{r+1} \mapsto 0", "\vartheta_{r+1} \text{-linear}"'] &[1.5in] \displaystyle{\frac{\ainf (R_0) [u_r, v_r]}{(u_r v_r - \xi_r)}} \arrow[d, "u_r \mapsto u_r \text{, } v_r \mapsto 0", "\vartheta_r \text{-linear}"'] \\
        \W_{r+1} (R_0) [u_{r+1}] \arrow[r, dashed, "u_{r+1} \mapsto u_r"] & \W_r (R_0) [u_r]        
    \end{tikzcd}
\end{equation*}
\vspace{0.05in}

\item Mapping further to the Tate fixed points, we obtain the following commutative diagrams:
\begin{equation*}
    \begin{tikzcd}[column sep=large]
        \TR^{r+1} (R_0;\Zp)^{hS^1} \arrow[r] \arrow[d] &[10pt] \TC^- (R_0;\Zp) \arrow[r, "\can"] \arrow[d] &[10pt] \TP (R_0;\Zp) \arrow[d] \\
        \TR^{r+1} (R_0;\Zp) \arrow[r] & \THH (R_0;\Zp)^{hC_{p^r}} \arrow[r, "\can"] & \THH (R_0;\Zp)^{tC_{p^r}} \\
        \TR^{r+1} (R_0;\Zp)^{hS^1} \arrow[r] \arrow[d] &[10pt] \TC^- (R_0;\Zp) \arrow[r, dashed, "\varphi^{hS^1}"] \arrow[d] &[10pt] \TP (R_0;\Zp) \arrow[d] \\
        \TR^{r+1} (R_0;\Zp) \arrow[r] & \THH (R_0;\Zp)^{hC_{p^r}} \arrow[r, dashed, "\varphi^{hC_{p^r}}"] & \THH (R_0;\Zp)^{tC_{p^{r+1}}}
    \end{tikzcd}
\end{equation*}
Passing to homotopy groups, they give rise to the following:
\begin{equation*}
    \begin{tikzcd}[row sep = huge, column sep=huge]
        \displaystyle{\frac{\ainf (R_0) [u_{r+1}, v_{r+1}]}{(u_{r+1} v_{r+1} - \xi_{r+1})}} \arrow[r, "u_{r+1} \mapsto \xi_{r+1} \sigma \text{, } v \mapsto \sigma^{-1} "] \arrow[d,"u_{r+1} \mapsto u_{r+1} \text{, } v_{r+1} \mapsto 0", "\vartheta_{r+1} \text{-linear}"'] &[2in] \ainf (R_0) [\sigma, \sigma^{-1}] \arrow[d, "\sigma \mapsto \sigma", "\widetilde{\vartheta}_{r+1} \text{-linear}"'] \\
        \W_{r+1} (R_0) [u_{r+1}] \arrow[r, "u_{r+1} \mapsto \xi_{r+1}\sigma"] & \W_{r+1} (R_0) [\sigma, \sigma^{-1}] \\
        \displaystyle{\frac{\ainf (R_0) [u_{r+1}, v_{r+1}]}{(u_{r+1} v_{r+1} - \xi_{r+1})}} \arrow[r, dashed, "u_{r+1} \mapsto \sigma \text{, } v_{r+1} \mapsto \widetilde{\xi}_r \sigma^{-1}"] \arrow[d, "u_{r+1} \mapsto u_{r+1} \text{, } v_{r+1} \mapsto 0", "\vartheta_{r+1} \text{-linear}"'] & \ainf (R_0) [\sigma, \sigma^{-1}] \arrow[d, "\sigma \mapsto \sigma", "\widetilde{\vartheta}_{r+1} \text{-linear}"'] \\
        \W_{r+1} (R_0) [u_{r+1}] \arrow[r, dashed, "u_{r+1} \mapsto \sigma"] & \W_{r+1} (R_0) [\sigma, \sigma^{-1}] \\
    \end{tikzcd}
\end{equation*}
\end{enumerate}
\end{thm}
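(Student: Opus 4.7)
The strategy is to leverage Propositions 3.3--3.5, which give $\pi_* \TR^r(R_0;\Zp)^{hS^1} \simeq \ainf(R_0)[u_r,v_r]/(u_r v_r - \xi_r)$ and $\pi_* \TR^r(R_0;\Zp) \simeq \W_r(R_0)[u_r]$. Since every map in sight is a morphism of $\einfty$-ring spectra between even-concentrated graded rings, it suffices on homotopy groups to identify (a) the induced action on $\pi_0$ and (b) the images of the degree $\pm 2$ generators $u_{r+1}, v_{r+1}$. The multiplicative relation $u_{r+1} v_{r+1} = \xi_{r+1}$, together with the standard identities $\xi_{r+1} = \xi_r \cdot \varphi^{-r}(\xi)$, $\varphi(\xi_{r+1}) = \varphi(\xi) \cdot \xi_r$, and $\widetilde{\xi}_r = \varphi^r(\xi_r)$, will then force the scalar twists appearing in the advertised formulas.

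I would begin by reducing the lower rows of each diagram to the corresponding upper rows. By Proposition 3.5, the map $\TR^r(-;\Zp)^{hS^1} \to \TR^r(-;\Zp)$ is the quotient by $v_r$, which on $\pi_0$ realizes $\ainf(R_0) \twoheadrightarrow \W_r(R_0)$ via $\vartheta_r$. Each vertical face of the two main diagrams therefore commutes tautologically, and the lower-row formulas follow mechanically once the upper-row ones are in hand. The $\pi_0$-actions themselves are supplied by Hesselholt--Madsen's identification $\pi_0 \TR^r \simeq \W_r$ combined with Lemma 3.2: $\Res^{hS^1}$ is identity-linear on $\ainf(R_0)$ and $\F^{hS^1}$ is $\varphi$-linear, while the $\can$ and $\varphi^{hS^1}$ composites to $\TP$ land on $\ainf(R_0)$ via $\widetilde{\vartheta}_{r+1}$-linear maps, as recorded in \cite{BMS2}. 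This settles all the claimed linearity properties.

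For the action on the degree $\pm 2$ generators, I would work with the iterated pullback presentation
\begin{equation*}
\TR^{r+1}(R_0;\Zp)^{hS^1} \simeq \TC^-(R_0;\Zp) \times_{\TP(R_0;\Zp)} \cdots \times_{\TP(R_0;\Zp)} \TC^-(R_0;\Zp)
\end{equation*}
with $r+1$ copies of $\TC^-$, glued by $\can$ and $\varphi^{hS^1}$ as specified in the preliminaries. The generators $u_{r+1}, v_{r+1}$ correspond to tuples of $u$- and $v$-generators in the successive $\TC^-$ factors, whose adjacent coordinates differ by the scalar factors $\varphi^{-1}(\xi)$ forced by the pullback condition $\can(x_i) = \varphi^{hS^1}(x_{i+1})$, using $\can(u)=\xi\sigma$, $\can(v)=\sigma^{-1}$, $\varphi^{hS^1}(u)=\sigma$, $\varphi^{hS^1}(v)=\widetilde{\xi}\sigma^{-1}$ from \cite{BMS2}. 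Since $\Res^{hS^1}$ (respectively $\F^{hS^1}$) forgets the leftmost (respectively rightmost) $\TC^-$ factor per the definition in Section 2.2, the cumulative telescope of these adjacent shifts produces exactly the scalars $\varphi^{-r}(\xi)$ on $\Res(u_{r+1})$ and $\varphi(\xi)$ on $\F(v_{r+1})$. Part (2) is a direct compatibility of the projection to a single $\TC^-$ factor with postcomposition by $\can$ or $\varphi^{hS^1}$, and the formulas $u_{r+1} \mapsto \xi_{r+1}\sigma$, $v_{r+1} \mapsto \sigma^{-1}$ (respectively $u_{r+1} \mapsto \sigma$, $v_{r+1} \mapsto \widetilde{\xi}_r\sigma^{-1}$) fall out by the same ladder argument, with the scalars pinned down by the relations $u_{r+1} v_{r+1} = \xi_{r+1}$ and $\widetilde{\xi}_r = \varphi^r(\xi_r)$.

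The main obstacle is the combinatorial bookkeeping of how $u_{r+1}$ and $v_{r+1}$ sit inside the iterated pullback and how the $\varphi^{-1}(\xi)$-ladder propagates through $r+1$ factors, so that the correct cumulative shift appears in each formula. Once this ladder is set up cleanly, multiplicativity together with the $\pi_0$-compatibility of the previous step verifies all four commutative squares on homotopy groups, completing the proof.
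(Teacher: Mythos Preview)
Your proposal is correct and follows essentially the same route as the paper's own proof: both reduce the lower rows to the upper rows via Proposition~3.5 (the quotient by $v_r$), identify the $\pi_0$-action through the Fontaine-style diagrams of Lemma~3.2, and then determine the images of the degree $\pm 2$ generators using the iterated pullback presentation of $\TR^r(R_0;\Zp)^{hS^1}$ together with the explicit formulas for $\can$ and $\varphi^{hS^1}$ from \cite[Sec.~6]{BMS2}. The only difference is presentational: where the paper simply says the result ``follows from iteration of the arguments'' in Proposition~3.4, you spell out the $\varphi^{-1}(\xi)$-ladder that produces the cumulative scalars $\varphi^{-r}(\xi)$ and $\varphi(\xi)$, which is exactly the content the paper leaves implicit.
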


\begin{proof}
    This essentially follows from iteration of the arguments presented in the proof of Proposition \ref{graded ring}, together with the results of Proposition \ref{quotient}. First of all, via applying the latter, we pass from $\TR^r (R_0;\Zp)^{hS^1}$ to $\TR^r (R_0;\Zp)$, by mapping $v_r \mapsto 0$. It follows that the graded ring of $\TR^r (R_0;\Zp)$ is equivalent to:
    \begin{equation*}
        \pi_* \TR^r (R_0;\Zp) \simeq \W_r (R_0) [u_r]
    \end{equation*}

    The results follow after passage to homotopy groups and applying \cite[Sec. 6]{BMS2}, Proposition \ref{perfectoid TR}, as well as from the effect of the diagrams in \ref{Fontaine diagrams}, for the Fontaine style maps $\vartheta_r$, $\widetilde{\vartheta}_r$.

    In particular, on $\pi_0$, the diagrams in (1) correspond to the following diagrams for $\vartheta_r$ and its relation to the Restriction and Frobenius maps:
    \begin{equation*}
        \begin{tikzcd}[column sep=huge]
            \ainf (R_0) \arrow[r, "\id \text{, } \varphi"] \arrow[d, "\vartheta_r"'] & \ainf (R_0) \arrow[d, "\vartheta_r"] \\
            \W_r (R_0) \arrow[r, "\Res \text{, } \F"'] & \W_r (R_0)
        \end{tikzcd}
    \end{equation*}
    On the other hand, passing to the Tate constructions, the diagrams we obtain on $\pi_0$ correspond to passing from $\vartheta_r$ to $\widetilde{\vartheta}_r$:
    \begin{equation*}
        \begin{tikzcd}[column sep=huge]
            \ainf (R_0) \arrow[r, "\varphi^r"] \arrow[d, "\vartheta_r"] & \ainf (R_0) \arrow[d, "\widetilde{\vartheta}_r"] \\
            \W_r (R_0) \arrow[r] & \W_r (R_0)
        \end{tikzcd}
    \end{equation*}
\end{proof}

The following is a reformulation of our results, in the style of motivic filtrations:

\begin{prop}[Motivic filtrations in the perfectoid case] \label{motivic perfectoid} For a perfectoid ring $R_0$, the spectra $\TR^r (R_0;\Zp)^{hS^1} \to \TR^r (R_0;\Zp)$ are equipped with \emph{motivic filtrations}, which in this case are the double-speed Postnikov filtrations.

\begin{enumerate}[1)]
\item For $1 \leq r < \infty$, we define a multiplicative, decreasing, complete filtration on $\ainf (R_0)$, which we call the \emph{$r$-Nygaard filtration}. This is defined as:
\begin{equation*}
    \nr^{\geq i} \ainf (R_0) :=
    \begin{cases}
        \xi_r^i \ainf (R_0), & i \geq 0 \\[5pt]
        \ainf (R_0), & i < 0
    \end{cases}
\end{equation*}
The even homotopy groups (graded pieces for the motivic filtrations) of the spectra $\TR^r (R_0;\Zp)^{hS^1}$ and $\TR^r (R_0;\Zp)$ can be interpreted as follows, using the $r$-Nygaard filtration on $\ainf (R_0)$:
\begin{equation*}
    \begin{cases}
        \pi_* \TR^r (R_0;\Zp)^{hS^1} \simeq \bigoplus_{i \in \Z} \nr^{\geq i} \ainf (R_0) \{ i\} = \bigoplus_{i \in \Z} \xi_r^i \ainf (R_0) \{ i\} \\[5pt]
        \pi_* \TR^r (R_0;\Zp) = \bigoplus_{i \geq 0} \nr^i \ainf (R_0) \{ i\} = \bigoplus_{i \geq 0} \xi_r^i/\xi_r^{i+1} \ainf (R_0) \{ i\}
    \end{cases}
    \end{equation*}
The Restriction and Frobenius maps
\begin{equation*}
    \Res, \F : \TR^{r+1} (R_0;\Zp) \longrightarrow \TR^r (R_0;\Zp)^{hS^1}
\end{equation*}
induce natural maps on $\pi_{2i}$:
\begin{equation*}
    \Res, \F : \n_{r+1}^{\geq i} \ainf (R_0) \{ i\} \longrightarrow \nr^{\geq i} \ainf (R_0) \{ i\}
\end{equation*}
In particular, Restriction induces the natural embedding:
\begin{equation*}
    \Res : \n_{r+1}^{\geq i} \ainf (R_0) \{ i\} \hookrightarrow \nr^{\geq i} \ainf (R_0) \{ i\}
    \end{equation*}
while Frobenius corresponds to the following map:
\begin{gather*}
    \F : \n_{r+1}^{\geq n} \ainf (R_0) \longmapsto \big( \varphi(\xi_{r+1} )\big)^i \ainf (R_0) \{ i\} = \big( \varphi (\xi) \xi_r \big)^i \ainf (R_0) \{ i\} \hookrightarrow \nr^{\geq i} \ainf (R_0) \{ i\}
\end{gather*}
Passing to graded pieces, we also obtain graded versions of these:
\begin{equation*}
    \Res, \F : \n_{r+1}^i \ainf (R_0) \{ i\} \longrightarrow \nr^i \ainf (R_0) \{ i\}
\end{equation*}

\item Further mapping to $\TP (R_0;\Zp)$, the canonical and higher Frobenius maps
\begin{equation*}
    \can, \varphi^{hS^1} : \TR^r (R_0;\Zp) \longrightarrow \TC^- (R_0;\Zp) \longrightarrow \TP (R_0;\Zp)
\end{equation*}
induce natural maps on $\pi_{2i}$:
\begin{equation*}
    \can, \varphi_{r, i} : \nr^{\geq i} \ainf (R_0) \{ i\} \longrightarrow \ainf (R_0) \{ i\}
\end{equation*}
The canonical map induces the natural embedding:
\begin{equation*}
    \can : \nr^{\geq i} \ainf (R_0) \{ i\} \hookrightarrow \ainf (R_0) \{ i\}
\end{equation*}
On the other hand, the higher Frobenius $\varphi^{hS^1}$ induces an \emph{$r$-divided Frobenius}:
\begin{equation*}
    \varphi_{r, i} : \nr^{\geq i} \ainf (R_0) \{ i\} \longrightarrow \ainf (R_0) \{ i\}
\end{equation*}
This is defined away from $\xi_r$
\begin{equation*}
    \varphi_{r, i} : \ainf (R_0) \{ i\} \Bigg[ \frac{1}{\xi_r} \Bigg] \longrightarrow \ainf (R_0) \{ i\} \Bigg[ \frac{1}{\widetilde{\xi}_r} \Bigg]
\end{equation*}
and is related to the $r$-th iterated Frobenius via the formula:
\begin{equation*}
    \varphi^r = \xi_r^i \varphi_{r, i}
\end{equation*}
The $r$-divided Frobenius naturally comes from the following commutative diagram, where the top row gives rise to filtered invariants, while the lower one gives rise to graded ones:
\begin{equation*}
    \begin{tikzcd}[column sep=huge]
        \TR^r (R_0;Zp)^{hS^1} \arrow[r] \arrow[d] & \TC^- (R_0;\Zp) \arrow[r, "\varphi^{hS^1}"] \arrow[d] & \TP (R_0;\Zp) \arrow[d] \\
        \TR^r (R_0;\Zp) \arrow[r] & \THH (R_0;\Zp)^{hC_{p^{r-1}}} \arrow[r, "\varphi^{hC_{p^{r-1}}}"'] & \THH (R_0;\Zp)^{tC_{p^r}}
    \end{tikzcd}
\end{equation*}
\item Taking the limit over restriction maps yields equivalences:
\begin{gather*}
    \begin{cases}
        \tau_{[2i-1,2i]} \TR (R_0;\Zp)^{hS^1} \simeq \rlimr \; \xi_r^i \ainf (R_0) \{ i\} =: \ninf^{\geq i} \ainf (R_0) \{ i\}  \\[5pt]
        \tau_{[2n-1,2n]} \TR (R_0;\Zp) \simeq \rlimr \; \xi_r^i/\xi_r^{i+1} \ainf (R_0) \{ i\} =: \ninf^i \ainf (R_0) \{ i\}
    \end{cases}
\end{gather*}
In particular, we can identify:
\begin{align*}
    \begin{cases} \pi_{2i} \TR (R_0;\Zp)^{hS^1} \simeq \limr \nr^{\geq i} \ainf (R_0) \{ i\} \simeq \limr \xi_r^i \ainf (R_0) \{ i\}\\[5pt]
    \pi_{2i-1} \TR (R_0;\Zp)^{hS^1} \simeq \limr^1 \nr^i \ainf (R_0) \{ i\} \simeq \limr^1 \xi_r^n \ainf (R_0) \{ i\} \\[10pt]
    \pi_{2i} \TR (R_0;\Zp) \simeq \limr \nr^i \ainf (R_0) \{ i\} \\[5pt]
    \pi_{2i-1} \TR (R_0;\Zp) \simeq \limr^1 \nr^i \ainf (R_0) \{ i\}
    \end{cases}
\end{align*}
Notice that the $\rlim^1$ terms, and therefore the odd homotopy groups, vanish in the case $R_0 = \mathcal{O}_K$ is the ring of integers of a spherically complete perfectoid field $K$, thus identifying $\mu = \limr \xi_r$.\\
The Frobenius maps
\begin{equation*}
    \begin{tikzcd}[column sep=huge]
        \TR (R_0;\Zp)^{hS^1} \arrow[r, "\F^{hS^1}"] \arrow[d] & \TR (R_0;\Zp)^{hS^1} \arrow[d] \\
        \TR (R_0;\Zp) \arrow[r, "\F"] & \TR (R_0;\Zp)
    \end{tikzcd}
\end{equation*}
induce natural Frobenius endofunctors:
\begin{equation*}
    \begin{tikzcd}[column sep=huge]
        \ninf^{\geq i} \ainf (R_0) \{ i\} \arrow[r, "\F"] \arrow[d] & \ninf^{\geq i} \ainf (R_0) \arrow[d] \\
        \ninf^i \ainf (R_0) \{ i\} \arrow[r, "\F"] & \ainf (R_0) \{ i\}
    \end{tikzcd}
\end{equation*}
\end{enumerate}
\end{prop}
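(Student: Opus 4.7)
The plan is to repackage Theorem \ref{main perfectoid} and Proposition \ref{graded ring} in the language of motivic filtrations. Since for $1 \leq r < \infty$ both $\TR^r(R_0;\Zp)$ and $\TR^r(R_0;\Zp)^{hS^1}$ are concentrated in even degrees (Propositions \ref{perfectoid TR} and \ref{graded ring}), their double-speed Postnikov filtrations are automatically complete, multiplicative, decreasing, and $\Z$-indexed; I take these to be the motivic filtrations. The remaining content is then a translation of the graded-ring calculations already in hand.

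For part (1), the identification $\pi_{2i}\TR^r(R_0;\Zp)^{hS^1} \simeq \xi_r^i \ainf(R_0)\{i\}$ is exactly Proposition \ref{graded ring}, rephrased in terms of the $r$-Nygaard filtration $\nr^{\geq i}\ainf(R_0) := \xi_r^i \ainf(R_0)$. Passing through Proposition \ref{quotient} (quotienting by $v_r$) replaces $\pi_* \simeq \ainf(R_0)[u_r,v_r]/(u_r v_r - \xi_r)$ with $\W_r(R_0)[u_r]$, so at level $\pi_{2i}$ we recover the successive quotient $\xi_r^i/\xi_r^{i+1}\ainf(R_0)\{i\}$ via the identification $\vartheta_r: \ainf(R_0)/\xi_r \simeq \W_r(R_0)$. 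For the Restriction map, the formula $u_{r+1} \mapsto \varphi^{-r}(\xi) u_r$ from Theorem \ref{main perfectoid}, combined with the identity $\xi_{r+1} = \varphi^{-r}(\xi) \cdot \xi_r$, shows that on $\pi_{2i}$ the restriction becomes the natural embedding $\xi_{r+1}^i\ainf(R_0) \hookrightarrow \xi_r^i\ainf(R_0)$. For Frobenius, the $\varphi$-linearity of $u_{r+1} \mapsto u_r$, $v_{r+1} \mapsto \varphi(\xi) v_r$ yields that it sends $\xi_{r+1}^i \ainf(R_0)$ into $\varphi(\xi_{r+1})^i \ainf(R_0) = (\varphi(\xi)\xi_r)^i\ainf(R_0) \hookrightarrow \xi_r^i \ainf(R_0)$. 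The graded versions are obtained by setting $v_r = 0$.

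For part (2), I combine the above with the homotopy-group descriptions of $\TC^-(R_0;\Zp)$ and $\TP(R_0;\Zp)$ from \cite{BMS2}. The canonical map $\can$, with its explicit form $u_{r+1} \mapsto \xi_{r+1}\sigma$ and $\widetilde{\vartheta}_{r+1}$-linearity, induces the natural inclusion $\xi_r^i \ainf(R_0)\{i\} \hookrightarrow \ainf(R_0)\{i\}$ on $\pi_{2i}$. The higher Frobenius $\varphi^{hS^1}$ is governed by $u_{r+1} \mapsto \sigma$, which is $\varphi$-linear; tracking the image of a generator $\xi_r^i a$ yields $\varphi^r(a) \cdot \sigma^i$, which encodes $\varphi^r$ divided by $\xi_r^i$, i.e.\ $\varphi^r = \xi_r^i \varphi_{r,i}$, where the division makes sense after inverting $\xi_r$ upstairs and $\widetilde{\xi}_r = \varphi^r(\xi_r)$ downstairs.

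For part (3), taking $\rlimr$ of the even-concentrated towers $\tau_{[2i-1,2i]}\TR^r(R_0;\Zp)^{hS^1}$ and $\tau_{[2i-1,2i]}\TR^r(R_0;\Zp)$ yields two-term complexes, whose homotopy groups are computed by the usual $\lim$/$\lim^1$ exact sequence applied to the towers $\{\xi_r^i\ainf(R_0)\{i\}\}_r$ (with restriction maps the natural inclusions from part (1)) and its graded analogue. The vanishing of the $\lim^1$ terms in the spherically complete case $R_0 = \mathcal{O}_K$ follows from $\xi_\infty = \mu$ and $\ainf(R_0)/\mu \simeq \W(R_0)$, as in Construction \ref{Fontaine maps}. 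Finally, the Frobenius maps of Theorem \ref{main perfectoid} commute with Restriction in the appropriate sense, so they assemble into well-defined endomorphisms of the restriction limit, producing the displayed Frobenius endofunctors on $\ninf^{\geq i}\ainf(R_0)\{i\}$ and its associated graded. The main obstacle throughout is purely a bookkeeping one: carefully tracking the twists by powers of $\varphi$ and the distinct elements $\xi$, $\varphi(\xi)$, $\xi_r$, and $\widetilde{\xi}_r$; all of these are already accounted for in Theorem \ref{main perfectoid}, so beyond this the proof is essentially a change of notation.
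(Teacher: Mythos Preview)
Your proposal is correct and follows essentially the same approach as the paper: for finite $r$ the paper simply says the statements are a direct reformulation of Theorem \ref{main perfectoid}, and for $r=\infty$ it uses the iterated pullback description of $\TR(R_0;\Zp)^{hS^1}$ to obtain exact sequences which it then identifies as $\lim/\lim^1$ sequences, exactly as you do. Your write-up is in fact more explicit than the paper's in tracking the formulas for $\Res$, $\F$, $\can$, and $\varphi^{hS^1}$ through the graded-ring descriptions, but the underlying argument is the same.
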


\begin{proof}
    The statements for finite $1 \leq r < \infty$ are a direct reformulation of Theorem \ref{main perfectoid}. For the case $r = \infty$, taking the limit over restriction maps, we get that the graded pieces for the double-speed Postnikov filtrations give rise to the following two-term complexes for $\TR$ and its $S^1$-homotopy fixed points:
    \begin{equation*}
        \tau_{[2i-1,2i]} \TR (R_0;\Zp)^{hS^1} \longrightarrow \tau_{[2i-1,2i]} \TR (R_0;\Zp)
    \end{equation*}
    Remember that the iterated pullback description of $\TR (R_0;\Zp)^{hS^1}$ is:
    \begin{equation*}
        \TR (R_0;\Zp)^{hS^1} \simeq \dots \times_{\TP (R_0;\Zp)} \TC^- (R_0;\Zp) \times_{\TP (R_0;\Zp)} \dots \times_{\TP (R_0;\Zp)} \TC^- (R_0;\Zp)
    \end{equation*}
    Passing to homotopy groups, we obtain the following exact sequences:
    \begin{equation*}
        \begin{tikzcd}[row sep=small]
            0 \arrow[r] & \pi_{2i} \TR (R_0;\Zp)^{hS^1} \arrow[r] & \displaystyle{\prod \pi_{2i} \TC^- (R_0;\Zp)} \arrow[r] & \, \\
            \, \arrow[r] & \displaystyle{\prod \pi_{2i} \TP (R_0;\Zp)} \arrow[r] & \pi_{2i-1} \TR (R_0;\Zp)^{hS^1} \arrow[r] & 0
        \end{tikzcd}
    \end{equation*}
    This is equivalent to:
    \begin{equation*}
        \begin{tikzcd}
            0 \arrow[r] & \pi_{2i} \TR (R_0;\Zp)^{hS^1} \arrow[r] & \displaystyle{\prod \n^{\geq i} \ainf (R_0) \{ i\} } \arrow[r] & \, \\
            \, \arrow[r] & \displaystyle{\prod \pi_{2i} \ainf (R_0) \{ i\}} \arrow[r] & \pi_{2i-1} \TR (R_0;\Zp)^{hS^1} \arrow[r] & 0
        \end{tikzcd}
    \end{equation*}
    which we identify with a $\lim^1$ sequence. In particular, we have the following equivalences:
    \begin{equation*}
        \begin{cases}
            \tau_{[2i-1,2i]} \TR (R_0;\Zp)^{hS^1} \simeq \rlimr \nr^{\geq i} \ainf (R_0) \{ i\} =: \ninf^{\geq i} \ainf (R_0) \{ i\} \\[5pt]
            \pi_{2i} \TR (R_0;\Zp)^{hS^1} \simeq \limr \nr^{\geq i} \ainf (R_0) \{ i\}, \quad \pi_{2i-1} \TR (R_0;\Zp)^{hS^1} \simeq \limr^1 \nr^{\geq i} \ainf (R_0) \{ i\}
        \end{cases}
    \end{equation*}
    Passing to $\TR^r (R_0;\Zp)$ by taking quotient with $v_r$ and taking the limit with respect to the Restriction maps, we also have that:
    \begin{equation*}
        \begin{cases}
            \tau_{[2i-1,2i]} \TR (R_0;\Zp) \simeq \rlimr \nr^i \ainf (R_0) \{ i\} =: \ninf^i \ainf (R_0) \{ i\} \\[5pt]
            \pi_{2i} \TR (R_0;\Zp) \simeq \limr \nr^i \ainf (R_0) \{ i\}, \quad \pi_{2i-1} \TR (R_0;\Zp) \simeq \limr^1 \nr^i \ainf (R_0) \{ i\}
        \end{cases}
    \end{equation*}
    The remaining statements are a direct consequence of these identifications.
\end{proof}

\newpage

\section{The general case}

In this section we focus on explaining the main theorem of this work. Based on the calculations for perfectoid rings and some general properties of $\TR$, we understand the motivic filtrations in the case of quasiregular-semperfectoid rings, which we later extend via quasisyntomic descent.

\subsection{Calculations for QRSPerfd rings}

In what follows, we study the motivic filtrations of invariants associated with $\TR$ of quasiregular-semiperfectoid rings. Let us fix a quasiregular-semiperfectoid ring $S$. Suppose we also make a choice of a perfectoid base $R_0 \to S$, mostly for simplicity.

For what follows, given any $R_0$-algebra A, we view:
\begin{equation*}
    \begin{cases}
        \pi_* \TR^r (A;\Zp)^{hS^1} & \text{as a graded algebra over } \pi_* \TR^r (R_0;\Zp)^{hS^1} \simeq \ainf (R_0) [u_r, v_r]/ (u_r v_r - \xi_r) \\[5pt]
    \pi_* \TR^r (A;\Zp) & \text{as a graded algebra over }  \pi_* \TR^r (R_0;\Zp) \simeq \W_r (R_0) [u_r]
    \end{cases}
\end{equation*}

\begin{thm}[Motivic filtration in the QRSPerfd case]
    Let $S$ be a quasiregular-semiperfectoid ring over a fixed perfectoid base $R_0 \to S$. The following hold:
    
    \begin{enumerate}[1)]
    \item For $1 \leq r < \infty$, the spectra $\TR^r (S;\Zp)^{hS^1} \to \TR^r (S;\Zp)$ admit functorial, complete and exhaustive, descending, multiplicative $\Z$-indexed (resp. $\N$-indexed) \emph{motivic filtrations}, with:
    \begin{equation*}
        \begin{cases}
            \grM^i \TR^r (S;\Zp)^{hS^1} \simeq \tau_{[2i-1,2i]} \TR^r (S;\Zp)^{hS^1} \\[5pt]
            \grM^i \TR^r (S;\Zp) \simeq \tau_{[2i-1,2i]} \TR^r (S;\Zp)
        \end{cases}
    \end{equation*}
    In particular, the associated spectral sequences calculating $\TR^r (S;\Zp)^{hS^1}$ and $\TR^r (S;\Zp)^{tS^1} \simeq \TP (S;\Zp)$ equip:
    \begin{equation*}
        \Delc_S \simeq \pi_0 \TR^r (S;\Zp)^{hS^1} \simeq \pi_0 \TR^r (S;\Zp)^{tS^1} \simeq \pi_0 \TP (S;\Zp)
    \end{equation*}
    with the same complete, descending $\N$-indexed filtration $\nr^{\geq \bullet} \Delc_S$, which we call the $r$-Nygaard filtration.
    \vspace{0.05in}
    
    \item Via the multiplicative structure of $\pi_* \TR^r (R_0;\Zp)^{hS^1}$, one can identify $\nr^{\geq i} \Delc_S \subset \Delc_S = \pi_0 \TR^r (S;\Zp)^{hS^1}$, with $\pi_{2i} \TR^r (S;\Zp)^{hS^1}$, via multiplication with $v_r^i \in \pi_{-2i} \TR^r (S;\Zp)^{hS^1}$. In particular, we have the following descriptions of the even homotopy groups, setting $\nr^{\geq i} \Delc_S = \Delc_S$, for $i \leq 0$:
    \begin{equation*}
        \pi_{2i} \TR^r (S;\Zp)^{hS^1} \simeq \nr^{\geq i} \Delc_S \{ i\}
    \end{equation*}
    Taking quotient with $v_r \in \pi_{-2} \TR^r (S;\Zp)^{hS^1}$, we pass to $\TR^r (S;\Zp)$, thus obtaining the following identification for its even homotopy groups:
    \begin{equation*}
        \pi_{2i} \TR^r (S;\Zp) \simeq \nr^i \Delc_S \{ i\}
    \end{equation*}
    where, in particular, we have that:
    \begin{equation*}
        \nr^0 \Delc_S \simeq \W_r (S)
    \end{equation*}
    \vspace{0.05in}
    
    \item The Restriction and Frobenius maps
    \begin{equation*}
        \begin{tikzcd}[column sep=huge]
            \TR^{r+1} (S;\Zp)^{hS^1} \arrow[r, "\Res^{hS^1} \text{, } \F^{hS^1}"] \arrow[d] &[30pt] \TR^r (S;\Zp)^{hS^1} \arrow[d] \\
            \TR^{r+1} (S;\Zp) \arrow[r, "\Res \text{, } \F"] & \TR^r (S;\Zp)
        \end{tikzcd}
    \end{equation*}
    induce natural maps on the filtered and graded pieces of the $r$-Nygaard filtered $\Delc_S$:
    \begin{equation*}
        \begin{tikzcd}[column sep=huge]
            \n_{r+1}^{\geq i} \Delc_S \{ i\} \arrow[r, "\Res \text{, } \F"] \arrow[d] & \nr^{\geq i} \Delc_S \{ i\} \arrow[d] \\
            \n_{r+1}^i \arrow[r, "\Res \text{, } \F"] & \nr^i \Delc_S \{ i\}
        \end{tikzcd}
    \end{equation*}
    Mapping further to $\TP (S;\Zp)$:
    \begin{equation*}
        \begin{tikzcd}[column sep=huge]
            \TR^r (S;\Zp)^{hS^1} \arrow[r] & \TC^- (S;\Zp) \arrow[r, "\can \text{, } \varphi^{hS^1}"] &[30pt] \TP (S;\Zp)
        \end{tikzcd}
    \end{equation*}
    we obtain maps on even homotopy groups $\can, \varphi_{r, i} : \pi_{2i} \TR^r (S;\Zp)^{hS^1} \to \TP (S;\Zp)$, which are equivalent to the canonical injection
    \begin{equation*}
        \can : \nr^{\geq i} \Delc_S \{ i\} \hookrightarrow \Delc_S \{ i\}
    \end{equation*}
    and to the \emph{$r$-divided prismatic Frobenius}:
    \begin{equation*}
        \varphi_{r, i} : \nr^{\geq i} \Delc_S \{ i\} \longrightarrow \Delc_S \{ i\}
    \end{equation*}
    which relates to the $r$-th iteration of prismatic Frobenius, which maps the $r$-Nygaard filtration to the $\widetilde{\xi}_r$-adic filtration, via the formula:
    \begin{equation*}
        \varphi^r = \widetilde{\xi}_r^i \varphi_{r, i}
    \end{equation*}
    Using the commutative diagram
    \begin{equation*}
        \begin{tikzcd}[column sep=huge]
            \TR^r (S;\Zp)^{hS^1} \arrow[r] \arrow[d] & \TC^- (S;\Zp) \arrow[r, "\varphi^{hS^1}"] \arrow[d] &[30pt] \TP (S;\Zp) \arrow[d] \\
            \TR^r (S;\Zp) \arrow[r] & \THH (S;\Zp)^{hC_{p^{r-1}}} \arrow[r, "\varphi^{hC_{p^{r-1}}}"] & \THH (S;\Zp)^{tC_{p^r}}
        \end{tikzcd}
    \end{equation*}
    one can identify the effect of $\varphi^{hC_{p^{r-1}}}$, as the graded version of $\varphi^{hS^1}$. In particular, passing to even homotopy groups, we obtain a graded prismatic Frobenius map from the associated graded pieces of the $r$-Nygaard filtration to the \emph{$r$-Hodge--Tate cohomology} $\Del_S^{\HT,r} \simeq \Delc_S / \widetilde{\xi}_r$, obtained from $\THH (S;\Zp)^{tC_{p^r}}$:
    \begin{equation*}
        \gr \varphi_{r, i} : \nr^i \Delc_S \{ i\} \longrightarrow \Del_S^{\HT,r} \{ i\} = \Delc_S/\widetilde{\xi}_r \{ i\}
    \end{equation*}
    \vspace{0.05in}
    
    \item Taking the limit over the Restriction maps yields the following identifications:
    \begin{equation*}
        \begin{cases}
            \pi_{2i} \TR (S;\Zp)^{hS^1} \simeq \limr \nr^{\geq i} \Delc_S \{ i\} \\[5pt]
            \pi_{2i} \TR (S;\Zp) \simeq \limr \nr^i \Delc_S \{ i\}
        \end{cases}
    \end{equation*}
    We have natural Frobenius maps:
    \begin{equation*}
        \begin{tikzcd}[column sep=huge]
            \TR (S;\Zp)^{hS^1} \arrow[r, "\F^{hS^1}"] \arrow[d] &[10pt] \TR (S;\Zp)^{hS^1} \arrow[d] \\
            \TR (S;\Zp) \arrow[r, "\F"] & \TR (S;\Zp)
        \end{tikzcd}
    \end{equation*}
    Passing to even homotopy groups, induces Frobenius endofunctors on filtered and graded pieces:
    \begin{equation*}
        \begin{tikzcd}[column sep=huge]
            \limr \nr^{\geq i} \Delc_S \{ i\} \arrow[r, "\F"] \arrow[d] &[20pt] \limr \nr^{\geq i} \Delc_S \{ i\} \arrow[d] \\
            \limr \nr^i \Delc_S \{ i\} \arrow[r, "\F"] & \limr \nr^i \Delc_S \{ i\}
        \end{tikzcd}
    \end{equation*}
    
    \end{enumerate}
\end{thm}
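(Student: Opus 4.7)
The plan is to reduce the theorem to the perfectoid calculations of the previous section via the iterated pullback description of $\TR^r(-;\Zp)^{hS^1}$, combined with the Bhatt--Morrow--Scholze identifications of $\TC^-$ and $\TP$ of quasiregular-semiperfectoid rings, and then to transport everything from $(\TR^r)^{hS^1}$ to $\TR^r$ via the Nikolaus--Scholze $v_r$-quotient trick (Proposition~\ref{quotient}).

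First I would invoke that for $S$ quasiregular-semiperfectoid the spectra $\TC^-(S;\Zp)$ and $\TP(S;\Zp)$ are concentrated in even degrees with $\pi_{2i}\TC^-(S;\Zp)\simeq \n^{\geq i}\Delc_S\{i\}$ and $\pi_{2i}\TP(S;\Zp)\simeq \Delc_S\{i\}$, and that $\can$ and $\varphi^{hS^1}$ realise on $\pi_{2i}$ the natural inclusion and the divided prismatic Frobenius $\varphi_i$ respectively, following \cite{BMS2}. Feeding this into the iterated pullback
\[
\TR^r(S;\Zp)^{hS^1}\simeq \TC^-(S;\Zp)\times_{\TP(S;\Zp)}\cdots\times_{\TP(S;\Zp)}\TC^-(S;\Zp)
\]
should force even concentration of $\TR^r(S;\Zp)^{hS^1}$ and yield
\[
\pi_{2i}\TR^r(S;\Zp)^{hS^1}\simeq \n^{\geq i}\Delc_S\{i\}\times_{\Delc_S\{i\}}\cdots\times_{\Delc_S\{i\}}\n^{\geq i}\Delc_S\{i\}=\nr^{\geq i}\Delc_S\{i\},
\]
by the very definition of the $r$-Nygaard filtration. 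Even concentration then forces the motivic filtration of part~(1) to coincide with the double-speed Postnikov filtration, and the filtration it induces on $\pi_0\simeq\Delc_S$ is by construction $\nr^{\geq\bullet}\Delc_S$.

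For parts~(2) and~(3) I would invoke the $v_r$-quotient equivalence $\TR^r(S;\Zp)\simeq\TR^r(S;\Zp)^{hS^1}/v_r$ of Proposition~\ref{quotient}. On even homotopy groups the action of $v_r$ is identified, by base change along $R_0\to S$ from the perfectoid computation of Theorem~\ref{main perfectoid}, with the canonical inclusion $\nr^{\geq i+1}\Delc_S\hookrightarrow \nr^{\geq i}\Delc_S$. The resulting cofibre sequence then gives $\pi_{2i}\TR^r(S;\Zp)\simeq \nr^i\Delc_S\{i\}$ together with the vanishing of odd homotopy, and Hesselholt--Madsen forces $\nr^0\Delc_S\simeq \W_r(S)$. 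The Restriction, Frobenius, Verschiebung, $\can$, and $\varphi^{hS^1}$ maps at the level of filtered and graded pieces are then obtained by transporting the corresponding $\TR^r$-level maps through these identifications, and all compatibility diagrams commute because they already do so over $R_0$ by Theorem~\ref{main perfectoid}. The $r$-Hodge--Tate identification $\Del^{\HT,r}_S\simeq \Delc_S/\widetilde{\xi}_r$ uses that $\THH(S;\Zp)^{tC_{p^r}}$ has even homotopy $\Delc_S/\widetilde{\xi}_r\{i\}$, inherited from the $\widetilde{\xi}_r$-killing behaviour of successive $C_p$-Tate constructions computed perfectoidly. Part~(4) then follows formally by taking the inverse limit over Restriction of the finite-$r$ statements, with the $\lim^1$-terms accounting for the odd homotopy groups.

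The main obstacle will be establishing even concentration of $\TR^r(S;\Zp)^{hS^1}$ for QRSPerfd $S$ beyond the perfectoid case, since a priori an iterated pullback of even spectra can produce odd classes through $\lim^1$ contributions. This will require showing that the parallel maps $\can,\varphi^{hS^1}:\TC^-(S;\Zp)\rightrightarrows \TP(S;\Zp)$ in the iterated pullback induce surjections on even homotopy after quotienting by the appropriate filtration piece, which in turn reduces to the Mittag--Leffler property of the Nygaard filtration on $\Delc_S$ and to the divisibility formula $\varphi^r=\widetilde{\xi}_r^i\varphi_{r,i}$. Both facts are quasisyntomic-local and inherit from the perfectoid base case along $R_0\to S$, so the difficulty is essentially one of careful bookkeeping rather than of a genuinely new idea.
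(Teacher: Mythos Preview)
Your overall strategy—the iterated pullback description, the BMS2 identifications of $\TC^-$ and $\TP$, and the $v_r$-quotient passage to $\TR^r$—is exactly the paper's. The disagreement is over even concentration.

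You build your argument around establishing that $\TR^r(S;\Zp)^{hS^1}$ is concentrated in even degrees for every quasiregular-semiperfectoid $S$, and you flag this as the main obstacle. But the theorem does not assert this, and the paper does not prove it. The statement only identifies the \emph{even} homotopy groups; the graded pieces of the motivic filtration are the two-term complexes $\tau_{[2i-1,2i]}$, precisely because $\pi_{2i-1}$ may be nonzero. The paper simply reads off $\pi_{2i}\TR^r(S;\Zp)^{hS^1}\simeq \nr^{\geq i}\Delc_S\{i\}$ as the kernel in the exact sequence
\[
0 \to \pi_{2i}\TR^r(S;\Zp)^{hS^1} \to \prod_{k=1}^{r} \n^{\geq i}\Delc_S\{i\} \to \prod_{k=1}^{r-1}\Delc_S\{i\} \to \pi_{2i-1}\TR^r(S;\Zp)^{hS^1} \to 0,
\]
with no claim about the cokernel. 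The passage to $\pi_{2i}\TR^r(S;\Zp)\simeq \nr^i\Delc_S\{i\}$ then goes either via the $v_r$-quotient or via the parallel iterated pullback for $\TR^r$ itself, again only tracking even groups.

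Your claim that the surjectivity needed for odd vanishing ``inherits from the perfectoid base case along $R_0\to S$'' and is ``essentially bookkeeping'' is too optimistic. The paper handles odd vanishing in a separate step (the quasisyntomic descent subsection), and only \emph{locally} for the quasisyntomic topology: one must pass to a further cover $S\to S'$ and invoke the Bhatt--Scholze vanishing theorem \cite[Sec.~14]{BS} to force surjectivity of the map $\alpha$. This is a genuine external input, not a consequence of the perfectoid computation. In positive characteristic the surjectivity does hold on the nose (via \cite[Sec.~8]{BMS2}), which is why Theorem~\ref{thm6} gets honest even concentration; but in mixed characteristic you should not expect it for arbitrary QRSPerfd $S$. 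So rather than fighting for even concentration here, just record the even homotopy groups as the paper does and defer the odd vanishing to the descent argument.
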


\begin{proof}
    The basics of all calculations follow from the iterated pullback descriptions:
    \begin{equation*}
        \TR^r (S;\Zp)^{hS^1} \simeq \TC^- (S;\Zp) \times_{\TP (S;\Zp)} \dots \times_{\TP (S;\Zp)} \TC^- (S;\Zp)
    \end{equation*}
    and
    \begin{equation*}
        \TR^r (S;\Zp) \simeq \THH (S;\Zp)^{hC_{p^{r-1}}} \times_{\THH (S;\Zp)^{tC_{p^{r-1}}}} \dots \times_{\THH (S;\Zp)^{tC_p}} \THH (S;\Zp)
    \end{equation*}
    
    These can be re-written as the fibers:
    \begin{equation*}
        \begin{tikzcd}
            \TR^r (S;\Zp)^{hS^1} \arrow[r] & \displaystyle{ \prod_{1 \leq k \leq r} \TC^- (S;\Zp)} \arrow[r] & \displaystyle{ \prod_{1 \leq k \leq r-1} \TP (S;\Zp)}
        \end{tikzcd}
    \end{equation*}
    and
    \begin{equation*}
        \begin{tikzcd}
            \TR^r (S;\Zp) \arrow[r] & \displaystyle{\prod_{1 \leq k \leq r} \THH (S;\Zp)^{hC_{p^k}}} \arrow[r] & \displaystyle{\prod_{1 \leq k \leq r-1} \THH (S;\Zp)^{tC_{p^k}} }
        \end{tikzcd}
    \end{equation*}
    
    In particular, the motivic filtrations come from the double-speed Postnikov filtrations, whose graded pieces are the two term complexes:
    \begin{equation*}
        \tau_{[2i-1, 2i]} \TR^r (S;\Zp)^{hS^1}, \qquad \tau_{[2i-1, 2i]} \TR (S;\Zp)
    \end{equation*}

    More specifically, the homotopy groups fit in exact sequences:
    \begin{equation*}
        \begin{tikzcd}[row sep=tiny]
            0 \arrow[r] & \pi_{2i} \TR^r (S;\Zp)^{hS^1} \arrow[r] & \displaystyle{ \prod_{1 \leq k \leq r} \pi_{2i} \TC^- (S;\Zp)} \arrow[r] & \, \\
            \, \arrow[r] & \displaystyle{\prod_{1 \leq k \leq r-1} \pi_{2i} \TP (S;\Zp)} \arrow[r] & \pi_{2i-1} \TR^r (S;\Zp)^{hS^1} \arrow[r] & 0
        \end{tikzcd}
    \end{equation*}
    or, equivalently
    \begin{equation*}
        \begin{tikzcd}[row sep=tiny]
            0 \arrow[r] & \pi_{2i} \TR^r (S;\Zp)^{hS^1} \arrow[r] & \displaystyle{ \prod_{1 \leq k \leq r} \n^{\geq i} \Delc_S \{ i\} } \arrow[r] & \, \\
            \, \arrow[r] & \displaystyle{\prod_{1 \leq k \leq r-1} \Delc_S \{ i\} } \arrow[r] & \pi_{2i-1} \TR^r (S;\Zp)^{hS^1} \arrow[r] & 0
        \end{tikzcd}
    \end{equation*}
    and
    \begin{equation*}
        \begin{tikzcd}[row sep=tiny]
            0 \arrow[r] & \pi_{2i} \TR^r (S;\Zp) \arrow[r] & \displaystyle{\prod_{1 \leq k \leq r} \pi_{2i} \THH (S;\Zp)^{hC_{p^k}}} \arrow[r] & \, \\
            \, \arrow[r] & \displaystyle{\prod_{1 \leq k \leq r-1} \pi_{2i} \THH (S;\Zp)^{tC_{p^k}}} \arrow[r] & \pi_{2i-1} \TR^r (S;\Zp) \arrow[r] & 0
        \end{tikzcd}
    \end{equation*}

    From the first of these exact sequences it follows that $\pi_{2i} \TR^r (S;\Zp)^{hS^1}$ is identified as the $i$-th layer of the $r$-Nygaard filtration on $\Delc_S \{ i\}$, which is defined via the following iterated pullback diagram:
    \begin{equation*}
        \nr^{\geq i} \Delc_S \{ i\} \simeq \n^{\geq i} \Delc_S \{ i\} \times_{\Delc_S \{ i\}} \dots \times_{\Delc_S \{ i\}} \n^{\geq i} \Delc_S
    \end{equation*}

    If, for simplicity, we assume that to be working over a fixed perfectoid $R_0 \to S$, it follows that we have the equivalent description:
    \begin{equation*}
        \nr^{\geq i} \Delc_S \simeq \Bigg\{ x \in \Delc_S \; \Big| \; \varphi^{ri} (x) \in \widetilde{\xi}_r^i \Delc_S \Bigg\}
    \end{equation*}

    To go back to $\TR^r (S;\Zp)$, we let $v_r \mapsto 0$. Through this, we obtain the identification:
    \begin{equation*}
        \pi_{2i} \TR^r (S;\Zp) \simeq \nr^i \Delc_S \{ i\}
    \end{equation*}

    Finally, the identification regarding $\Delc_S / \xi_r$ is a direct corollary of Proposition \ref{fibre sequences for perfectoids}, in analogy with \cite{BMS2}.

    The rest of the claims are a direct application of induction on $1 \leq r < \infty$, using the iterated pullback descriptions for $\TR^r (S;\Zp)^{hS^1}$ and $\TR^r (S;\Zp)$, together with the base cases on perfectoids:
    \begin{equation*}
        \begin{cases}
            \grM^i \TR^r (R_0;\Zp)^{hS^1} \simeq \nr^{\geq i} \ainf (R_0) \{ i\} \\[5pt]
            \grM^i \TR^r (R_0;\Zp) \simeq \nr^i \ainf (R_0) \{ i\}
        \end{cases}
    \end{equation*}
\end{proof}

\subsection{Applying quasisyntomic descent} In the previous section we managed to provide an overview of the nature of the invariants $\TR^r (S;\Zp)^{hS^1} \to \TR^r (S;\Zp)$, $1 \leq r \leq \infty$, for a given quasiregular-semiperfectoid ring $S$. Following the road paved by \cite{BMS2}, we can now extend to the quasisyntomic (and even animated) case. The main steps are provided in what follows.
\begin{proof}[Proof of Theorem \ref{thm3}]
    For simplicity, we work over a fixed perfectoid ring $R_0 \to S$.
    As the category of quasiregular-semiperfectoid rings provide a basis for the quasisyntomic topology, most of the claims follow in a completely analogous way to the main theorem of \cite{BMS2}. In particular, we apply quasisyntomic descent to the two term complexes:
    \begin{equation*}
        \tau_{[2i-1, 2i]} \TR^r (S;\Zp)^{hS^1}, \qquad \tau_{[2i-1,2i]} \TR^r (S;\Zp)
    \end{equation*}
    for $S$ quasiregular-semiperfectoid and the associated results of the previous subsection.

    Regarding the odd homotopy groups of our invariants, remember that for $S$ quasiregular-semiperfectoid, these fit in the exact sequence:
    \begin{equation*}
        \begin{tikzcd}[row sep=tiny]
            0 \arrow[r] & \nr^{\geq i} \Delc_S \{ i\} \arrow[r] & \displaystyle{\prod_{1 \leq k \leq r} \n^{\geq i} \Delc_S \{ i\}} \arrow[r, "\alpha"] & \, \\
            \, \arrow[r, "\alpha"] & \displaystyle{\prod_{1 \leq k \leq r-1} \Delc_S \{ i\}} \arrow[r] & \pi_{2i-1} \TR^r (S;\Zp)^{hS^1} \arrow[r] & 0
        \end{tikzcd}
    \end{equation*}
    Pre-composing the map $\alpha$, with the diagonal map:
    \begin{equation*}
        \mathrm{diag}: \n^{\geq i} \Delc_S \{ i\} \longrightarrow \prod_{1 \leq k \leq r} \n^{\geq i} \Delc_S \{ i\}
    \end{equation*}
    we are able to use the vanishing theorem of Bhatt-Scholze \cite[Sec. 14]{BS}. In particular, by directly applying that result, there exists a suitable quasisyntomic cover $S' \to S$, for which $\alpha \circ \mathrm{diag}$ is surjective. Hence, the same is true for $\alpha$, as well, from which the local vanishing of the cokernel follows.

    Therefore, locally for the quasisyntomic topology, we have that:
    \begin{equation*}
        \grM^i \TR^r (-;\Zp)^{hS^1} \simeq \nr^{\geq i} \Delc_{(-)} \{ i\} [2i]
    \end{equation*}
    Hence, by letting $v_r \mapsto 0$, we also have the analogous result for $\TR^r$, locally in the quasisyntomic topology:
    \begin{equation*}
        \grM^i \TR^r (-;\Zp) \simeq \nr^i \Delc_{(-)} \{ i\} [2i]
    \end{equation*}
    Taking the limit over restriction maps, in analogy with the perfectoid case, we also obtain:
    \begin{equation*}
        \begin{cases}            
        \grM^i \TR (-;\Zp)^{hS^1} \simeq \rlimr \nr^{\geq i} \Delc_{(-)} \{ i\} [2i] =: \ninf^{\geq i} \Delc_{(-)} \{ i\} [2i] \\[5pt]
        \grM^i \TR (-;\Zp) \simeq \rlimr \nr^i \Delc_{(-)} \{ i\} [2i] =: \ninf^i \Delc_{(-)} \{ i\} [2i]
        \end{cases}
    \end{equation*}

    As in \cite[Sec. 6]{APC}, it is possible to left Kan extend the motivic filtrations, and thus all associated prismatic invariants, from the quasisyntomic to the animated case. However, the motivic filtrations need not be exhaustive in this case.
\end{proof}

Following this, the proof of Theorem \ref{thm4} is a direct consequence of the above:

\begin{proof}[Proof of Theorem \ref{thm4}]
    We consider the invariant
    \begin{equation*}
        \widetilde{\TC}^r (-;\Zp) := \fib \Big( \Res^{hS^1} - \F^{hS^1} : \TR^r (-;\Zp)^{hS^1} \longrightarrow \TR^{r-1} (-;\Zp)^{hS^1} \Big)
    \end{equation*}
    Since, locally for the quasisyntomic topology we have that:
    \begin{equation*}
        \grM^i \TR^r (-;\Zp)^{hS^1} \simeq \nr^{\geq i} \Delc_{(-)} \{ i\} [2i]
    \end{equation*}
    it follows that by taking fibers, $\widetilde{\TC}^r$ is equipped with a motivic filtration, whose description, locally for the quasisyntomic topology, is the following:
    \begin{equation*}
        \grM^i \widetilde{\TC}^r (-;\Zp) \simeq \Big( \Res^{hS^1} - \F^{hS^1} : \nr^{\geq i} \Delc_{(-)} \{ i\} [2i] \longrightarrow \n_{r-1} \Delc_{(-)} \{ i\} [2i] \Big)
    \end{equation*}
    We proceed similarly for $\TC^r$.
\end{proof}

\newpage

\section{The $r$-Nygaard filtered absolute prismatic cohomology}

In this section we reconsider the $r$-Nygaard filtration on non-completed prismatic cohomology, from a more algebraic perspective. We also discuss the the $r$-Hodge--Tate cohomology, as well as relative versions of these invariants. In particular, we mainly address the contents of Theorems \ref{thm1} and \ref{thm2}. Finally, we provide some insights regarding ways in which the theory of the de Rham--Witt complex fits in the prismatic setting.

\subsection{The absolute case}

\begin{defn}[The $r$-Nygaard filtration]
    Let $S$ be an animated ring. For $1 \leq r \leq \infty$, we consider the following iterated pullback construction on its absolute prismatic cohomology $\Del_S \{ i\}$, along the canonical inclusion $\iota : \n^{\geq i} \Del_S \{ i\} \hookrightarrow \Del_S \{ i\}$ and the divided prismatic Frobenius map $\varphi_i : \n^{\geq i} \Del_S \{ i\} \to \Del_S \{ i\}$:
    \begin{equation*}
        \nr^{\geq i} \Del_S \{ i\} := \n^{\geq i} \Del_S \{ i\} \times_{\Del_S \{ i\}} \dots \times_{\Del_S \{ i\}} \n^{\geq i} \Del_S \{ i\} 
    \end{equation*}
    This yields a decreasing filtration on absolute prismatic cohomology $\nr^{\geq i} \Del_S \{ i\}$, which we call the \emph{$r$-Nygaard filtration}.
\end{defn}

These invariants come equipped with a number of additional structure. Let us begin by first discussing the case $1 \leq r < \infty$. One observes that there are canonical Restriction, Frobenius, and Verschiebung maps:
\begin{equation*}
    \begin{cases}
        \Res : \n_{r+1}^{\geq i} \Del_S \{ i\} \longrightarrow \nr^{\geq i} \Del_S \{ i\} \\[5pt]
        \F : \n_{r+1}^{\geq i} \Del_S \{ i\} \longrightarrow \nr^{\geq i} \Del_S \{ i\} \\[5pt]
        \V : \nr^{\geq i} \Del_S \{ i\} \longrightarrow \n_{r+1}^{\geq i} \Del_S \{ i\}
    \end{cases}
\end{equation*}
These are obtained by forgetting the leftmost factor, by forgetting the right most factor, and by adding a factor to the left, respectively. In particular, notice that these also pass functorially to the graded pieces, inducing analogous maps, as well.

In analogy with the case $r=1$, there exist two additional maps, with target the non-filtered prismatic cohomology. The first map is the canonical inclusion:
\begin{equation*}
    \begin{tikzcd}[column sep=huge]
        \iota : \nr^{\geq i} \Del_S \{ i\} \arrow[r, hook] & \Del_S \{ i\}
    \end{tikzcd}
\end{equation*}
This is obtained by mapping from the iterated pullback construction, first to the rightmost factor $\n^{\geq i} \Del_S \{ i\}$ via Restriction and then to $\Del_S \{ i\}$ via the canonical inclusion.

The second map is a divided version of the $r$-th iteration of prismatic Frobenius, which we call the \emph{$r$-divided prismatic Frobenius}:
\begin{equation*}
    \begin{tikzcd}[column sep=huge]
        \varphi_{r, i} : \nr^{\geq i} \Del_S \{ i\} \arrow[r] & \Del_S \{ i\}
    \end{tikzcd}
\end{equation*}
This is obtained by mapping from the iterated pullback construction, first to the leftmost factor $\n^{\geq i} \Del_S \{ i\}$ via Frobenius (of this iterated construction) and then to $\Del_S \{ i\}$ via the divided prismatic Frobenius $\varphi_i$.

In order to understand the properties of the $r$-th iterated prismatic Frobenius on absolute prismatic cohomology, of its divided version and how they correspond to the homotopical machinery we used in previous sections, we plan to briefly go through some sheafy constructions on the prismatization stack $\Sigma$.

\begin{defn}[The invertible ideal sheaf $I_r$]
    Remember that, under the equivalence
    \begin{equation*}
        \begin{tikzcd}[column sep=huge]
            \mathcal{D} (\Sigma) \arrow[r, "\simeq"] & \lim_{(A,I)} \widehat{\mathcal{D}} (A)
        \end{tikzcd}
    \end{equation*}
    making constructions on $\mathcal{D} (\Sigma)$ is equivalent to making analogous cnstructions, in a compatible way, over the prism maps. Furthermore, following the discussion surrounding \cite[Prop. 2.4.1]{APC}, we may only perform these constructions on bounded, transversal prisms.

    Let $(A,I)$ be a bounded transversal prism and consider the invertible ideal $I_r \to A$, \cite[Not. 2.2.2]{APC}. It follows that this induces an invertible ideal sheaf $\mathcal{I}_r$ on $\Sigma$.

    Given an animated ring $S$, we denote by $I_r^i \Del_S$ the global sections of the sheaf $\mathcal{I}_r^i \otimes \mathcal{H}_{\Del} (S)$, for $r \geq 0$, $i \in \Z$.
\end{defn}

\begin{con}[Relative $r$-Nygaard filtration]
    Let $S$ be an animated $\overline{A}$-algebra, for a given bounded prism $(A,I)$. Then the relative prismatic cohomology of $S$ is equipped with a relative $r$-Nygaard filtration, which maps to the $I_r$-adic filtration, under the $r$-th iteration of the relative prismatic Frobenius:
    \begin{equation*}
        \varphi_A^r : \nr^{\geq i} (\varphi_A^r)^* \Del_{S/A} \{ i\} \longrightarrow I_r^i \Del_{S/A} \{ i\}
    \end{equation*}

    As this is done in a compatible way, for any bounded prism $(A,I)$, it gives rise to an analogous formula for sheaves on $\Sigma$, for a given animated ring $S$:
    \begin{equation*}
        \nr^{\geq i} (\Phi^r)^* \mathcal{H}_{\Del} (S) \{ i\} \longrightarrow \mathcal{I}_r^i \otimes \mathcal{H}_{\Del} (S) \{ i\}
    \end{equation*}
\end{con}

\begin{con}[$r$-th iteration of the absolute prismatic Frobenius]
    Globalizing the aforementioned formula, as in \cite[Sec. 5.7]{APC}, we obtain a filtered version of the $r$-th iteration of the prismatic Frobenius:
    \begin{equation*}
        \Fil^i (\varphi^r) : \nr^{\geq i} \Del_S \{ i\} \longrightarrow I_r^i \Del_S \{ i\}
    \end{equation*}
    Moreover, as in \cite[Rem 5.7.8]{APC}, one can enhance the $r$-divided prismatic Frobenius to a filtered version:
    \begin{equation*}
        \Fil^{\bullet} (\varphi_{r,i}) : \nr^{\bullet} \Del_S \{ i\} \longrightarrow I_r^{\bullet - i} \Del_S \{ i\}
    \end{equation*}
\end{con}

One is able provide an explicit description of the $r$-Nygaard filtration in the case of quasiregular-semiperfectoid rings:
\begin{prop}[The $r$-Nygaard filtration for quasiregular-semiperfectoid rings] \label{r-Nygaard filtration for quasiregular-semiperfectoid}
    Let $S$ be a quasiregular-semiperfectoid ring. Then its prismatic cohomology $\Del_S$ is a discrete commutative ring and one has the initial prism $(\Del_S, (d))$, \cite[Ex. 4.4.13]{APC}. Then, the computation of the pullback takes place in the discrete world of modules, and therefore one obtains the following identification:
    \begin{equation*}
        \nr^{\geq i} \Del_S \simeq \Big\{ x \in \Del_S \; \big| \; \varphi^r (x) \in d_r^i \Del_S \Big\} \simeq \big(\varphi^{-r} (d_r)\big)^i \Del_S
    \end{equation*}
\end{prop}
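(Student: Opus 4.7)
The plan is to exploit the fact that for $S$ quasiregular-semiperfectoid, $\Del_S$ is a discrete commutative ring concentrated in degree $0$, so that the iterated homotopy pullback defining $\nr^{\geq i} \Del_S$ collapses to a classical pullback in the category of $\Del_S$-modules, which can then be unwound by hand. I would begin by recalling from \cite[Ex.~4.4.13]{APC} (following \cite{BS}) that the prismatic site of such $S$ admits the initial object $(\Del_S, (d))$, with $d$ a distinguished element and $(\Del_S,(d))$ transversal; in particular, each iterated Frobenius $\varphi^k(d)$ is a non-zero-divisor, so the divided Frobenius $\varphi_i(x) = \varphi(x)/d^i$ is unambiguously defined on $\n^{\geq i}\Del_S = \{x \in \Del_S : \varphi(x) \in d^i\Del_S\}$.

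Next I would directly unwind the iterated pullback. An element of $\nr^{\geq i}\Del_S$ is a tuple $(x_1, \ldots, x_r)$ with each $x_k \in \n^{\geq i}\Del_S$ satisfying $\varphi_i(x_k) = x_{k+1}$ for $1 \leq k \leq r-1$. Setting $x := x_1$ and iterating the relation $\varphi(x_k) = d^i x_{k+1}$, induction on $k$ yields $x_{k+1} = \varphi^k(x)/d_k^i$, where $d_k := d\,\varphi(d) \cdots \varphi^{k-1}(d)$; every division is unambiguous since $d_k$ is a non-zero-divisor. The intermediate memberships $x_k \in \n^{\geq i}\Del_S$ are then automatic, and the only non-redundant constraint, coming from $x_r \in \n^{\geq i}$, translates into $\varphi^r(x) \in d_r^i\Del_S$. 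This gives the first identification.

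For the second identification I would interpret $\varphi^{-r}(d_r)$ as the canonical lift $\xi_r$ inherited from a perfectoid base $\ainf(R_0) \to \Del_S$: writing $d_r = \varphi^r(\xi_r)$ in $\ainf(R_0)$, the image of $\xi_r$ in $\Del_S$ represents $\varphi^{-r}(d_r)$. The containment $\xi_r^i\Del_S \subseteq \{x : \varphi^r(x) \in d_r^i\Del_S\}$ is immediate from $\varphi^r(\xi_r^i y) = d_r^i \varphi^r(y)$. The reverse containment would follow by reducing to the perfectoid case, where the identification $\nr^{\geq i}\ainf(R_0) = \xi_r^i \ainf(R_0)$ is already established in the previous section, and then propagating it to $\Del_S$ via $p$-completely faithfully flat base change along $\ainf(R_0) \to \Del_S$, using that the formation of the iterated pullback $\nr^{\geq i}$ commutes with this base change.

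The main obstacle will be the verification of this last commutativity rigorously, which amounts to checking that both the Nygaard filtration $\n^{\geq i}$ and the divided Frobenius $\varphi_i$ behave well under flat base change within the category of QRSPerfd rings. This is the one place where the specific $\delta$-structure coming from the perfectoid base, rather than just the discreteness of $\Del_S$, is essential; the unwinding of the iterated pullback itself is essentially formal once discreteness is invoked.
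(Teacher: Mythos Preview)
The paper offers no argument beyond the sentence already contained in the statement (``the computation of the pullback takes place in the discrete world of modules''), so your write-up is already more detailed than what the paper provides, and your overall strategy---reduce to a classical iterated fibre product by discreteness and then unwind---is exactly the intended one.

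That said, two steps in your argument are too quick. First, the assertion that ``the intermediate memberships $x_k \in \n^{\geq i}\Del_S$ are then automatic'' only gives one inclusion: unwinding shows that the iterated pullback sits inside $\{x : \varphi^r(x) \in d_r^i\Del_S\}$, but the reverse containment requires knowing that $\varphi^r(x) \in d_r^i\Del_S$ forces $\varphi^k(x) \in d_k^i\Del_S$ for each $1 \le k < r$. This is not formal; writing $d_r = d_k \cdot \varphi^k(d_{r-k})$ one sees it amounts to the statement that if $\alpha \in \Del_S[1/d_k]$ has $\varphi^{r-k}(\alpha) \in \Del_S$ then already $\alpha \in \Del_S$. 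For perfectoids this is immediate because $\varphi$ is an automorphism of $\ainf(R_0)$, but for a general quasiregular-semiperfectoid $\Del_S$ the Frobenius is only injective, so an extra argument is needed.

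Second, the base-change step is problematic as written: the structure map $\ainf(R_0) \to \Del_S$ is typically \emph{not} $p$-completely faithfully flat (already in characteristic $p$, where $\Del_S = \acrys(S)$ is far from flat over $W(R_0^{\flat})$), so you cannot transport the identification $\nr^{\geq i}\ainf(R_0) = \xi_r^i\ainf(R_0)$ along it by flatness alone. A cleaner route is to argue directly with the image of $\xi_r$ in $\Del_S$: the inclusion $\xi_r^i\Del_S \subseteq \nr^{\geq i}\Del_S$ is checked by hand (each $\varphi^k(\xi_r)$ is divisible by $d_k$), while the reverse inclusion and the first identification both reduce to the same $\varphi$-integrality statement flagged above. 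Isolating and proving that single lemma for $\Del_S$ is the real content here; once it is in hand, both displayed equivalences follow without any appeal to flatness.
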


By applying quasisyntomic descent and taking sifted colimits from the finitely generated polynomial algebra case \cite[Rem. 5.7.10]{APC}, one arrives at the following generalization of the previous formula:
\begin{prop}[Recurring identification of the $r$-Nygaard filtration] \label{Recurring identification of the $r$-Nygaard filtration}
    Let $S$ be an animated ring. Then, under the action of the $r$-th iterated prismatic Frobenius, one has the following recursive identification of the $r$-Nygaard filtered, complete prismatic cohomology of $S$:
    \begin{equation*}
        (\varphi^r)_*\, \nr^{\geq i} \Delc_S \simeq \varphi_*\, \n^{\geq i} \Delc_S \otimes_{\Delc_S} (\varphi^2)_*\, \n^{\geq i} \Delc_S \otimes_{\Delc_S} \dots \otimes_{\Delc_S} (\varphi^r)_*\, \n^{\geq i} \Delc_S
    \end{equation*}
\end{prop}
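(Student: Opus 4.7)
The plan is to establish the identification by reducing to the quasiregular-semiperfectoid case, in which both sides become principal ideals in a discrete commutative ring and can be compared by an explicit calculation.

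First I would reduce to the qrsp case in two steps. By \cite[Rem. 5.7.10]{APC} and the argument preceding Proposition \ref{r-Nygaard filtration for quasiregular-semiperfectoid}, both sides commute with sifted colimits in the animated ring $S$, so it suffices to establish the equivalence for finitely generated polynomial $\Z$-algebras. Moreover, both sides are built functorially from the Nygaard-filtered prismatic cohomology and the prismatic Frobenius, and therefore define quasisyntomic sheaves; since qrsp rings form a basis for the quasisyntomic topology, it remains to treat the case when $S$ is qrsp.

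In that setting $\Delc_S$ is a discrete commutative ring and $(\Delc_S, (d))$ is the initial prism. Proposition \ref{r-Nygaard filtration for quasiregular-semiperfectoid} gives $\n^{\geq i} \Delc_S \simeq (\varphi^{-1}(d))^i \Delc_S$ and more generally $\nr^{\geq i} \Delc_S \simeq (\varphi^{-r}(d_r))^i \Delc_S$, where $d_r = d \cdot \varphi(d) \cdots \varphi^{r-1}(d)$. Each of these is a cyclic $\Delc_S$-module of rank one, so their Frobenius twists are themselves principal ideals: $(\varphi^k)_* \n^{\geq i} \Delc_S \simeq (\varphi^{k-1}(d))^i \Delc_S$ and $(\varphi^r)_* \nr^{\geq i} \Delc_S \simeq d_r^i \Delc_S$. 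The right-hand side of the proposition then becomes the product of principal ideals $\prod_{k=1}^r (\varphi^{k-1}(d))^i \Delc_S = d_r^i \Delc_S$, matching $(\varphi^r)_*\nr^{\geq i} \Delc_S$. The induction on $r$ implicit in the recursive formulation makes this plain: adding one more factor on the right corresponds to adjoining one more layer in the iterated pullback defining $\nr^{\geq i}$.

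The main obstacle will be not the computation itself but showing that these identifications in the qrsp case assemble into a functorial natural equivalence compatible with the iterated-pullback construction defining $\nr^{\geq i}$, so that it descends through quasisyntomic descent and left Kan extension to an equivalence of filtered objects rather than merely an abstract isomorphism of modules. In practice this amounts to careful bookkeeping of the Breuil--Kisin twists and of the Frobenius-semilinearity of the divided Frobenius maps $\varphi_i$ throughout the reduction.
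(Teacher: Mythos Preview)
Your proposal is correct and follows essentially the same route as the paper: the paper's argument is the one-line reduction ``by applying quasisyntomic descent and taking sifted colimits from the finitely generated polynomial algebra case \cite[Rem.~5.7.10]{APC}'' back to Proposition~\ref{r-Nygaard filtration for quasiregular-semiperfectoid}, and you have unpacked exactly that. Your explicit computation in the qrsp case, identifying both sides with the principal ideal $d_r^i\,\Delc_S$ via the factorization $d_r = \prod_{k=0}^{r-1}\varphi^k(d)$, is the content behind the paper's phrase ``generalization of the previous formula,'' and your final paragraph correctly isolates the one point requiring care (naturality of the identification so that descent applies).
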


The following statements are a consequence of the latter description for the $r$-Nygaard filtered prismatic cohomology:
\begin{lem}
    Let $S$ be an animated ring. Then, its absolute prismatic cohomology $\Del_S$ is complete with respect to the Nygaard filtration $\n^{\geq i} \Del_S$ if and only if it is complete with respect to the $r$-Nygaard filtration $\nr^{\geq i} \Del_S$.
\end{lem}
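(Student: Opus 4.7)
The plan is to compare the two filtrations directly and conclude that they induce the same completion of $\Del_S$. I will handle both directions by exploiting the iterated pullback description together with Proposition~\ref{Recurring identification of the $r$-Nygaard filtration}.

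For the direction that Nygaard-completeness implies $r$-Nygaard-completeness, the first step would be to note that the iterated pullback defining $\nr^{\geq i}\Del_S$,
\[
\nr^{\geq i}\Del_S\{i\} \simeq \n^{\geq i}\Del_S\{i\}\times_{\Del_S\{i\}}\cdots\times_{\Del_S\{i\}}\n^{\geq i}\Del_S\{i\},
\]
is natural in $i$, so the two filtered objects sit inside one another in a controlled way. Since finite limits commute with cofiltered limits, the $r$-Nygaard completion of $\Del_S$ can be expressed as the same iterated pullback formed out of the Nygaard completions of each factor and of $\Del_S$ itself. Assuming Nygaard-completeness, every piece in the pullback is unchanged by the completion operation, and the output of the pullback is canonically identified with $\Del_S$.

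For the converse direction, I would use Proposition~\ref{Recurring identification of the $r$-Nygaard filtration}, which, after pushforward by the $r$-th iterate of Frobenius, expresses $(\varphi^r)_*\nr^{\geq i}\Delc_S$ as an $r$-fold tensor product over $\Delc_S$ of the terms $(\varphi^k)_*\n^{\geq i}\Delc_S$. The projection onto the first factor (equivalently, the projection from the iterated pullback onto its rightmost term, composed with the inclusion into $\Del_S$) provides a natural comparison map between the two filtrations, through which $r$-Nygaard-completeness forces Nygaard-completeness. To make this precise, I would reduce to the quasiregular-semiperfectoid case via quasisyntomic descent, where, by Proposition~\ref{r-Nygaard filtration for quasiregular-semiperfectoid}, the $r$-Nygaard filtration is the principal filtration generated by $\varphi^{-r}(d_r)$, while the Nygaard filtration is generated by $\varphi^{-1}(d)$; the factorization of $d_r$ as a product of Frobenius twists of $d$ then allows one to translate completeness statements between the two.

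The main obstacle is the rigorous handling of the tensor product decomposition and of the Frobenius pushforwards in the recursive formula, since completeness is a derived condition and naive pullbacks of zero objects need not vanish in the stable setting. I expect this can be controlled by working in the appropriate category of filtered modules and using that the relevant Frobenius pullbacks respect the Nygaard-complete structure; equivalently, by descending the entire argument to the quasiregular-semiperfectoid setting, where both filtrations become classical $I$-adic filtrations on a discrete $\delta$-ring, and the equivalence of completeness conditions can be verified by an elementary divisibility argument using the factorization of $d_r$.
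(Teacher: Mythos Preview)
The paper gives no proof of this lemma beyond the sentence ``The following statements are a consequence of the latter description for the $r$-Nygaard filtered prismatic cohomology'', referring to Proposition~\ref{Recurring identification of the $r$-Nygaard filtration}. Your outline is therefore already more detailed than what the paper offers, and the key input you cite---the tensor/recurring identification together with the explicit QRSP description of Proposition~\ref{r-Nygaard filtration for quasiregular-semiperfectoid}---is exactly the input the paper points to.

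One comment on your direction ``Nygaard-complete $\Rightarrow$ $r$-Nygaard-complete'': the first argument you sketch, namely commuting $\lim_i$ with the iterated pullback and then plugging in $\lim_i \n^{\geq i}\Del_S = 0$, does not work on its own. In a stable $\infty$-category the iterated fiber product $0\times_{\Del_S}\cdots\times_{\Del_S}0$ is a shift of $\Del_S$, not zero. You flag exactly this issue in your ``main obstacle'' paragraph and propose the correct remedy: descend to the QRSP case, where both filtrations are honest principal adic filtrations on a discrete ring and the equivalence of completeness is the elementary divisibility statement coming from the factorization of $d_r$ into Frobenius twists of $d$. That reduction is what makes the argument go through, and it is in the same spirit as the paper's appeal to the recurring identification. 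So your proposal is sound once you drop the naive limit-of-pullbacks step and go straight to the QRSP interleaving argument for both directions.
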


Since the functors $S \longmapsto \n^{\geq \bullet} \Delc_S$, $S \longmapsto \n^{\bullet} \Delc_S$ commute with sifted colimits \cite[Rem. 5.5.10, 5.7.10]{APC}, the following result follows:

\begin{prop}
    The functors $S \longmapsto \nr^{\geq \bullet} \Delc_S$, $S \longmapsto \nr^{\bullet} \Delc_S$ commute with sifted colimits.
\end{prop}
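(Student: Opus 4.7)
My plan is to reduce the filtered statement to the $r=1$ case (cited from \cite[Rem. 5.5.10, 5.7.10]{APC}) using the iterated pullback description of the $r$-Nygaard filtration, and then to obtain the graded statement for free by taking cofibres.

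First I would handle $S \mapsto \nr^{\geq i} \Delc_S$. By definition this is the iterated finite pullback
\[
\nr^{\geq i} \Delc_S \{i\} \simeq \n^{\geq i} \Delc_S \{i\} \times_{\Delc_S\{i\}} \cdots \times_{\Delc_S\{i\}} \n^{\geq i} \Delc_S \{i\}
\]
in the derived $\infty$-category of (Nygaard-complete, $p$-complete) filtered complexes, with structure maps alternating between the canonical inclusion $\iota$ and the divided Frobenius $\varphi_i$, both of which are natural in $S$. Since this ambient $\infty$-category is stable, every finite limit is also a finite colimit and therefore commutes with sifted colimits. Combined with the fact that $S \mapsto \n^{\geq i} \Delc_S$ and $S \mapsto \Delc_S$ each commute with sifted colimits, this yields the claim for $\nr^{\geq i} \Delc_S$.

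For the graded functor $S \mapsto \nr^i \Delc_S$, I would use that
\[
\nr^i \Delc_S \simeq \cofib\bigl( \nr^{\geq i+1} \Delc_S \longrightarrow \nr^{\geq i} \Delc_S \bigr),
\]
and since cofibres are finite colimits they commute with sifted colimits, so the filtered result from the previous step propagates to the graded pieces.

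The only real subtlety, which I would need to pin down carefully, is the naturality of the alternating $\iota$ and $\varphi_i$ maps in $S$, so as to genuinely view the iterated pullback as a functor of $S$ before invoking the stable $\infty$-categorical formalism. An alternative route which avoids this bookkeeping is to start from the tensor-product formula of Proposition \ref{Recurring identification of the $r$-Nygaard filtration}: each Frobenius-twisted factor $(\varphi^k)_*\, \n^{\geq i} \Delc_S$ commutes with sifted colimits in $S$ (Frobenius twist only reinterprets the $\Delc_S$-module structure), and derived relative tensor products commute with colimits in each variable and with base change along the functor $S \mapsto \Delc_S$, which itself commutes with sifted colimits. Either path should reach the same conclusion without any input beyond the $r=1$ case.
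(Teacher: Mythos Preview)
Your proposal is correct, and your alternative route via the tensor-product formula of Proposition~\ref{Recurring identification of the $r$-Nygaard filtration} is exactly the paper's implicit argument: the proposition is stated immediately after that formula, with the one-line justification that it ``follows'' from the $r=1$ case cited from \cite[Rem.~5.5.10, 5.7.10]{APC}. Your primary route through the iterated pullback description is an equally valid and arguably cleaner alternative, since in a stable $\infty$-category a finite limit is (a shift of) a finite colimit and hence commutes with all colimits; this bypasses the Frobenius-twist and base-change bookkeeping entirely, at the mild cost of the naturality check you already flagged.
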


The above identification for $\nr^{\geq \bullet} \Del_S$, paired with the Propositions \cite[Prop 5.5.19, Rem. 5.7.10]{APC}, as well as the properties of the Beilinson t-structure/d\'ecalage functor outlined in \cite[Lem. 6.11]{BMS1}, \cite[Cor. 7.10, Rem. 7.11]{BMS2}, yield the following:
\begin{prop}[Connectivity estimates] \label{r-Nygaard Beilinson t-structure}
    Let $S$ be an animated ring. Then, the filtered complex $\nr^{\geq i} \Del_S$ is connective with respect to the Beilinson $t$-structure or, equivalently, the cohomology groups of the complex $\nr^i \Del_S$ are concentrated in degrees $\leq i$.

    If we allow $S$ to be a, finitely generated polynomial algebra over $\Z$, then the filtered complex $\nr^{\geq \bullet} \Delc_S$ gets identified under $\Fil^i (\varphi^r)$ with the connective cover of $I_r^{\bullet} \Del_S$, with respect to the Beilinson t-structure.
\end{prop}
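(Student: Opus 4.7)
The plan is to reduce both statements to the corresponding facts for the ordinary Nygaard filtration, by exploiting the recursive tensor product decomposition of Proposition~\ref{Recurring identification of the $r$-Nygaard filtration}. The essential inputs are the Beilinson connectivity estimate and the connective-cover description for $\n^{\geq \bullet} \Delc_S$ from \cite[Prop.~5.5.19, Rem.~5.7.10]{APC}, together with the formalism of the Beilinson $t$-structure and d\'ecalage recalled in \cite[Lem.~6.11]{BMS1} and \cite[Cor.~7.10, Rem.~7.11]{BMS2}.

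For the first claim, I would first record the general principle that derived tensor products of filtered complexes preserve Beilinson connectivity: if $\gr^p F$ and $\gr^q G$ are concentrated in cohomological degrees $\leq p$ and $\leq q$ respectively, then in the identity $\gr^n(F \otimes G) \simeq \bigoplus_{p+q=n} \gr^p F \otimes^{\mathbb{L}} \gr^q G$ each summand lies in degrees $\leq p+q = n$, since the derived tensor product does not exceed the sum of top cohomological amplitudes. A Frobenius pushforward leaves the cohomological support of graded pieces unaffected, so each factor $(\varphi^k)_* \n^{\geq \bullet} \Delc_S$ appearing in Proposition~\ref{Recurring identification of the $r$-Nygaard filtration} is Beilinson connective by \cite[Prop.~5.5.19]{APC}, and hence so is $(\varphi^r)_* \nr^{\geq \bullet} \Delc_S$. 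For the uncompleted case and for general animated $S$, one descends via sifted colimits, using that the functor $S \mapsto \nr^{\geq \bullet} \Del_S$ commutes with sifted colimits (preceding proposition) and that Beilinson connectivity is stable under such colimits.

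For the second claim, I specialize to $S = \Z[x_1, \dots, x_n]$. By \cite[Rem.~5.7.10]{APC}, $\varphi_* \n^{\geq \bullet} \Delc_S$ is the Beilinson connective cover of the $I^\bullet$-adic filtration on $\varphi_* \Del_S$ under $\Fil^\bullet(\varphi)$, and iterating along the Frobenius yields the analogous identification of each $(\varphi^k)_* \n^{\geq \bullet} \Delc_S$ as the Beilinson connective cover of $\varphi^{k-1}(I)^\bullet (\varphi^k)_* \Del_S$. Because the prismatic cohomology of a finitely generated polynomial algebra is concentrated in cohomological degree $0$ with flat graded pieces for the filtrations under consideration, the Beilinson connective cover commutes with the derived tensor product over $\Delc_S$; tensoring the $\varphi^{k-1}(I)^\bullet$-adic filtrations for $k = 1, \dots, r$ recovers exactly the $I_r^\bullet$-adic filtration on $(\varphi^r)_* \Del_S$, using $I_r = \prod_{k=0}^{r-1} \varphi^k(I)$. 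Combined with Proposition~\ref{Recurring identification of the $r$-Nygaard filtration}, this realises $(\varphi^r)_* \nr^{\geq \bullet} \Delc_S$ as the Beilinson connective cover of $I_r^\bullet \Delc_S$, as desired.

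The main technical obstacle is verifying the commutation of the Beilinson connective cover with the derived tensor product, which fails in generic $t$-structure situations. In our setting it holds because the filtered graded pieces are flat—discretely so in the polynomial case, as $\Delc_S$ is concentrated in degree $0$—so that the required K\"unneth-type statement reduces to a claim about ordinary Postnikov truncations that follows from the d\'ecalage compatibilities of \cite[Lem.~6.11]{BMS1} and \cite[Cor.~7.10]{BMS2}. Once this compatibility is in place, the recursive decomposition mechanises the identification in the polynomial case, and the general Beilinson connectivity statement then follows by sifted colimits.
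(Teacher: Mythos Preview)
Your proposal is correct and follows precisely the approach the paper indicates: the paper's entire ``proof'' is the single sentence preceding the proposition, which points to Proposition~\ref{Recurring identification of the $r$-Nygaard filtration} together with \cite[Prop.~5.5.19, Rem.~5.7.10]{APC} and the d\'ecalage/Beilinson compatibilities of \cite[Lem.~6.11]{BMS1}, \cite[Cor.~7.10, Rem.~7.11]{BMS2}, and you have unpacked exactly how these ingredients combine. One small remark: for the passage from $\Delc_S$ to the uncompleted $\Del_S$ in the first claim, rather than invoking sifted colimits (the preceding proposition is stated only for $\Delc_S$), it is cleaner to observe that Beilinson connectivity is a statement about associated graded pieces, and these agree for $\Del_S$ and $\Delc_S$ by the lemma immediately after Proposition~\ref{Recurring identification of the $r$-Nygaard filtration}.
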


Let us also introduce the $r$-Hodge--Tate cohomology, using some prismatization related constructions.
\begin{defn}[The $r$-Hodge--Tate divisor]
    We denote by $\Sigma^{\HT, r}$ the \emph{$r$-Hodge--Tate divisor}, which is defined to be the closed substack of $\Sigma$, associated with the invertible ideal sheaf $\mathcal{I}_r$. For $r=1$, this naturally identifies with the usual Hodge--Tate divisor $\Sigma^{\HT}$.
\end{defn}

\begin{cor}[Prism maps to the $r$-Hodge--Tate divisor]
    Let $(A,I)$ be a prism. Then there exists a map from the formal scheme associated with $A/I_r$ to the $r$-Hodge--Tate divisor, making the following square Cartesian:
    \begin{equation*}
        \begin{tikzcd}[column sep=huge, row sep=huge]
            \spf A/ I_r \arrow[r, "\rho_A^{\HT, r}"] \arrow[d, hook] & \Sigma^{\HT, r} \arrow[d, hook] \\
            \spf A \arrow[r, "\rho_A"] & \Sigma
        \end{tikzcd}
    \end{equation*}
\end{cor}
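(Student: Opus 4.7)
The plan is to mirror the proof of the analogous Cartesian square for the usual Hodge--Tate divisor (the case $r=1$ recorded in the text), using the functorial description of closed substacks associated with invertible ideal sheaves. The statement that a commutative square is Cartesian reduces, via the equivalence $\mathcal{D}(\Sigma) \simeq \lim_{(A,I)} \widehat{\mathcal{D}}(A)$ and the fact that $\Sigma^{\HT,r} \hookrightarrow \Sigma$ is the closed substack cut out by $\mathcal{I}_r$, to the identification of the pullback $\rho_A^{\ast}\,\mathcal{I}_r$ with the invertible ideal $I_r \subseteq A$.

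First, I would recall the definition: $\mathcal{I}_r$ was built from the system $\{I_r \subseteq A\}_{(A,I)}$ over bounded, transversal prisms, and this determines $\mathcal{I}_r \in \mathcal{D}(\Sigma)$ by the limit description; the closed substack $\Sigma^{\HT,r}$ is then the derived zero locus of $\mathcal{I}_r \to \mathcal{O}_\Sigma$. Consequently, by the very definition of closed substacks of this form, the fibre product $\Sigma^{\HT,r} \times_\Sigma \spf A$ is the closed formal subscheme of $\spf A$ cut out by $\rho_A^{\ast}\,\mathcal{I}_r$. Since pullback along $\rho_A$ simply returns the invertible ideal $I_r \hookrightarrow A$ on bounded transversal prisms, and since general bounded prisms admit transversal envelopes under which everything is compatible, one concludes that $\Sigma^{\HT,r} \times_\Sigma \spf A \simeq \spf A/I_r$. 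This equivalence supplies the desired map $\rho_A^{\HT,r} \colon \spf A/I_r \to \Sigma^{\HT,r}$ and simultaneously exhibits the square as Cartesian.

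The potential obstacle is the bookkeeping when $(A,I)$ is not transversal: one must check that the identification $\rho_A^{\ast}\,\mathcal{I}_r \simeq I_r$ propagates, but this follows because the assignment $(A,I) \mapsto I_r$ is functorial in prism maps and $\mathcal{I}_r$ was defined precisely so that its restriction along any $\rho_A$ recovers $I_r \subseteq A$; alternatively, one reduces to the transversal case, where $A$ is $I_r$-torsion free and hence $I_r$ is an honest invertible ideal. With this in hand, no further computation is needed, since the Cartesian square for closed immersions associated to invertible ideal sheaves is formal.
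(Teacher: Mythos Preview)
Your argument is correct and is precisely the one implicit in the paper. In fact the paper states this result as a corollary of the definition of $\Sigma^{\HT,r}$ (as the closed substack cut out by $\mathcal{I}_r$) and gives no separate proof; your write-up simply spells out the formal step---that pulling back the defining ideal sheaf $\mathcal{I}_r$ along $\rho_A$ recovers $I_r \subset A$, whence the fibre product is $\spf A/I_r$---which the paper leaves to the reader by analogy with the $r=1$ case recalled in the preliminaries.
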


Note that a natural corollary of the above, as in the case of the prismatization stack $\Sigma$, is that one is able to relate sheaves on $\Sigma^{\HT,r}$ in terms of sheaves on $\spf A/I_r$. In particular, the following equivalence holds, for all $1 \leq r < \infty$:
\begin{equation*}
        \begin{tikzcd}
            \mathcal{D} (\Sigma^{\HT, r}) \arrow[r, "\simeq"] & \lim_{(A,I)} \widehat{\mathcal{D}} (A/I_r)
        \end{tikzcd}
    \end{equation*}

In general, the prism maps $\rho^{\HT,r}$, the geometry of the $r$-Hodge--Tate divisor and how it relates to the prismatization stack $\Sigma$ is better understood using the related stacks which classify degree $r$ distinguished elements, $\Sigma_r := [ \W_{\mathrm{prim}=r}/ \W^{\times}]$, which we hope to illuminate in future work. The last section contains a related discussion regarding this.

\begin{defn}[The $r$-Hodge--Tate cohomology]
    Let $S$ be an animated ring. In analogy with the usual Hodge--Tate cohomology, one now can use the canonical map $\iota: \Sigma^{\HT,r} \hookrightarrow \Sigma$, to define the $r$-Hodge--Tate cohomology of $S$, as:
    \begin{equation*}
        \mathcal{H}_{\Del^{\HT,r}} (S) \simeq 
        \mathcal{H}_{\Del} (S) \big|_{\Sigma^{\HT}}
    \end{equation*}
\end{defn}

Passing to global sections we have that the $r$-Hodge--Tate cohomology fits in the lower right corner of the commutative square. On the upper row we have the $r$-divided prismatic Frobenius from the $i$-th filtered piece of the $r$-Nygaard filtration, while on the lower row, we have the induced map from the $i$-th associated graded piece:
\begin{equation*}
    \begin{tikzcd}[column sep=huge]
        \nr^{\geq i} \Del_S \{ i\} \arrow[r, "\varphi_{r, i}"] \arrow[d] & \Del_S \{ i\} \arrow[d] \\
        \nr^i \Del_S \{ i\} \arrow[r] & \Del_S^{\HT, r} \{ i\}
    \end{tikzcd}
\end{equation*}

Note that under the viewpoint of \cite[Sec. 13]{BS}, \cite[Sec. 3]{riggenbach2022k} and via descent \cite[Theorem 5.6.2]{APC}, it follows that for $p$-complete animated rings, the algebraic way (via $\Sigma^{\HT, r}$ and the homotopy theoretic way (via $\THH (-;\Zp)^{tC_{p^r}}$) to introduce $\Del^{\HT, r}$ coincide.

The homotopy-theoretic machinery gives us a connection with the Witt vector functor:
\begin{equation*}
    \nr^0 \Del_S \simeq \W_r (S)
\end{equation*}

For $r=\infty$, we have to take the homotopy limit with respect to the Restriction maps, in order to obtain the $\infty$-Nygaard filtration:
\begin{gather*}
    \ninf^{\geq i} \Del_S \{ i\} \simeq \limr \nr^{\geq i} \Del_S \{ i\} \simeq\\
    \dots \times_{\Del_S \{ i\}} \n^{\geq i} \Del_S \{ i\} \times_{\Del_S \{ i\}} \dots \times_{\Del_S \{ i\}} \n^{\geq i} \Del_S \{ i\}
\end{gather*}

In this case we do not have a divided version of the Frobenius nor a square analogous to the one involving the $r$-Hodge--Tate cohomology. However, there exists a Frobenius endomorphism, preserving the $\infty$-Nygaard filtration:
\begin{equation*}
    \begin{tikzcd}[column sep=huge]
        \F : \ninf^{\geq i} \Del_S \{ i\} \arrow[r] & \ninf^{\geq i} \Del_S \{ i\}
    \end{tikzcd}
\end{equation*}
This passes to the associated graded pieces and, thus, it is also respected by the equivalence with the Witt vectors:
\begin{equation*}
    \ninf^0 \Del_S \simeq \W (S)
\end{equation*}

\subsection{The relative case}

There is an analogous picture regarding the relative theory. Let $(A,I)$ be a bounded prism and consider $S$ to be a $p$-complete animated $\overline{A}$-algebra. There exists a filtered relative prismatic Frobenius mapping the $r$-Nygaard filtration to the $I_r$-adic filtration:
\begin{equation*}
    \Fil^i (\varphi_A^r) : \nr^{\geq i} (\varphi^r) \Del_{S/A} \{ i\} \longrightarrow I_r^i \Del_{S/A} \{ i\}
\end{equation*}
Its divided version gives rise to the following commutative square:
\begin{equation*}
    \begin{tikzcd}[column sep=huge]
        \nr^{\geq i} (\varphi_A^r)^* \Del_{S/A} \{ i\} \arrow[r, "\varphi_{r, i}"] \arrow[d] & \Del_{S/A} \{ i\} \arrow[d] \\
        \nr^i (\varphi_A^r)^* \Del_{S/A} \{ i\} \arrow[r] & \Del_{S/A}^{\HT, r} \{ i\}
    \end{tikzcd}
\end{equation*}

Let us go back to the discussion in the introduction regarding possible connections between the $r$-Nygaard filtration and the de Rham--Witt complex. We would like to view the map of the lower row as factoring through the $i$-th filtered piece of a possible conjugate filtration on the $r$-Hodge--Tate cohomology $\Del_{S/A}^{\HT, r} \{ i\}$, whose associated graded term gives rise to a version of the $i$-th de Rham--Witt forms. Such a viewpoint can be made possible, at least over a perfect prism. Working over a general prism is more subtle, as then one would have to make guesses on how to vary this construction, so that an absolute version is possible.

Working towards a possible de Rham--Witt comparison requires building a complex for each stage $r \geq 1$, equipped with a suitable conjugate filtration whose associated graded terms gives rise to such a comparison. In the setting of $A\Omega$-cohomology, as developed in \cite{BMS1}, such a construction is possible. In particular, the authors build an $\F$-$\V$-procomplex, using the d\'ecalage functor $L\eta_{\widetilde{\xi}_r}$, for the element $\xi_r \in \ainf$, which gives rise to a comparison with the $p$-complete, relative de Rham--Witt forms of Langer--Zink \cite{LangerZink}. This argument has been extended in \cite{molokov2020prismatic} to extend such a comparison in the case of relative prismatic cohomology, over a perfect prism. By adapting this idea, we are able to produce a conjugate filtration on the $r$-Hodge--Tate cohomology, enhancing the $r$-divided Frobenius diagram, above.

\begin{con}[Building a complex for each $1 \leq r < \infty$] \label{Building a complex}
    Let us restrict to the case that $S$ is a $p$-completely smooth $\overline{A}$-algebra, for a given bounded prism $(A,I)$. After explaining the constructions, it is possible to extend to the case that $S$ is a $p$-complete $\overline{A}$-algebra, by left Kan extending. Following the ideas of \cite[Paragraphs 3 and 4]{APC}, it is possible to reduce even further to the case that $(A,I)$ is a transversal prism.
    
    The discussion in \cite[Con. 5.2.1]{APC} produces a complex, with differential coming from the Bockstein operator, out of the graded pieces for the Nygaard filtration (and the $I$-adic filtration) on $\Del_{S/A}$. This is essentially the outcome of the formalism of the Beilinson t-structure on filtered derived categories and the identification of its heart with the abelian category of honest cochain complexes. This produces the relative de Rham complex and thus gives rise to the Hodge--Tate and de Rham comparison theorems. The specialization morphism maps the Nygaard filtered relative prismatic cohomology to the Hodge filtered relative de Rham cohomology.

    One is able to verbatim reproduce these ideas, by replacing $I^{\bullet} \Del_{S/A}$ (resp. $\n^{\geq \bullet} \Del_{S/A} \{ \bullet\}$), with $I_r^{\bullet} \Del_{S/A}$ (resp. $\nr^{\geq \bullet} \Del_{S/A} \{ \bullet \}$). This is made possible via the formalism of the Beilinson t-structure in the relative setting, by adapting Propositions \ref{Recurring identification of the $r$-Nygaard filtration} and \ref{r-Nygaard Beilinson t-structure}, together with \cite[5.2.14]{APC}:
    \begin{cor} \label{r-Nygaard filtration via decalage}
        Let $S$ be a $p$-complete animated $\overline{A}$-algebra, for a given bounded prism $(A,I)$. The filtered complex $\nr^{\geq i} \Del_{S/A}$ is connective with respect to the Beilinson t-structure or, equivalently, the cohomology groups of $\nr^i \Del_{S/A}$ are concentrated in degrees $\leq i$.

        Restricting to the case that $S$ is a smooth $\overline{A}$-algebra, we have that the $r$-th iteration of the relative prismatic Frobenius factors isomorphically through the d\'ecalage with respect to $I_r$:
        \begin{equation*}
            \begin{tikzcd}
                (\varphi_A^r)^* \Del_{S/A} \{ i\} \arrow[r, "\simeq"] & L \eta_{I_r} \Del_{S/A} \{ i\} \arrow[r] & \Del_{S/A}
            \end{tikzcd}
        \end{equation*}
        Moreover, the $r$-Nygaard filtered relative prismatic cohomology $\nr^{\geq \bullet} (\varphi_A^r)^* \Del_{S/A} \{ \bullet\}$ can be identified with the connective cover, with respect to the Beilinson t-structure, of the $I_r$-adic filtration $I_r^{\bullet} \Del_{S/A} \{ \bullet\}$, under the action of the $r$-iterated relative prismatic Frobenius.
    \end{cor}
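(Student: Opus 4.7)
The plan is to prove the three assertions by bootstrapping from the $r = 1$ case, which is established in \cite{APC}, and exploiting the iterative/multiplicative structure of the $r$-Nygaard filtration recorded in Proposition \ref{Recurring identification of the $r$-Nygaard filtration}.

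First, for the Beilinson-connectivity claim, I would use the recursive identification
\begin{equation*}
    (\varphi^r)_*\, \nr^{\geq i} \Delc_{S/A} \simeq \varphi_*\, \n^{\geq i} \Delc_{S/A} \otimes_{\Delc_{S/A}} \dotsb \otimes_{\Delc_{S/A}} (\varphi^r)_*\, \n^{\geq i} \Delc_{S/A},
\end{equation*}
together with Proposition \ref{r-Nygaard Beilinson t-structure}, the relative analogue of which gives Beilinson-connectivity of each factor in the tensor product. Since the Beilinson t-structure on filtered complexes is compatible with the derived tensor product of connective filtered objects, the tensor product remains Beilinson-connective. Alternatively, one can argue directly from the iterated pullback definition: both the inclusion $\iota \colon \n^{\geq i} \Del_{S/A}\{i\} \hookrightarrow \Del_{S/A}\{i\}$ and the divided Frobenius $\varphi_i$ are morphisms in the heart-connective part of the Beilinson t-structure, and finite limits preserve connectivity.

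Second, for the d\'ecalage identification in the smooth case, I would induct on $r$. The base case $r=1$ is precisely the content of the prismatic description of the divided Frobenius, namely $\varphi_A^*\Del_{S/A} \simeq L\eta_I\Del_{S/A}$, which is recorded in \cite{APC} (and originates in \cite{BMS1} for the perfect case). For the inductive step, one writes $(\varphi_A^r)^*\Del_{S/A} \simeq \varphi_A^*(\varphi_A^{r-1})^*\Del_{S/A}$ and uses the compatibility of $L\eta$ with Frobenius pullback, i.e. $\varphi_A^* L\eta_f M \simeq L\eta_{\varphi_A(f)} \varphi_A^* M$ for $f$ a non-zero-divisor generating an invertible ideal. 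Combined with the multiplicativity $L\eta_{f} \circ L\eta_{g} \simeq L\eta_{fg}$ (valid on bounded complexes with appropriate flatness hypotheses supplied by $p$-complete smoothness) and the factorization $I_r = I \cdot \varphi(I) \cdot \dotsb \cdot \varphi^{r-1}(I)$, the induction closes. Passage from smooth to $p$-complete animated is handled by the sifted-colimit preservation recorded above.

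Third, with the d\'ecalage identification in hand, the connective-cover description follows from the general fact, recalled in \cite[Lem.\ 6.11]{BMS1} and \cite[Cor.\ 7.10]{BMS2}, that $L\eta_f$ of an $f$-adically filtered complex computes its connective cover with respect to the Beilinson t-structure; applying this to $f = I_r$ and combining with step two identifies $\nr^{\geq \bullet} (\varphi_A^r)^*\Del_{S/A}\{\bullet\}$ with the Beilinson-connective cover of $I_r^{\bullet}\Del_{S/A}\{\bullet\}$ under the $r$-fold Frobenius.

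The main technical obstacle will be the second step: verifying the compatibility $\varphi_A^* L\eta_I \simeq L\eta_{\varphi_A(I)} \varphi_A^*$ at the level of \emph{filtered} objects, not merely on underlying complexes, so that the inductive argument actually delivers the $r$-Nygaard filtered structure rather than only recovering the underlying prismatic cohomology. Tracking the Breuil--Kisin twists $\{i\}$ through the iteration, and ensuring that the multiplicativity $L\eta_I \circ L\eta_{\varphi(I)} \simeq L\eta_{I \cdot \varphi(I)}$ holds for the relevant derived categories of $I_r$-torsionfree complexes, is where the real care is required; once this is in place, all three claims reduce to the already-known $r=1$ input plus bookkeeping.
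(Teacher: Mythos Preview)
Your proposal is correct and follows essentially the same approach as the paper. The paper presents this corollary inside a construction without a dedicated proof, simply stating that it follows by adapting the recursive identification (Proposition \ref{Recurring identification of the $r$-Nygaard filtration}) and the connectivity estimates (Proposition \ref{r-Nygaard Beilinson t-structure}) to the relative setting, together with \cite[5.2.14]{APC} and the d\'ecalage/Beilinson t-structure properties from \cite[Lem.\ 6.11]{BMS1} and \cite[Cor.\ 7.10, Rem.\ 7.11]{BMS2}; your induction on $r$ using the multiplicativity $L\eta_f \circ L\eta_g \simeq L\eta_{fg}$ is precisely the mechanism hidden behind the paper's citation of \cite[Lem.\ 6.11]{BMS1}, so you have just made explicit what the paper leaves as a sketch.
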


    The presence of the d\'ecalage filtration $L \eta_{I_r} \Del_{S/A} \{ i\}$ allows us to build a complex, for each $r \geq 1$, with differential coming from Bockstein. Putting these together, we have a pro-system of complexes equipped with obvious symmetries of Frobenius $\F$ and Verschiebung $\V$. The procedure is exactly the one that is spelled out in \cite[Sec. 11]{BMS1}, with the object $L \eta_{\widetilde{\xi}_r} (-)$ taking the place of $L \eta_{I_r} (-)$.
\end{con}

The following enhancement of the $r$-divided Frobenius diagram is a direct corollary of the construction:
\begin{cor}[Conjugate filtration]
    Let $S$ be a $p$-complete animated $\overline{A}$, for a given bounded prism $(A,I)$. The $r$-Hodge--Tate cohomology is equipped with an increasing, multiplicative, exhaustive conjugate filtration $\Fil_{\bullet}^{\conj} \Del_S^{\HT,r}$, such that the lower row of the $r$-divided relative prismatic Frobenius commutative square factors through its $i$-th filtered piece:
    \begin{equation*}
        \begin{tikzcd}[column sep=huge]
            \nr^{\geq i} (\varphi^r)^* \Del_{S/A} \{ i\} \arrow[d] \arrow[rr, "\varphi_{r,i}"] & & \Del_{S/A} \{ i\} \arrow[d] \\
            \nr^i (\varphi^r)^* \Del_{S/A} \arrow[rr] \arrow[dr, bend right=10, "\simeq" description] & & \Del_{S/A}^{\HT,r} \{ i\} \\[-10pt]
            & \Fil_i^{\conj} \Del_{S/A}^{\HT, r} \{ i\} \arrow[ur, bend right=10, hook]
        \end{tikzcd}
    \end{equation*}
    In particular, the following isomorphism holds:
    \begin{equation*}
        \begin{tikzcd}[column sep=huge]
            \nr^i (\varphi^r)^* \Del_{S/A} \{ i\} \arrow[r, "\simeq"] & \Fil_i^{\conj} \Del_{S/A}^{\HT, r} \{ i\}
        \end{tikzcd}
    \end{equation*}
    Furthermore, following \cite{APC},if $S$ is a $p$-complete, finitely generated polynomial $\overline{A}$-algebra (or a $p$-completely smooth $\overline{A}$-algebra), the conjugate filtration can be identified with the filtration induced from the Postnikov tower:
    \begin{equation*}
        \Fil_i^{\conj} \Del_{S/A}^{\HT, r} \{ i\} \simeq \tau^{\leq i} \Del_{S/A}^{\HT,r} \{ i\}
    \end{equation*}
\end{cor}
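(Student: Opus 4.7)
The plan is to derive this statement as a direct consequence of Construction 5.19 (\emph{Building a complex for each} $1 \leq r < \infty$) together with the décalage identification in Corollary \ref{r-Nygaard filtration via decalage}. First I would reduce to the case in which $S$ is $p$-completely smooth over $\overline{A}$: the general animated case follows by left Kan extension from finitely generated polynomial $\overline{A}$-algebras, and both the $r$-Nygaard graded pieces $\nr^i (\varphi^r)^* \Del_{S/A} \{i\}$ and the target $\Del_{S/A}^{\HT,r} \{i\}$ commute with sifted colimits in $S$. In the smooth case, Corollary \ref{r-Nygaard filtration via decalage} gives an isomorphism $(\varphi_A^r)^* \Del_{S/A} \simeq L\eta_{I_r}\Del_{S/A}$ under which the $r$-Nygaard filtration becomes the connective cover of the $I_r$-adic filtration with respect to the Beilinson $t$-structure.

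Next I would \emph{define} the conjugate filtration $\Fil_\bullet^{\conj} \Del_{S/A}^{\HT,r}$ as the filtration on $\Del_{S/A}^{\HT,r} \simeq \Del_{S/A}/I_r$ induced by reducing the décalage filtration $L\eta_{I_r} \Del_{S/A}$ modulo $I_r$; equivalently, this is the image filtration obtained from the $r$-Nygaard filtration under the composite $\nr^{\geq \bullet}(\varphi^r)^*\Del_{S/A}\{\bullet\} \to (\varphi^r)^*\Del_{S/A}\{\bullet\} \to \Del_{S/A}^{\HT,r}\{\bullet\}$. Multiplicativity and exhaustiveness then follow from the corresponding properties of the décalage filtration, exactly as in \cite[Sec.~5]{BMS1} with the distinguished element $\widetilde{\xi}_r$ replaced by the ideal $I_r$; increasingness is automatic from the construction. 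The isomorphism $\nr^i (\varphi^r)^* \Del_{S/A} \{i\} \simeq \Fil_i^{\conj} \Del_{S/A}^{\HT,r} \{i\}$ is then a formal consequence of the fact that, under the Beilinson $t$-structure formalism, the $i$-th graded piece of the connective cover of $I_r^\bullet\Del_{S/A}$ identifies with the image of the $i$-th step of the décalage modulo $I_r$, i.e.\ with $\Fil_i^{\conj}\Del_{S/A}^{\HT,r}\{i\}$.

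For the final assertion, the key input is the connectivity estimate of Proposition \ref{r-Nygaard Beilinson t-structure}, which tells us that $\nr^i (\varphi^r)^* \Del_{S/A}\{i\}$ is concentrated in cohomological degrees $\leq i$. Hence the natural inclusion $\Fil_i^{\conj} \Del_{S/A}^{\HT,r}\{i\} \hookrightarrow \Del_{S/A}^{\HT,r}\{i\}$ has source in degrees $\leq i$, so it factors through the Postnikov truncation $\tau^{\leq i} \Del_{S/A}^{\HT,r}\{i\}$; in the smooth/polynomial case, a direct comparison on associated graded pieces via the Hodge--Tate comparison theorem shows that this factorization is an equivalence, giving $\Fil_i^{\conj}\Del_{S/A}^{\HT,r}\{i\}\simeq\tau^{\leq i}\Del_{S/A}^{\HT,r}\{i\}$.

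The main obstacle I anticipate is checking that the two a priori different descriptions of the conjugate filtration — the one constructed here via the décalage $L\eta_{I_r}$ on relative prismatic cohomology, and the one implicit in Construction \ref{Building a complex} via the pro-system of Bockstein complexes adapted from \cite[Sec.~11]{BMS1} — indeed coincide. This reduces to verifying the compatibility of the Beilinson $t$-structure with the $r$-Frobenius pullback and with reduction mod $I_r$, together with a careful bookkeeping of Breuil--Kisin twists; in the case of a perfect prism this is essentially the content of \cite{molokov2020prismatic}, and for a general bounded prism it follows by descent and left Kan extension from this setting.
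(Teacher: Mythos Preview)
Your approach is essentially the same as the paper's, which treats this corollary as an immediate consequence of Construction \ref{Building a complex} and Corollary \ref{r-Nygaard filtration via decalage}: the conjugate filtration is defined via the d\'ecalage $L\eta_{I_r}$ reduced mod $I_r$, the isomorphism $\nr^i \simeq \Fil_i^{\conj}$ is the standard Beilinson $t$-structure statement, and the Postnikov identification in the smooth case comes from the connectivity estimates. Your elaboration is more detailed than what the paper actually writes (it gives no separate proof), but the underlying strategy is identical.

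One minor point: your final paragraph about reconciling ``two a priori different descriptions'' of the conjugate filtration is unnecessary here. In the paper's setup there is only one construction---the d\'ecalage/Bockstein formalism of Construction \ref{Building a complex} applied to $I_r^\bullet \Del_{S/A}$---so there is nothing to compare. The reference to \cite{molokov2020prismatic} in the paper concerns the subsequent de Rham--Witt comparison over perfect prisms, not the existence or well-definedness of the conjugate filtration itself, which holds over any bounded prism directly from the d\'ecalage machinery.
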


For $r=1$, the Hodge--Tate comparison \cite{BS, APC} ensures that the associated graded terms of the conjugate filtration can be identified with the $i$-th de Rham forms:
\begin{equation*}
    \gr_i^{\conj} \overline{\Del}_{S/A} \{ i\} \simeq \gr_{\hod}^i \dr_{S/ \overline{A}} \simeq L\widehat{\Omega}_{S/ \overline{A}}^i [-i]
\end{equation*}

As we already mentioned, in the case one works over a perfect prism, Construction \ref{Building a complex} gives rise to an $\F$-$\V$-procomplex, which in turn produces gives a desired comparison with de Rham--Witt forms:
\begin{thm}[de Rham--Witt comparison, \cite{BMS1,molokov2020prismatic}]
    Let $S$ be a $p$-completely smooth $\overline{A}$-algebra, for a given perfect prism $(A,I)$. Then the $i$-th associated graded piece of the conjugate filtration can be identified with the continuous version of the relative de Rham--Witt complex of Langer--Zink:
    \begin{equation*}
        \gr_i^{\conj} \overline{\Del}_{S/A} \{ i\} \simeq W_r\Omega_{S/\overline{A}}^{i, \mathrm{cont}}
    \end{equation*}
\end{thm}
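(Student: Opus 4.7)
The plan is to exhibit the pro-system $\{\gr_i^{\conj} \Del_{S/A}^{\HT, r}\{i\}\}_{r \geq 1}$ as a continuous $p$-complete $F$-$V$-procomplex in the sense of Langer--Zink, and then to invoke its universal property to produce the desired comparison with $W_\bullet \Omega_{S/\overline{A}}^{\bullet, \mathrm{cont}}$. The overall strategy follows the case of a perfectoid base treated in \cite{BMS1} via $A\Omega$-cohomology, upgraded to arbitrary perfect prisms as in \cite{molokov2020prismatic}.

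First I would assemble the procomplex structure. Construction~\ref{Building a complex} above already produces, from the Beilinson $t$-structure applied to the $I_r$-adic filtration under the $r$-th iterated relative prismatic Frobenius, a genuine cochain complex for each $r$ whose $i$-th term is $\gr_i^{\conj} \Del_{S/A}^{\HT, r}\{i\}$ and whose differential is the associated Bockstein. The Restriction, Frobenius, and Verschiebung maps of Theorem~\ref{thm1} descend to these graded pieces and furnish the inter-level structure; in degree zero one has $\nr^0 \Del_{S/A} \simeq W_r(S)$, recovering the Witt-ring identification. To verify the Langer--Zink axioms (notably $FV = p$, $FdV = d$, and the compatibility of $V$ with multiplication) I would reduce to the quasiregular-semiperfectoid case, where by Proposition~\ref{r-Nygaard filtration for quasiregular-semiperfectoid} the $r$-Nygaard filtration is just $(\varphi^{-r}(d_r))^i \Del_S$ in a discrete ring, and the axioms become explicit identities between powers of the distinguished element and Frobenius.

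Second, the universal property yields a natural comparison morphism $W_r \Omega_{S/\overline{A}}^{i, \mathrm{cont}} \to \gr_i^{\conj} \Del_{S/A}^{\HT, r}\{i\}$, which I would show is an isomorphism by induction on $r$. The base case $r = 1$ is the classical relative Hodge--Tate comparison recalled just before the theorem. For the inductive step I would combine the defining short exact sequences of Langer--Zink, expressing $W_{r+1}\Omega^i$ as an extension of $W_r\Omega^i$ by the image of Verschiebung, with the corresponding fibre sequences on the prismatic side obtained from the iterated pullback definition of $\n_{r+1}^{\geq i}$: adding one more pullback factor of $\n^{\geq i}$ to $\nr^{\geq i}$ yields precisely the matching extension at the level of graded pieces, via the recursive identification of the $r$-Nygaard filtration stated in Proposition~\ref{Recurring identification of the $r$-Nygaard filtration}.

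The main obstacle will be verifying the compatibility of all this intertwining structure: the Bockstein differentials coming from the Beilinson $t$-structure must satisfy the Langer--Zink relations on the nose rather than only up to coherent homotopy, and the $\Res, F, V$ of Theorem~\ref{thm1} must match the Langer--Zink $\Res, F, V$ under the comparison. The perfect-prism hypothesis is essential here, since it identifies the prismatic Frobenius on $A$ with the Witt-vector Frobenius, anchoring the relationship between $\varphi^r$ on $\Del_{S/A}$ and the Witt-vector structure on $\gr_i^{\conj}$. Once these compatibilities are secured, the induction closes and the theorem follows by $p$-complete faithfully flat descent from the quasiregular-semiperfectoid case.
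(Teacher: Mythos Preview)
Your overall architecture---build an $\F$-$\V$-procomplex via Construction~\ref{Building a complex}, invoke the Langer--Zink universal property, then verify the comparison map is an isomorphism---is exactly what the paper points to (the paper gives no self-contained proof here; it cites \cite{BMS1,molokov2020prismatic} and only contributes Construction~\ref{Building a complex}). Where you diverge is in the execution, and the divergence introduces a genuine gap.

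The problem is your repeated appeal to the quasiregular-semiperfectoid case. For a QRSPerfd ring the prismatic complex is discrete, so the Bockstein differential you built vanishes identically and the axiom $FdV=d$ becomes $0=0$; nothing about the smooth case follows. More seriously, the Langer--Zink complex $W_r\Omega_{S/\overline{A}}^{\bullet}$ is defined only for smooth $S$ and is \emph{not} a quasisyntomic sheaf one can compute on QRSPerfd covers and then descend. So your final sentence (``the theorem follows by $p$-complete faithfully flat descent from the quasiregular-semiperfectoid case'') has no content: there is nothing on the de Rham--Witt side to descend.

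The argument in \cite[Sec.~11]{BMS1} and \cite{molokov2020prismatic} runs the other way. The $\F$-$\V$-procomplex axioms are verified \emph{directly on the smooth $S$} using the algebra of the d\'ecalage functor $L\eta_{I_r}$ (this is precisely what the paper's Construction~\ref{Building a complex} is abstracting), and the comparison map from the universal property is shown to be an isomorphism by \'etale-local reduction to a torus $\overline{A}\langle t_1^{\pm 1},\dots,t_n^{\pm 1}\rangle$, where both sides are computed explicitly. Your proposed induction on $r$ via the iterated pullback is also not how those references proceed; matching the Langer--Zink standard filtration (built from $V^r$ and $dV^r$) against the extra pullback factor in $\n_{r+1}^{\geq i}$ is not the one-line observation you suggest, and making it precise amounts to redoing the whole comparison.
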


In order to be able to generalize such a theorem, away from a perfectoid base, one should have a good candidate for an absolute comparison. This will have to be based on an absolute version of the conjugate filtration:

\begin{con}
    Let $S$ be an animated ring. Then the prism maps to the $r$-Hodge--Tate divisor, together with the previous construction of the relative conjugate filtration, induce a conjugate filtration on the corresponding $r$-Hodge--Tate sheaf associated with $S$:
    \begin{equation*}
        \Fil_{\bullet}^{\conj} \mathcal{H}_{\Del^{\HT, r}} (S)
    \end{equation*}
\end{con}
Understanding the associated graded terms of this object, would hopefully give rise to a suitable comparison with a de Rham--Witt complex. Note that one is able to identify the $0$-th graded piece, using the Witt vector functor:
\begin{equation*}
    \gr_0^{\conj} \mathcal{H}_{\Del^{\HT, r}} (S) \simeq \W_r (S) \otimes \mathcal{O}_{\Sigma^{\HT, r}}
\end{equation*}
In fact, for $r=1$, there exists a complete description of the associated graded terms, as a result of the Hodge--Tate comparison, \cite[]{APC}:
\begin{equation*}
    \gr_i^{\conj} \mathcal{H}_{\overline{\Del}} (S) \simeq L\widehat{\Omega}_S^i \otimes \mathcal{O}_{\Sigma^{\HT}}
\end{equation*}

\newpage

\section{The cases of mixed/positive characteristic}

We now restrict to the cases of mixed and positive characteristic. In this setting, one deals with the $A\Omega$-cohomology of \cite{BMS1} and the associated phenomena. Of particular importance is the discussion following \cite[Rem. 1.20]{BMS1}, regarding the lifts of the Cartier isomorphism to mixed characteristic, which is exactly the one obtained from our viewpoint. This is just a rewriting of these results, from the filtered viewpoint.

\subsection{The mixed characteristic case}
In this section, we discuss the contents of Theorem \ref{thm5}
\begin{prop}
    Let $S$ be a $p$-completely smooth ring over a fixed perfectoid base $R_0$. Then, prismatic cohomology $\Del_S \simeq \Del_{S/R_0}$ is identified with the $A\Omega_S$ cohomology of \cite{BMS1}. Moreover, the $r$-Nygaard filtration $\nr^{\geq \bullet} \Del_S$ is identified with the filtration induced from the d\'ecalage functor $L\eta_{\xi_r}^{\geq \bullet} A\Omega_S$, for the element $\xi_r \in \ainf (R_0)$.
\end{prop}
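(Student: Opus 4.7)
The plan is to reduce the second assertion to an instance of Corollary \ref{r-Nygaard filtration via decalage}, applied to the perfect prism $(\ainf (R_0), (\xi))$ associated with the perfectoid base $R_0$. First I would establish the two preliminary isomorphisms $\Del_S \simeq \Del_{S/R_0} \simeq A\Omega_S$. The identification $\Del_S \simeq \Del_{S/R_0}$ uses that the perfect prism $(\ainf (R_0), \ker \vartheta)$ is the initial object of the absolute prismatic site of $R_0$; since $S$ is $p$-completely smooth over $R_0$, its absolute prismatic cohomology is computed by the relative prismatic cohomology over $\ainf (R_0)$ (see \cite[Sec. 7]{BS} or \cite[Ch. 3--4]{APC}). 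The identification $\Del_{S/R_0} \simeq A\Omega_S$ is the prismatic-$A\Omega$ comparison of \cite[Thm. 17.2]{BS}, refining \cite[Thm. 1.8]{BMS1}.

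Next, setting $(A,I) := (\ainf (R_0), (\xi))$, one has $I_r = (\xi_r)$, and I would apply Corollary \ref{r-Nygaard filtration via decalage} to the $p$-completely smooth $\overline{A}$-algebra $S$. The corollary produces an isomorphism $(\varphi_A^r)^* \Del_{S/A} \simeq L\eta_{\xi_r} \Del_{S/A}$, and identifies $\nr^{\geq \bullet} (\varphi_A^r)^* \Del_{S/A}$ with the connective cover in the Beilinson $t$-structure of the $\xi_r$-adic filtration on $\Del_{S/A}$. A standard property of the d\'ecalage functor (cf. \cite[Prop. 6.12]{BMS1} or \cite[Cor. 7.10]{BMS2}) realizes this connective cover as the filtration $L\eta_{\xi_r}^{\geq \bullet} \Del_{S/A}$. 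Because the base prism is perfect, $\varphi_A$ is an automorphism of $\ainf (R_0)$, so the Frobenius pullback $(\varphi_A^r)^*$ is itself an equivalence; transporting the result back along its inverse and combining with the preliminary identifications yields $\nr^{\geq \bullet} \Del_S \simeq L\eta_{\xi_r}^{\geq \bullet} A\Omega_S$.

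The main obstacle I anticipate is careful bookkeeping of the Frobenius twist. Corollary \ref{r-Nygaard filtration via decalage} describes the $r$-Nygaard filtration on $(\varphi_A^r)^* \Del_{S/A}$, while the proposition concerns a filtration on the untwisted $\Del_S \simeq A\Omega_S$. On a perfect prism this twist is formally invertible, but one must verify that the induced filtration on $A\Omega_S$ is literally the d\'ecalage filtration $L\eta_{\xi_r}^{\geq \bullet}$ and not a Frobenius-conjugate variant; this reduces to the compatibility of $L\eta_{\xi_r}$ with Frobenius pullback, via the identification $\varphi^r(\xi_r) = \widetilde{\xi}_r$, which is standard material from \cite{BMS1}.
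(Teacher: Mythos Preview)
Your proposal is correct and follows essentially the same approach as the paper: both invoke the prismatic--$A\Omega$ comparison for the first identification (the paper cites \cite[Prop.~9.10]{BMS2} rather than \cite[Thm.~17.2]{BS}, but these establish the same fact), and both deduce the filtration statement from Corollary~\ref{r-Nygaard filtration via decalage} together with the Beilinson $t$-structure/d\'ecalage dictionary of \cite[Cor.~7.10, Rem.~7.11]{BMS2}. Your explicit discussion of the Frobenius twist bookkeeping is more careful than the paper's terse proof, but it is the same argument.
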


\begin{proof}
    The first part is explained, for example, in \cite[Prop. 9.10]{BMS2}. Identifying the $r$-Nygaard filtration, with the one induced from the d\'ecalage with respect to $\xi_r \in \ainf (R_0)$, follows from Corollary \ref{r-Nygaard filtration via decalage}. A related discussion can be found in \cite[Cor. 7.10, Rem. 7.11]{BMS2}.
\end{proof}

After this identification, it follows that the lift of the Cartier isomorphism to the mixed characteristic, introduced in \cite{BMS1} is nothing but essentially the lower row of the following commutative diagram:
\begin{equation*}
    \begin{tikzcd}[column sep=huge]
        \nr^{\geq i} A\Omega_S \simeq L \eta_{\xi_r}^{\geq i} A\Omega_S \{ i\} \arrow[d] \arrow[rr, "\varphi_{r,i}"] & & A\Omega_S \{ i\} \arrow[d] \\
        \nr^i A\Omega_S \arrow[rr] \arrow[dr, bend right=15, "\simeq" description] & & \widetilde{W_r\Omega}_S \{ i\} \\
        & \tau^{\leq i} \widetilde{W_r\Omega}_S \{ i\} \arrow[ur, bend right=15, hook] &
    \end{tikzcd}
\end{equation*}

From the homotopy theoretic viewpoint the above diagram arises by passing to the associated graded pieces of the even parts for the motivic filtrations of the invariants in the following diagram:
\begin{equation*}
    \begin{tikzcd}[column sep=huge]
        \TR^r (S;\Zp)^{hS^1} \arrow[r] \arrow[d] & \TC^- (S;\Zp) \arrow[r, "\varphi^{hS^1}"] \arrow[d] & \TP (S;\Zp) \arrow[d] \\
        \TR^r (S;\Zp) \arrow[r] & \THH (S;\Zp)^{hC_{p^{r-1}}} \arrow[r, "\varphi^{hC_{p^{r-1}}}"] & \THH (S;\Zp)^{tC_{p^r}}
    \end{tikzcd}
\end{equation*}

More specifically, one has the following isomorphism, for $1 \leq r < \infty$:
\begin{equation*}
    \begin{tikzcd}
        \grM^{i, \mathrm{even}} \TR^r (S;\Zp) \simeq \nr^i A\Omega_S [2i] \arrow[r, "\simeq"] & \tau^{\leq i} \widetilde{W_r\Omega}_S \{ i\} [2i]
    \end{tikzcd}
\end{equation*}
By taking the limit with respect to the Frobenius maps, we obtain the following identifiation, regarding topological Frobenius homology:
\begin{equation*}
    \grM^{i, \mathrm{even}} \TF (S;\Zp) \simeq \tau^{\leq i} A\Omega_S \{ i\} [2i]
\end{equation*}

\subsection{The positive characteristic case}

Let us, finally, treat the case of positive characteristic and the contents of Theorem \ref{thm6}. Given a quasisyntomic $\Fp$-algebra $S$, we know from \cite{BMS2} that its prismatic cohomology is identified with the Nygaard-completed derived de Rham complex:
\begin{equation*}
    \Delc_S \simeq \widehat{\mathrm{LW}\Omega}_S
\end{equation*}
In particular, if $S$ is a quasiregular-semiperfect $\Fp$-algebra, we have that $\Delc_S \simeq \acrys (S)$. The Nygaard filtration has the following simple description:
\begin{equation*}
    \n^{\geq i} \acrys (S) = \Bigg\{ x \in \acrys (S) \; \Big| \; \varphi^i (x) \in p^i \acrys (S) \Bigg\}
\end{equation*}
Using this, we can show the odd vanishing for the invariants of $\TR$:

\begin{proof}
    Consider the exact sequence coming from the iterated pullback description of $\TR^r (S;\Zp)^{hS^1}$ for a quasiregular - semiperfect $\Fp$-algebra $S$:
    \begin{equation*}
        \begin{tikzcd}[row sep=tiny]
            0 \arrow[r] & \nr^{\geq i} \acrys (S) \{ i\} \arrow[r] & \displaystyle{\prod_{1 \leq k \leq r} \n^{\geq i} \acrys (S) \{ i\}} \arrow[r, "\alpha"] & \, \\
            \, \arrow[r, "\alpha"] & \displaystyle{\prod_{1 \leq k \leq r-1} \acrys (S) \{ i\}} \arrow[r] & \pi_{2i-1} \TR^r (S;\Zp)^{hS^1} \arrow[r] & 0
        \end{tikzcd}
    \end{equation*}
    Following \cite[Sec. 8]{BMS2}, we know that the map
    \begin{equation*}
        \alpha \circ \mathrm{diag} : \n^{\geq i} \acrys (S) \{ i\} \to \prod_{1 \leq k \leq r} \acrys (S) \{ i\}
    \end{equation*}
    is surjective. Therefore the same is also true for $\alpha$ itself. The vanishing of $\pi_{2i-1} \TR^r (S;\Zp)^{hS^1}$ follows. Letting $v_r \mapsto 0$, we can also see that $\TR^r (S;\Zp)$ is also even.

    Taking the limit over restriction maps, since we are in the characteristic $p>0$ case, notice that the $\limr^1$ term vanishes, therefore $\TR (S;\Zp)^{hS^1}$ and $\TR (S;\Zp)$ are also concentrated on even degrees.

    Hence, the discussion we had in the perfectoid case, also applies here. In particular, because of the vanishing of odd homotopy groups, it follows that the $S^1$-homotopy fixed points spectral sequence degenerates and the $r$-Nygaard filtration on $\acrys (S)$ is indeed the filtration coming from the spectral sequence.  

\end{proof}

Applying quasisyntomic descent, the following is a direct corollary of what we just discussed:

\begin{cor}
    Let $S$ be a quasisyntomic algebra over $\Fp$. Then the spectra $\TR^r (S;\Zp)^{hS^1}$ and $\TR^r (S;\Zp)$ are equipped with motivic filtrations, whose graded pieces can be identified with:
    \begin{gather*}
        \begin{cases}            
         \grM^i \TR^r (S;\Zp)^{hS^1} \simeq \nr^{\geq i} \Delc_S \{ i\} [2i] \\[5pt]
         \grM^i \TR^r (S;\Zp) \simeq \nr^i \Delc_S \{ i\} [2i]
        \end{cases}
    \end{gather*}
     
    In an analogous manner, the graded pieces for the motivic filtrations of $\widetilde{\TC}^r \big( \TR (S;\Zp) \big)$ and $\TC^r \big( \TR (S;\Zp) \Big)$ can be shown to be equivalent to:
    \begin{gather*}
        \grM^i \widetilde{\TC}^r \Big( \TR (S;\Zp) \Big) \simeq \fib \Big( \Res^{hS^1} - \F^{hS^1} : \nr^{\geq i} \Delc_S \{ i\} [2i] \to \n_{r-1}^{\geq i} \Delc_S \{ i\} [2i] \Big) \\
         \grM^i \TC^r \Big( \TR (S;\Zp) \Big) \simeq \fib \Big( \can - \varphi^{hS^1} : \nr^{\geq i} \Delc_S \{ i\} [2i] \to \Delc_S \{ i\} [2i] \Big)
    \end{gather*}
\end{cor}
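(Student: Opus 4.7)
The plan is to reduce everything to the quasiregular-semiperfect case treated just above, and then propagate the identifications by quasisyntomic descent and by fibre sequence manipulations. Throughout, all invariants in sight are already known from Theorem \ref{thm3} to be quasisyntomic sheaves equipped with the motivic filtrations coming from the quasisyntomic sheafification of the double-speed Postnikov filtrations, so the work is purely one of computing the sheafified graded pieces.

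First I would observe that the preceding theorem establishes, for every quasiregular-semiperfect $\Fp$-algebra $S$, the odd-vanishing of $\TR^r(S;\Zp)^{hS^1}$ and $\TR^r(S;\Zp)$ together with the even-degree identifications
\begin{equation*}
\pi_{2i}\TR^r(S;\Zp)^{hS^1}\simeq \nr^{\geq i}\Delc_S\{i\},\qquad \pi_{2i}\TR^r(S;\Zp)\simeq \nr^i\Delc_S\{i\}.
\end{equation*}
Consequently the two-term complexes $\tau_{[2i-1,2i]}\TR^r(-;\Zp)^{hS^1}$ and $\tau_{[2i-1,2i]}\TR^r(-;\Zp)$ are, on the site of quasiregular-semiperfect $\Fp$-algebras, already concentrated in a single degree and computed by the stated $r$-Nygaard filtered/graded piece. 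Since quasiregular-semiperfect $\Fp$-algebras form a basis for the quasisyntomic topology on $\mathrm{QSyn}_{\Fp}$, and since both the functor $S\mapsto \grM^i\TR^r(S;\Zp)^{hS^1}$ and the sheaf $S\mapsto \nr^{\geq i}\Delc_S\{i\}[2i]$ are by construction quasisyntomic sheaves, the identification on the basis extends to all of $\mathrm{QSyn}_{\Fp}$ by descent. The same argument, after passing to the cofibre of the class $v_r$ exactly as in Proposition \ref{quotient}, yields the identification for $\grM^i\TR^r(-;\Zp)$.

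For the statements about $\widetilde{\TC}^r(\TR(-;\Zp))$ and $\TC^r(\TR(-;\Zp))$, I would use that taking associated graded pieces of complete, $\Z$-indexed filtrations is exact, so commutes with fibre sequences. Applying this to the defining fibre sequences
\begin{equation*}
\widetilde{\TC}^r(\TR(-;\Zp))\to \TR^r(-;\Zp)^{hS^1}\xrightarrow{\Res^{hS^1}-\F^{hS^1}}\TR^{r-1}(-;\Zp)^{hS^1}
\end{equation*}
and
\begin{equation*}
\TC^r(\TR(-;\Zp))\to \TR^r(-;\Zp)^{hS^1}\xrightarrow{\can-\varphi^{hS^1}}\TP(-;\Zp),
\end{equation*}
and substituting the previously established identifications of $\grM^i$ for the right-hand sides (recall that $\grM^i\TP(-;\Zp)\simeq \Delc_{(-)}\{i\}[2i]$ from \cite{BMS2}), one obtains the claimed descriptions of the graded pieces as fibres in filtered/graded $r$-Nygaard prismatic cohomology.

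The only subtle point, and where I expect to spend most of the effort, is making sure that the Restriction, Frobenius, canonical, and higher Frobenius maps at the level of $\TR$-type invariants are genuinely compatible, on the quasiregular-semiperfect basis, with the maps $\Res,\F,\can,\varphi_{r,i}$ on the $r$-Nygaard filtered prismatic cohomology. Once this compatibility is known on the basis, quasisyntomic descent promotes it to $\mathrm{QSyn}_{\Fp}$, and the fibre sequence argument above then goes through verbatim.
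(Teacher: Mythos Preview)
Your proposal is correct and matches the paper's approach: the paper states this corollary immediately after the quasiregular-semiperfect odd-vanishing result with the single sentence ``Applying quasisyntomic descent, the following is a direct corollary of what we just discussed,'' and your sketch simply spells out that descent argument together with the exactness of $\grM^i$ on the defining fibre sequences for $\widetilde{\TC}^r$ and $\TC^r$. The compatibility of $\Res$, $\F$, $\can$, $\varphi^{hS^1}$ with the corresponding maps on $r$-Nygaard filtered prismatic cohomology that you flag as the subtle point is already established in the QRSPerfd computations of Section~4, so no additional work is needed there.
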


Finally, let $S$ be a smooth $k$-algebra, where $k$ is a perfect field of characteristic $p>0$. From the identification of the associated graded pieces for the even parts of the motivic filtration of $\TR^r$ in terms of the relative de Rham--Witt complex of Langer--Zink and the related discussion on the mixed characteristic version of the Cartier isomorphism, we have the following equivalence:
\begin{equation*}
    \begin{cases}
        \grM^i \TR^r (S;\Zp) \simeq \tau^{\leq i} W_r\Omega_{S/k}^{\bullet} [2i] \\[5pt]
        \grM^i \TR (S;\Zp) \simeq \grM^i \TF (S;\Zp) \simeq \tau^{\leq i} W\Omega_{S/k}^{\bullet} [2i]
    \end{cases}
\end{equation*}

\newpage

\section{Remarks and description of ongoing work}

In the theory of prismatic cohomology there are two sides, which behave as opposite ones for the same phenomenon. Let $X= \spf S$ be a $p$-adic formal scheme. On one hand, we can associate to it certain homotopy theoretic objects, such as its topological Hochschild homology $\THH (S;\Zp)$ and associated invariants obtained from suitably studying the $S^1$-action: $\TC^- (S;\Zp)$, $\TP (S;\Zp)$, $\THH (S;\Zp)^{tC_p}$, $\TC (S;\Zp)$, etc. On the other hand, we can associate to $X$ certain stacks which encode its prismatic cohomology and related structure, such as the prismatization stack $\Sigma_X$ and its refinements $\Sigma_X'$, $\Sigma_X''$, the Hodge--Tate divisor $\Sigma_X^{\HT}$, the diffracted Hodge stack $X^{\slashed{D}}$, etc. In what follows, we briefly discuss some ideas, both in the homotopy-theoretic and stacky perspectives, which build on the work of this article and which we hope to address, in further detail, in future work.

\subsection{The geometry of the $r$-Hodge--Tate divisor}

In the process of trying to understand the $r$-Hodge--Tate cohomology of a given (animated $p$-complete) ring $S$, we introduced the $r$-Hodge--Tate divisor $\Sigma^{\HT, r}$ of the prismatization stack $\Sigma$. One of its main properties is that it is the target of prismatic maps from $\spf A/I_r$:
\begin{equation*}
    \begin{tikzcd}[column sep=huge]
        \spf A/I_r \arrow[r, "\rho_A^{\HT, r}"] \arrow[d, hook] & \Sigma^{\HT, r} \arrow[d, hook] \\
        \spf A \arrow[r, "\rho_A"] & \Sigma
    \end{tikzcd}
\end{equation*}
The role of the prismatic maps becomes more apparent in the use of geometric objects which classify "distinguished elements of degree $r$". We consider such a geometric object to be $\Sigma_r$, which is defined as the quotient $[W_{\mathrm{prim}=r}/\W^{\times}]$. Then we have that $\Sigma_1 = \Sigma$ and, for general $r \geq 1$, there are natural transition maps $\Sigma_{r+1} \to \Sigma_r$ and $\Sigma_r \to \Sigma_{r+1}$. It follows that one can identify the central fibre of $\Sigma_r$ as the $r$-Hodge--Tate divisor:
\begin{equation*}
    \begin{tikzcd}[column sep=huge]
        \Sigma^{\HT, r} \arrow[r] \arrow[d, hook] & \big[ \{ 0\} / \mathbb{G}_m \big] \arrow[d, hook] & \Sigma^{\HT, r} \arrow[r] \arrow[d] & \Sigma^{\HT, r+1} \arrow[d] \\
        \Sigma_r \arrow[r] & \big[ \widehat{\mathbb{A}}^1 / \mathbb{G}_m \big] & \Sigma_r \arrow[r] & \Sigma_{r+1}
    \end{tikzcd}
\end{equation*}

The stack $\Sigma_r$ interacts well with the $r$-th iteration of the prismatic Frobenius $\Phi^r$, which in turn, results to a generalization of the Frobenius square related to $\Sigma$ \cite[Prop. 3.6.6]{APC}. Using the diffracted prism associated with the element $V(1)$, one can obtain an explicit presentation of $\Sigma^{\HT, r}$ in terms of $(\W_r^{\#})^{\times}$, generalizing the description for the Hodge--Tate divisor:
\begin{equation*}
    \Sigma^{\HT} \simeq \spf \Zp \times B \mathbb{G}_m^{\#}
\end{equation*}
culminating to a "Sen operator $\Theta_r$ of degree $r$" and an "$r$-truncated diffracted Hodge--Witt complex":
\begin{equation*}
    \Fil_i^{\conj} \Del_{S/\Zp [[\widetilde{p}]]}^{\HT, r} \simeq \Fil_i^{\conj} W_r\widehat{\Omega}_S^{\slashed{D}}
\end{equation*}
These are expected to fit in a fibre sequence, together with the associated graded terms of the $r$-Nygaard filtration:
\begin{equation*}
    \begin{tikzcd}[column sep=huge]
        \nr^i \Del_S \{ n\} \arrow[r] & \Fil_i^{\conj} W_r\widehat{\Omega}_S^{\slashed{D}} \arrow[r, "\Theta_r +i"] & \Fil_{i-1}^{\conj} W_r\widehat{\Omega}_S^{\slashed{D}}
    \end{tikzcd}
\end{equation*}

Ultimately, these phenomena are expected to pave the way towards a comparison with the absolute de Rham--Witt complex of Hesselholt--Madsen:
\begin{equation*}
    \begin{cases}
        \gr_i^{\conj} \mathcal{H}_{\Del^{\HT,r}} (S) \{ i\} \simeq LW_r\widehat{\Omega}_S^i \otimes \mathcal{O}_{\Sigma^{\HT,r}} [-i] \\[5pt]
        \gr_i^{\conj} \Del_{S/ \Zp [[\widetilde{p}]]}^{\HT,r} \{ i\} \simeq W_r \widehat{\Omega}_S^{\slashed{D}} [-i]
    \end{cases}
\end{equation*}
Such an expectation is also backed by Hesselholt's comparison between the Witt complex $\pi_* \TR^r (S)$ and the absolute de Rham--Witt complex $W_r\Omega_S^i$, for degrees $\leq 1$, \cite{hesselholttopological}. Moreover, the above phenomena should also extend to a comparison in the integral setting, involving the global prismatic complexes of Bhatt--Lurie and the big de Rham--Witt complex of Hesselholt--Madsen.

Going back to the diagram discussed in the introduction, the missing slots are occupied by the $r$-truncated Hodge--Witt complex, equipped with its conjugate filtration. The expectation is that the de Rham specialization lifts to a comparison from the $r$-Nygaard filtered prismatic cohomology to the Hodge-filtered $r$-truncated de Rham--Witt complex:
\begin{equation*}
    \begin{tikzcd}[column sep=huge, row sep=huge]
        \Fil_{\hod}^i LW_r\widehat{\Omega}_S \arrow[d] & \nr^{\geq i} \Del_S \{ i\} \arrow[l] \arrow[r, "\varphi_{r,i}"] \arrow[d] & \Del_S \{ i\} \arrow[d] \\
        \gr_{\hod}^i LW_r\widehat{\Omega}_S \arrow[d] & \nr^i \Del_S \{ i\} \arrow[l] \arrow[r] \arrow[d] & \Del_S^{\HT, r} \{ i\} \arrow[d] \\
        LW_r\widehat{\Omega}_S^i [-i] & \Fil_i^{\conj} W_r\widehat{\Omega}_S^{\slashed{D}} \arrow[l] \arrow[r] & W_r\widehat{\Omega}_S^{\slashed{D}}
    \end{tikzcd}
\end{equation*}

\subsection{On the prismatization stacks $\Sigma_r'$}

As we already noted in the introduction, one of the main ideas explained in Scholze's ICM address \cite{ScholzeICM} was that $p$-adic cohomology theories should have certain similarities to shtukas. Equivalently, a slightly different way of saying this is that there should exist an automorphic viewpoint for cohomology theories. Of course, such an aim aligns with the basic principles of the Langlands program.

Strong evidence for such a claim can be found in Fargues' theorem, which explains the equivalence between Breuil--Kisin--Fargues modules and certain shtukas with one leg, that arise in the world of $p$-adic geometry \cite[Sec. 14]{ScholzeWeinstein}. Prismatic theory can then be viewed as an attempt to geometrize and generalize the theory of Breuil--Kisin--Fargues modules. Therefore, it would be natural to ask how certain automorphic objects, such as shtukas with $r$-legs in the sense of $p$-adic geometry, would be related to the prismatic perspective.

We believe that such information is captured by the $r$-Nygaard filtration in prismatic cohomology. This should have a stacky counterpart. Indeed, there exist prismatization stacks $\Sigma_r'$, which encode the relevant data, for every $r \geq 1$. These come together with morphisms analogous to the $r$-divided prismatic Frobenius and inclusion maps:
\begin{equation*}
    r \text{-divided } \mathrm{Frob} : \Sigma \longrightarrow \Sigma_r' \qquad \mathrm{incl}: \Sigma \longrightarrow \Sigma_r'
\end{equation*}
Passing to the associated stable $\infty$-categories of quasi-coherent sheaves, one obtains a family of correspondences, which can be viewed as Hecke correspondences in the prismatic setting:
\begin{equation*}
    \begin{tikzcd}[column sep=huge, row sep=tiny]
        & \mathrm{QCoh} ( \Sigma_r') \arrow[dl, "\stackrel{\leftarrow}{h}"'] \arrow[dr, "\stackrel{\rightarrow}{h}"] & \\
        \mathrm{QCoh} (\Sigma) & & \mathrm{QCoh} (\Sigma)
    \end{tikzcd}
\end{equation*}
where the map on the left (resp. right) corresponds to the $r$-divided Frobenius $\varphi_{r,\bullet}$ (resp. to the canonical map). Taking the equalizer for the maps of $\infty$-categories of quasi-coherent sheaves (coequalizer on the level of stacks) should produce prismatic counterparts of shtukas with $r$-number of legs. These prismatic constructions should be thought of as analogous to the local Hecke stack and the Beilinson--Drinfeld affine Grassmannian for degree $r$ Cartier divisors discussed in \cite[Sec. VI.1]{FarguesScholze} and generalizing the constructions of Zhu in positive characteristic \cite{zhu2017affine, Zhu}.

The properties of geometric objects often encountered in the realm of the geometric Langlands program, such as the (Beilinson--Drinfeld) affine Grassmannian and the Hecke correspondences, are highlighted when viewed under the lens of factorization geometry. From the homotopy theoretic perspective, we believe this is intricately related to factorization properties related to $\TR^r$. Remember that topological Hochschild homology of a connective $\einfty$-algebra $A$ is obtained as its factorization homology over the circle:
\begin{equation*}
    \THH (A) \simeq \int_{S^1} A
\end{equation*}
In an analogous way, we expect to obtain the $p$-typical $\TR^r$, by constructing a modification of the Ran space related to $C_{p^{\infty}} \subset S^1$. Repeating over the different primes should also produce an integral version.

Furthermore, given a ring $A$, we hope to construct a spectral version of the affine Grassmannian, using tools from spectral algebraic geometry, which captures information regarding $\TR^r$. In analogy, passing to the local Hecke stack, should relate to $(\TR^r)^{hS^1}$:
\begin{equation*}
    \begin{tikzcd}[column sep=huge]
        \TR^r \arrow[r, no head, dotted, leftrightarrow] &[-10pt] \mathrm{Gr}_{\mathrm{GL}_{\infty}}^{(r)} = \Big[ L^{(r)}/L^{(r), +} \Big], &[-30pt]
        (\TR^r)^{hS^1} \arrow[r, no head, dotted, leftrightarrow] &[-10pt] \mathrm{Hecke}^{(r)}= \Big[ L^{(r),+}\textbackslash L^{(r)}/L^{(r), +} \Big]
    \end{tikzcd}
\end{equation*}
Note that a related construction of topological versions of toy shtukas and excursion operations has been outlined in \cite[Sec. 5]{ToyShtuka}.

The expectation is that studying the geometry of these objects and certain $\infty$-categories of sheaves on them, should yield motivic information related to $\TR^r$ and of its $S^1$-homotopy fixed points. The relation to algebraic $\K$-theory, and in general the theory of motives, comes from the passage to cyclic $\K$-theory/ $\K$-theory of endomorphisms and then to $\TR^r$. A recent viewpoint on the theory of the trace map is outlined in \cite{harpaz2024trace} and also relates to the story of curves in $\K$-theory via the study of the augmentation map $\Sigma_+^{\infty} =: \Sph [t] \to \Sph$, as pointed out in \cite{arone2020goodwillie}. Moreover, note that the relation of $\TC$ to Langlands flavoured phenomena is particularly highlighted in the work of Clausen \cite{clausen2017k}, in which a universal construction for the Artin reciprocity map is provided, using Selmer $\K$-theory.

\printbibliography

@article{riggenbach2022k,
  title={$ K $-Theory of Truncated Polynomials},
  author={Riggenbach, Noah},
  journal={arXiv:2211.11110},
  year={2022}
}

@incollection{arone2020goodwillie,
  title={Goodwillie calculus},
  author={Arone, Gregory and Ching, Michael},
  booktitle={Handbook of homotopy theory},
  pages={1--38},
  year={2020},
  publisher={Chapman and Hall/CRC}
}

@article{harpaz2024trace,
  title={Trace methods for stable categories I: The linear approximation of algebraic K-theory},
  author={Harpaz, Yonatan and Nikolaus, Thomas and Saunier, Victor},
  journal={arXiv:2411.04743},
  year={2024}
}

@phdthesis{bouis2024motivic,
  title={On the motivic cohomology of mixed characteristic schemes},
  author={Bouis, Tess},
  year={2024},
  school={Universit{\'e} Paris-Saclay}
}

@article{betley2005cyclotomic,
  title={The cyclotomic trace and curves on $\K$-theory},
  author={Betley, Stanislaw and Schlichtkrull, Christian},
  journal={Topology},
  volume={44},
  number={4},
  pages={845--874},
  year={2005},
  publisher={Elsevier}
}

@article{nikolaustopological,
  title={Topological Hochschild homology and cyclic $\K$-theory},
  author={Nikolaus, Thomas},
  journal={Author's website},
  year={2018}
}

@article{blumberg2016,
  title={$\K$-theory of endomorphisms via noncommutative motives},
  author={Blumberg, Andrew and Gepner, David and Tabuada, Gon{\c{c}}alo},
  journal={Transactions of the American Mathematical Society},
  volume={368},
  number={2},
  pages={1435--1465},
  year={2016}
}

@inproceedings{hesselholttopological,
  title={Topological Hochschild Homology and the de Rham-Witt Complex for $\Z_{(p)}$-Algebras.},
  author={Hesselholt, Lars},
  booktitle={Homotopy Theory: Relations with Algebraic Geometry, Group Cohomology, and Algebraic $\K$-theory, an International Conference on Algebraic Topology, March 24-28, 2002, Northwestern University},
  volume={346},
  pages={253},
  year={2004},
  organization={American Mathematical Soc.}
}

@phdthesis{andriopoulos2024motivic,
  title={On the Motivic Filtration of TR},
  author={Andriopoulos, Faidon},
  year={2024},
  school={The University of Chicago, \href{https://knowledge.uchicago.edu/record/12395}{https://knowledge.uchicago.edu/record/12395}}
}

@article{molokov2020prismatic,
  title={Prismatic cohomology and de Rham-Witt forms},
  author={Molokov, Semen},
  journal={arXiv:2008.04956},
  year={2020}
}

@article{elmanto2023motivic,
  title={Motivic cohomology of equicharacteristic schemes},
  author={Elmanto, Elden and Morrow, Matthew},
  journal={arXiv:2309.08463},
  year={2023}
}

@article{AMMN,
  title={On the Beilinson fiber square},
  author={Antieau, Benjamin and Mathew, Akhil and Morrow, Matthew and Nikolaus, Thomas},
  journal={Duke Mathematical Journal},
  volume={171},
  number={18},
  pages={3707--3806},
  year={2022},
  publisher={Duke University Press}
}

@article{AntieauDerived,
  title={Periodic cyclic homology and derived de Rham cohomology},
  author={Antieau, Benjamin},
  journal={Annals of K-theory},
  volume={4},
  number={3},
  pages={505--519},
  year={2019},
  publisher={Mathematical Sciences Publishers}
}

@article{AN,
  title={Cartier modules and cyclotomic spectra},
  author={Antieau, Benjamin and Nikolaus, Thomas},
  journal={Journal of the American Mathematical Society},
  volume={34},
  number={1},
  pages={1--78},
  year={2021}
}

@article{BMS1,
  title={Integral $p$-adic Hodge theory},
  author={Bhatt, Bhargav and Morrow, Matthew and Scholze, Peter},
  journal={Publications math{\'e}matiques de l'IH{\'E}S},
  volume={128},
  number={1},
  pages={219--397},
  year={2018},
  publisher={Springer}
}

@article{BMS2,
  title={Topological Hochschild homology and integral $p$-adic Hodge theory},
  author={Bhatt, Bhargav and Morrow, Matthew and Scholze, Peter},
  journal={Publications math{\'e}matiques de l'IH{\'E}S},
  volume={129},
  number={1},
  pages={199--310},
  year={2019},
  publisher={Springer}
}

@article{BS,
  title={Prisms and prismatic cohomology},
  author={Bhatt, Bhargav and Scholze, Peter},
  journal={Annals of Mathematics},
  volume={196},
  number={3},
  pages={1135--1275},
  year={2022},
  publisher={Department of Mathematics, Princeton University Princeton, New Jersey, USA}
}

@article{APC,
  title={Absolute prismatic cohomology},
  author={Bhatt, Bhargav and Lurie, Jacob},
  journal={arXiv:2201.06120},
  year={2022}
}

@article{APC2,
  title={The prismatization of $p$-adic formal schemes},
  author={Bhatt, Bhargav and Lurie, Jacob},
  journal={arXiv:2201.06124},
  year={2022}
}

@article{APC3,
  title={Prismatic $\F$-gauges},
  author={Bhatt, Bhargav},
  journal={Lecture notes available at https://www. math. ias. edu/\~{} bhatt/teaching/mat549f22/lectures. pdf},
  year={2022}
}

@article{clausen2017k,
  title={A K-theoretic approach to Artin maps},
  author={Clausen, Dustin},
  journal={arXiv:1703.07842},
  year={2017}
}

@article{CMM,
  title={K-theory and topological cyclic homology of henselian pairs},
  author={Clausen, Dustin and Mathew, Akhil and Morrow, Matthew},
  journal={Journal of the American Mathematical Society},
  volume={34},
  number={2},
  pages={411--473},
  year={2021}
}

@article{darrell2023mathrm,
  title={$\TR$ of quasiregular semiperfect rings is even},
  author={Darrell, Micah and Riggenbach, Noah},
  journal={arXiv:2308.13008},
  year={2023}
}

@article{devalapurkar2023p,
  title={$p$-typical curves on $p$-adic Tate twists and de Rham-Witt forms},
  author={Devalapurkar, Sanath K and Mondal, Shubhodip},
  journal={arXiv:2309.16623},
  year={2023}
}

@article{Prismatization,
  title={Prismatization},
  author={Drinfeld, Vladimir},
  journal={arXiv:2005.04746},
  year={2020}
}

@article{FarguesScholze,
  title={Geometrization of the local Langlands correspondence},
  author={Fargues, Laurent and Scholze, Peter},
  journal={arXiv:2102.13459},
  year={2021}
}

@article{GR,
  title={Crystals and D-modules},
  author={Gaitsgory, Dennis and Rozenblyum, Nick},
  journal={arXiv:1111.2087},
  year={2011}
}

@book{GL,
  title={Weil's Conjecture for Function Fields: Volume I (AMS-199)},
  author={Gaitsgory, Dennis and Lurie, Jacob},
  volume={199},
  year={2019},
  publisher={Princeton University Press}
}

@article{ToyShtuka,
  title={A toy model for the Drinfeld--Lafforgue shtuka construction},
  author={Gaitsgory, Dennis and Kazhdan, David and Rozenblyum, Nick and Varshavsky, Yakov},
  journal={Indagationes Mathematicae},
  volume={33},
  number={1},
  pages={39--189},
  year={2022},
  publisher={Elsevier}
}

@article{hesselholt2015big,
  title={The big de Rham--Witt complex},
  author={Hesselholt, Lars},
  journal={Acta Mathematica},
  volume={214},
  number={1},
  pages={135--207},
  year={2015},
  publisher={Springer}
}

@misc{hesselholt2005absolute,
  title={The absolute de Rham-Witt complex},
  author={Hesselholt, Lars},
  year={2005},
  publisher={MIT, Cambridge, Massachusetts, USA}
}

@inproceedings{hesselholt2004rham,
  title={On the de Rham-Witt complex in mixed characteristic},
  author={Hesselholt, Lars and Madsen, Ib},
  booktitle={Annales scientifiques de l'Ecole normale sup{\'e}rieure},
  volume={37},
  number={1},
  pages={1--43},
  year={2004}
}

@article{hesselholt2003k,
  title={On the $\K$-theory of local fields},
  author={Hesselholt, Lars and Madsen, Ib},
  journal={Annals of mathematics},
  volume={158},
  number={1},
  pages={1--113},
  year={2003},
  publisher={JSTOR}
}

@article{hesselholt1997k,
  title={On the K-theory of finite algebras over Witt vectors of perfect fields},
  author={Hesselholt, Lars and Madsen, Ib},
  journal={Topology},
  volume={36},
  number={1},
  pages={29--101},
  year={1997},
  publisher={Pergamon}
}

@article{hesselholt1996p,
  title={On the p-typical curves in Quillen's K-theory},
  author={Hesselholt, Lars},
  journal={Acta Mathematica},
  volume={177},
  pages={1--53},
  year={1996},
  publisher={Springer}
}

@article{hesselholt2019arbeitsgemeinschaft,
  title={Arbeitsgemeinschaft: Topological cyclic homology},
  author={Hesselholt, Lars and Scholze, Peter},
  journal={Oberwolfach Reports},
  volume={15},
  number={2},
  pages={805--940},
  year={2019}
}

@article{krause2018lectures,
  title={Lectures on topological Hochschild homology and cyclotomic spectra},
  author={Krause, Achim and Nikolaus, Thomas},
  journal={Author's website},
  year={2018}
}

@article{krause2023polygonic,
  title={Polygonic spectra and TR with coefficients},
  author={Krause, Achim and McCandless, Jonas and Nikolaus, Thomas},
  journal={arXiv:2302.07686},
  year={2023}
}

@article{LangerZink,
  title={De Rham--Witt cohomology for a proper and smooth morphism},
  author={Langer, Andreas and Zink, Thomas},
  journal={Journal of the Institute of Mathematics of Jussieu},
  volume={3},
  number={2},
  pages={231--314},
  year={2004},
  publisher={Cambridge University Press}
}

@article{lindenstrauss2012taylor,
  title={On the Taylor tower of relative K--theory},
  author={Lindenstrauss, Ayelet and McCarthy, Randy},
  journal={Geometry \& Topology},
  volume={16},
  number={2},
  pages={685--750},
  year={2012},
  publisher={Mathematical Sciences Publishers}
}

@book{HTT,
  title={Higher topos theory},
  author={Lurie, Jacob},
  year={2009},
  publisher={Princeton University Press}
}

@misc{HA,
  title={Higher algebra},
  author={Lurie, Jacob},
  year={2017}
}

@article{MK1,
  title={On $\K (1)$-local $\TR$},
  author={Mathew, Akhil},
  journal={Compositio Mathematica},
  volume={157},
  number={5},
  year={2021}
}

@article{mccandless2021curves,
  title={On curves in K-theory and TR},
  author={McCandless, Jonas},
  journal={arXiv: 2102.08281},
  year={2021}
}

@article{moulinos2022universal,
  title={A universal Hochschild--Kostant--Rosenberg theorem},
  author={Moulinos, Tasos and Robalo, Marco and To{\"e}n, Bertrand},
  journal={Geometry \& Topology},
  volume={26},
  number={2},
  pages={777--874},
  year={2022},
  publisher={Mathematical Sciences Publishers}
}

@article{NS,
  title={On topological cyclic homology},
  author={Nikolaus, Thomas and Scholze, Peter},
  journal={Acta Mathematica},
  volume={221},
  number={2},
  pages={203--409},
  year={2018},
  publisher={International Press}
}

@article{Nikolaustalk,
  title={Characteristic polynomials and $\TR$ with coefficients, video from a talk},
  author={Nikolaus, Thomas},
  journal={BIRS Workshop on Equivariant Stable Homotopy Theory and $p$-adic Hodge Theory},
  year={2020}
}

@article{Raksit,
  title={Hochschild homology and the derived de Rham complex revisited},
  author={Raksit, Arpon},
  journal={arXiv:2007.02576},
  year={2020}
}

@book{ScholzeWeinstein,
  title={Berkeley Lectures on p-adic Geometry:(AMS-207)},
  author={Scholze, Peter and Weinstein, Jared},
  year={2020},
  publisher={Princeton University Press}
}

@inproceedings{ScholzeICM,
  title={p-adic Geometry},
  author={Scholze, Peter},
  booktitle={Proceedings of the International Congress of Mathematicians: Rio de Janeiro 2018},
  pages={899--933},
  year={2018},
  organization={World Scientific}
}

@article{Zhu,
  title={An introduction to affine Grassmannians and the geometric Satake equivalence},
  author={Zhu, Xinwen},
  journal={arXiv:1603.05593},
  year={2016}
}

@article{zhu2017affine,
  title={Affine Grassmannians and the geometric Satake in mixed characteristic},
  author={Zhu, Xinwen},
  journal={Annals of Mathematics},
  volume={185},
  number={2},
  pages={403--492},
  year={2017},
  publisher={Department of Mathematics, Princeton University Princeton, New Jersey, USA}
}
\end{document}